\newtheorem{theorem}{Theorem}
\newtheorem{definition}[theorem]{Definition}
\newtheorem{proposition}[theorem]{Proposition}
\newtheorem{lemma}[theorem]{Lemma}
\newtheorem{corollary}[theorem]{Corollary}
\newtheorem{remark}[theorem]{Remark}
\newtheorem{Notation}[theorem]{Notation}
\newtheorem{Convention}[theorem]{Convention}
\newcommand{\nc}{\newcommand}
\nc{\R}{\mathbb{R}}
\nc{\C}{\mathbb{C}}
\nc{\mrm}{\mathrm}
\nc{\mL}{\mrm{L}}
\nc{\mF}{\mrm{F}}
\nc{\mC}{\mrm{C}}
\nc{\mH}{\mrm{H}}
\nc{\mW}{\mrm{W}}
\nc{\mV}{\mrm{V}}
\nc{\mM}{\mrm{M}}
\nc{\mK}{\mrm{K}}
\nc{\mD}{\mrm{D}}
\nc{\mB}{\mrm{B}}
\nc{\mR}{\mrm{R}}
\nc{\mX}{\mrm{X}}
\nc{\mY}{\mrm{Y}}
\nc{\mS}{\mrm{S}}
\nc{\Ec}{\mrm{E_c}}
\nc{\calL}{\mathcal{L}}
\nc{\loc}{\mrm{loc}}
\nc{\comp}{c}
\nc{\supp}{\mrm{supp}}
\nc{\Hardy}{\mathfrak{H}}
\nc{\calH}{\mathcal{H}}
\nc{\ctru}{\mathfrak{u}}
\nc{\ctrv}{\mathfrak{v}}
\nc{\bc}{\boldsymbol{c}}
\nc{\be}{\boldsymbol{e}}
\nc{\br}{\boldsymbol{r}}
\nc{\bs}{\boldsymbol{s}}
\nc{\bt}{\boldsymbol{t}}
\nc{\bw}{\boldsymbol{w}}
\nc{\bx}{\boldsymbol{x}}
\nc{\by}{\boldsymbol{y}}
\nc{\bz}{\boldsymbol{z}}
\nc{\lbr}{\lbrack}
\nc{\rbr}{\rbrack}
\nc{\dsp}{\displaystyle}
\nc{\vphi}{\varphi}
\begin{document}
	\title[Analysis of the Coupled Cluster Method]{Analysis of the Single Reference Coupled Cluster Method for Electronic Structure Calculations: The Full-Coupled Cluster Equations}
	
	\author{Muhammad Hassan}\address{(M. Hassan and Y. Wang) Sorbonne Université, CNRS, Université Paris Cité, Laboratoire Jacques-Louis Lions (LJLL), F-75005 Paris, France.}
	\author{Yvon Maday}\address{(Y. Maday) Sorbonne Université, CNRS, Université Paris Cité, Laboratoire Jacques-Louis Lions (LJLL), F-75005 Paris, France and Institut Universitaire de France,Paris, France.}
	\author{Yipeng Wang} 

	\begin{abstract} 
		The central problem in electronic structure theory is the computation of the eigenvalues of the electronic Hamiltonian-- an unbounded, self-adjoint operator acting on a Hilbert space of antisymmetric functions. Coupled cluster (CC) methods, which are based on a non-linear parameterisation of the sought-after eigenfunction  and result in non-linear systems of equations, are the method of choice for high accuracy quantum chemical simulations but their numerical analysis is underdeveloped. The existing numerical analysis relies on a \emph{local, strong monotonicity} property of the CC function that is valid only in a perturbative regime, i.e., when the sought-after \emph{ground state} CC solution is sufficiently close to zero. In this article, we introduce a new well-posedness analysis for the single reference coupled cluster method based on the invertibility of the CC derivative. Under the minimal assumption that the sought-after eigenfunction is intermediately normalisable and the associated eigenvalue is isolated and non-degenerate, we prove that the continuous (infinite-dimensional) CC equations are \emph{always} locally well-posed. Under the same minimal assumptions and provided that the discretisation is fine enough, we prove that the discrete Full-CC equations are locally well-posed, and we derive residual-based error estimates with \emph{guaranteed positive} constants. Preliminary numerical experiments indicate that the constants that appear in our estimates are a significant improvement over those obtained from the local monotonicity approach. 
	\end{abstract}
	\subjclass{65N25, 65N30, 65Z05,  81V55, 81V70}
	\keywords{Electronic structure theory, coupled cluster method, numerical analysis, non-linear functions, error estimates}
	\maketitle

	\section{Introduction}
	
	Quantum computational chemistry is by now widely regarded as one of the central pillars of modern chemistry, as evidenced by the award of two Nobel prizes (Walter Kohn and John Pople (1998) \cite{Nobel_98}; Martin Karplus, Michael Levitt, and Arieh Warschel (2013) \cite{Nobel_13}) in recent years. The field is typically thought to have begun with the pioneering work of Walter Heitler and Fritz London \cite{heitler1927wechselwirkung} in the 1920s but major, concurrent advances were due to Vladimir Fock, Douglas Hartree, Egil Hylleraas and John Slater \cite{hartree1928wave, fock1930naherungsmethode, slater1930note, slater1930atomic, hylleraas1928grundzustand, hylleraas1929neue} among others. These first developments were followed by seminal contributions in the post-war period by the likes of Francis Boys \cite{boys1950electronic},  Ji{\v{r}}{\'\i}  {\v{C}}{\'\i}{\v{z}}ek \cite{vcivzek1966correlation}, George Hall \cite{hall1951molecular}, Clemens Roothan \cite{roothaan1951new, roothaan1960self} and many others (see, for instance, \cite[Chapter 1.2]{MR2008386} for a more comprehensive account). The subsequent explosion in available computing resources which began in the 1970s helped spur a tremendous development in the field (see, e.g., the development of computer software such as POLYATOM \cite{barnett1963mechanized}, IBMOL \cite{clementi1966electronic}, and GAUSSIAN 70 which is still used today \cite{g16}), and quantum-chemical simulations are today routinely performed by thousands of researchers, complementing painstaking laboratory work on the design of new compounds for {\emph{sustainable energy}}, {\emph{green catalysis}}, and {\emph{pharmaceutical drugs}} (see, e.g., \cite{dieterich2017opinion, lovelock2022road, manly2001impact, deglmann2015application, hillisch2015computational} and the references therein). Indeed, according to the 2021 annual report of the European High Performance Computing Joint Undertaking (EuroHPC-JU), nearly a quarter of of the simulations running on the supercomputers Karolina and Vega pertained to chemical and material science simulations, with similar or higher numbers reported by supercomputing centers in Germany \cite{Germany}, Italy \cite{Italy}, and Switzerland \cite{CSCS}.

The goal of quantum chemistry is to obtain a quantitative description of the behaviour of matter at the atomic scale, i.e., when matter is viewed as a collection of nuclei and electrons. In the so-called non-relativistic Born-Oppenheimer setting, the nuclei of the molecule under study are treated as clamped, point-like particles, and the aim is to describe the evolution of the electrons in the effective electrostatic potential generated by the static configuration of positively charged nuclei, this field of study being known as electronic structure theory. The behaviour of the electrons in this situation is governed by the spectrum of the so-called electronic Hamiltonian-- an unbounded, self-adjoint operator acting on a Hilbert space of anti-symmetric functions. It has been known since the seminal work of Grigorii Zhislin and Aleksandr Sigalov \cite{zhislin1960study, zhislin1965spectrum} that for neutral molecules and positively charged ions, the electronic Hamiltonian possesses a lowest eigenvalue, frequently called the ground state energy, and a majority of quantum chemical simulations are concerned with approximating this ground state energy.

The primary difficulty in the numerical computation of the lowest eigenvalue of the electronic Hamiltonian is the extremely high-dimensionality of the underlying Hilbert space. Indeed, for a system containing $N$ electrons, the sought-after ground state eigenfunction of the electronic Hamiltonian depends on $3N$ spatial variables. A naive application of traditional numerical methods such as finite element approximations or spectral schemes, etc., therefore fails spectacularly, and specialised approximation strategies have to be developed.  Broadly speaking, ab initio (first principles-based) deterministic numerical methods for approximating the ground state energy can be divided into three categories, each of which has a vast variety of subcategories and flavours (see, e.g., \cite[Chapter 1]{MR2008386} for a concise but comprehensive overview).

\begin{itemize}
	\item Wave-function methods which focus on approximating directly the ground state eigenfunction of the electronic Hamiltonian. 
	\item Density functional methods which are based on a reformulation of the minimisation problem for the electronic Hamiltonian (which acts on functions of $3N$ spatial variables) in terms of an equivalent minimisation problem over a set of \emph{electronic densities} (which are functions of $3$ spatial variables).
	\item Reduced density matrix approaches which are based on the electronic one-body and two-body reduced density matrices.
\end{itemize}

The coupled cluster (CC) methodology, which belongs to the class of wave-function methods, is based on a non-linear ansatz for the sought-after ground state eigenfunction of the electronic Hamiltonian. In its most common form-- the so-called single reference CC method-- the unknown ground state eigenfunction is expressed as the action of an \emph{exponential} cluster operator, i.e., the operator exponential of a linear combination of linear maps (so-called \emph{excitation operators}), acting on a judiciously chosen reference function (usually a so-called discrete Hartree-Fock determinant). Using this ansatz the eigenvalue problem for the ground state energy of the electronic Hamiltonian can be reformulated as a \emph{non-linear} system of equations for the unknown coefficients appearing in the linear combination of excitation operators entering the operator exponential. Approximations to the ground state energy are then obtained by restricting the class of excitation operators that appear inside the exponential, which leads to a hierarchy of computationally more tractable non-linear, root-finding problems. Usually these truncations are done on the basis of the excitation orders (see Section \ref{sec:3} below) and one thus speaks of CCD (double excitation operators only), CCSD (single and double excitation operators), CCSDT (single, double and triple excitation operators) and so on. We emphasise that these methods are not variational since the sought-after restricted coupled cluster wave-function is no longer the critical point of an energy functional.

Coupled cluster methods were originally introduced in the field of nuclear physics in the late 1950s by Fritz Coester and Hermann Kummel \cite{coester1958bound, coester1960short} but were reformulated for use in quantum chemistry in the following decades by pioneers such as Ji{\v{r}}{\'\i}  {\v{C}}{\'\i}{\v{z}}ek \cite{vcivzek1966correlation}, Josef Paldus \cite{paldus1972correlation}, and Oktay Sinano{\u{g}}lu \cite{sinanouglu1962many}. The original motivation for introducing such methods was the fact that they were size consistent: the approximate coupled cluster ground state energy of a molecular system composed of two independent sub-systems can be shown to be the sum of the individual approximate coupled cluster energies of the two sub-systems. Since size consistency seems to be a vital chemical property not conserved by some other numerical methods, and in practice, the CC methods seem to work extremely well, achieving, in many cases, the chemical accuracy of 1 kcal/mol, they quickly found wide adoption in the quantum chemical community \cite{lee1995achieving}. In particular, the so-called CCSD(T)\footnote{Here, the (T) emphasises the fact that triple excitation orders are not initially included in the CCSD(T) ansatz and are rather treated perturbatively through a post-processing step.}  variant, which can be applied to small and medium-sized molecules at a reasonable computational cost, is widely regarded as the `gold standard' of quantum chemistry~\cite{raghavachari1989fifth}.

Despite the ubiquitous use of this `gold standard' computational method in the quantum chemical community, there is a shockingly limited amount of mathematical literature on the numerical analysis of the coupled cluster methodology. Indeed, a simple search with the keyword ``coupled cluster'' on Google Scholar and MathSciNet, two databases that are representative of the scientific literature as a whole and the subset of mathematical literature thereof, reveals that, as of January 2023, there are more than 100,000 articles pertaining to coupled cluster theory of which less than 40 are listed on MathSciNet. Limiting ourselves to the subset of numerical analysis journals, there are a total seven articles on coupled cluster methods.

The first systematic study of the single reference coupled cluster method from a numerical analysis perspective was undertaken by Reinhold Schneider and Thorsten Rohwedder slightly more than ten years ago. In a series of three remarkable papers \cite{Schneider_1, MR3021693, MR3110488}, they were able to show that the excitation operators that appear inside the coupled cluster exponential are bounded linear maps between  Hilbert spaces of anti-symmetric functions with appropriate regularity and that consequently, the continuous (infinite-dimensional) coupled cluster equations could be given a precise functional-analytic meaning. The coupled cluster approximations (built by restricting the class of excitation operators that enter the exponential operator) could thus be viewed as classical Galerkin discretisations of an infinite-dimensional non-linear problem. Schneider also showed that under some assumptions, the underlying non-linear coupled cluster function is \emph{locally, strongly monotone} which could be exploited to prove the local well-posedness of both the continuous CC equations and its Galerkin discretisations. Schneider and Rohwedder also derived optimal error estimates for the coupled cluster energies using the dual-weighted residual approach of Rolf Rannacher and co-workers \cite{soner2003adaptive}. Since this pioneering work, two further contributions have been published which provide a similar numerical analysis for two other flavours of coupled cluster methods, namely, the \emph{extended} coupled cluster method~\cite{laestadius2018analysis} and the \emph{tailored} coupled cluster method~\cite{faulstich2019analysis} (see also \cite{laestadius2019coupled}). In addition to the aforementioned contributions which tackle the coupled cluster equations from a functional analysis perspective, there has been recent interest in analysing the CC equations using tools from other fields. Thus, the contribution \cite{csirik2021coupled_1} uses concepts from graph theory to present a unified framework for constructing different variants of coupled cluster methods and the follow-up contribution \cite{csirik2021coupled_2} uses topological index theory to study the solutions of the coupled cluster equations in finite-dimensions. Recently, an additional contribution has appeared which investigates the root structure of the CC equations using tools from algebraic geometry \cite{faulstich2022coupled}.

While the articles \cite{Schneider_1, MR3021693, MR3110488, laestadius2018analysis, faulstich2019analysis} listed above lay the groundwork for a rigorous a priori error analysis of the coupled cluster methods, they have one rather unfortunate drawback: in all cases, the well-posedness of the CC equations is established by demonstrating that the underlying CC function is locally strongly monotone, and this demonstration can only be shown to hold if the targeted root~$\bt^*$ of the CC function is sufficiently close to zero. In other words, the local well-posedness analysis and the resulting error estimates only hold in a perturbative regime~$\bt^* \approx 0$. On the other hand, as we discuss in more detail in Remark \ref{rem:monotone} in Section \ref{sec:5}, in many practical situations where the CC method is \emph{known numerically} to yield accurate approximations, the sought-after root $\bt^*$ is \emph{not} in the perturbative regime. For such problems, the existing a priori analysis yields estimates with \emph{negative} constants! The a priori analysis having failed, there is also no hope of developing \emph{a posteriori} error estimates for practical coupled cluster simulations which, in our opinion, would be the ultimate goal of the numerical analysis.

The aim of the current contribution is to develop a new \emph{a priori} error analysis for the single reference coupled cluster equations that is valid under more general conditions. The analysis we present here-- motivated by the existing literature on non-linear numerical analysis (see, for instance, \cite{MR1470227, MR1213837})-- is based on the invertibility of the Fr\'echet derivative of the non-linear coupled cluster function, which is established using a classical inf-sup-type approach. In contrast to the local, strong monotonicity approach pioneered by Schneider, our analysis does not require the sought-after root $\bt^*$ of the coupled cluster function to be close to zero. In this article, we will focus on the continuous (infinite-dimensional) CC equations and a specific version of the discrete CC equations, namely, the Full-CC equations in a finite basis (see Section \ref{sec:6}). The extension of our analysis to more general discretisations (the so-called \emph{truncated} CC equations \cite[Chapter 13]{helgaker2014molecular}) will be addressed in a forthcoming contribution.

The remainder of this article is organised as follows. In Section \ref{sec:2}, we introduce more rigorously than in this introduction, the problem formulation, i.e., the electronic Hamiltonian and the Hilbert spaces on which it acts. In Section~\ref{sec:4}, we introduce excitation operators and the coupled cluster ansatz, and we state the continuous and discrete coupled cluster equations. We begin our analysis proper in Section \ref{sec:5} where we prove, under the minimal assumptions that the sought-after eigenfunction is intermediately normalisable and the associated eigenvalue is non-degenerate, that the continuous (infinite-dimensional) CC equations are \emph{always} locally well-posed. In Section \ref{sec:6}, we analyse a specific discretisation of the CC equations, namely, the Full-CC equations in a finite basis. We prove under the same minimal assumptions of eigenpair non-degeneracy and CC ansatz validity that these equations are locally well-posed provided that the discretisation is fine enough, and we derive residual-based error estimates with \emph{guaranteed positive} constants. Preliminary numerical experiments indicate that the constants that appear in our estimates are a significant improvement over those obtained from the local monotonicity approach. 


	\section{Problem Formulation and Setting}\label{sec:2}~

Computational quantum chemistry is the study of the properties of matter through modelling at the molecular scale, i.e., when matter is viewed as a collection of positively charged nuclei and negatively charged electrons. To formalise the problem setting, we assume that we are given a molecule composed of $M \in \mathbb{N}$ nuclei carrying charges $\{Z_{\alpha}\}_{\alpha =1}^M \subset \mathbb{R}_+$ and located at positions $\{\bold{x}_{\alpha}\}_{\alpha =1}^{M} \subset \mathbb{R}^3$ respectively. We further assume the presence of $N\in \mathbb{N}$ electrons whose spatial coordinates are denoted by $\{\bold{x}_i\}_{i=1}^N \subset \mathbb{R}^3$. Throughout this article, we will assume that the Born-Oppenheimer approximation holds, i.e., we will treat the nuclei as fixed, classical particles and we will focus purely on the quantum mechanical description of the electrons.

In order to describe the behaviour of this system of nuclei and electrons under the Born-Oppenheimer approximation, we require the notion of several functions spaces. The following construction is partially based on \cite{rohwedder2010analysis}.

\subsection{Function Spaces and Norms}\label{sec:2a}~

To begin with, we denote by $\mL^2(\R^3; \C)$ the space of complex-valued square integrable functions of three variables, and we denote by $\mH^1(\R^3; \C)$ the closed subspace of $\mL^2(\R^3; \C)$ consisting of functions that additionally possess square integrable first derivatives. Both spaces are equipped with their usual inner products, and we adopt the convention that both inner products are anti-linear in their first argument. Following the convention in the quantum chemical literature, we will frequently refer to $\mL^2(\R^3; \C)$ and $\mH^1(\R^3; \C)$ as infinite-dimensional \emph{single particle} spaces.

Next, we define the tensor space\footnote{We remind the reader that topological tensor spaces can be defined by taking an orthonormal basis of the underlying single particle spaces (in this case $\mL^2(\R^3; \C)$) and using it to construct the algebraic tensor product vector space. This vector space is equipped with a tensorial inner product inherited from the single particle function spaces (in this case the inner product is given by Equation \eqref{eq:inner_product}). The topological tensor product space is then obtained by taking the completion of the algebraic tensor product vector space with respect to the norm induced by the tensorial inner product.}
\begin{align*}
	\mathcal{L}^2:= \bigotimes_{j=1}^N \mL^2(\R^3; \C),
\end{align*}
which is equipped with an inner product that is constructed by defining first for all elementary tensors $ \mathcal{f}, \mathcal{g} \in \mathcal{L}^2$ with  $\mathcal{f}= \otimes_{j=1}^N \mathcal{f}_j$ and $\mathcal{g}= \otimes_{j=1}^N \mathcal{g}_j$
\begin{equation}\label{eq:inner_product}
	\begin{split}
		\left(\mathcal{f}, \mathcal{g}\right)_{\mathcal{L}^2}:= \prod_{j=1}^N \left(\mathcal{f}_j, \mathcal{g}_j\right)_{\mL^2(\R^3; \C)},
	\end{split}
\end{equation}
and then extending bilinearly for general tensorial elements of $\mathcal{L}^2$. 

It is a consequence of Fubini's theorem that the tensor space $\mathcal{L}^2$ is isometrically isomorphic to the space $\mL^2(\R^{3N}; \C)$ of complex-valued square integrable functions of $3N$ variables with the associated $\mL^2$-inner product that is anti-linear in the first argument. Thanks to this result, we can define the tensor space $\mathcal{H}^1 \subset \mathcal{L}^2 $ as the closure of $\mathscr{C}_0^{\infty}(\mathbb{R}^{3N}; \C)$ in $\mL^2(\R^{3N}; \C)$ with respect to the usual gradient-gradient inner product on $\mathbb{R}^{3N}$.

In quantum mechanics, a fundamental distinction is made between so-called \emph{bosonic} and \emph{fermionic} particles, the latter obeying the so-called Pauli-exclusion principle and thus being described in terms of antisymmetric functions. We are therefore obligated to also define tensor spaces of antisymmetric functions. To this end, we first introduce the so-called \emph{antisymmetric projection operator} $\mathbb{P}^{\rm as} \colon \mathcal{L}^2\rightarrow\mathcal{L}^2$ that is defined through the action
\begin{align*}
	\forall \mathcal{f} \in \mathcal{L}^2\colon \quad (\mathbb{P}^{\rm as} \mathcal{f})(\bold{x}_1, \ldots, \bold{x}_N) :=\frac{1}{\sqrt{N!}} \sum_{\pi \in {\mS}(N)} (-1)^{\rm sgn(\pi)} \mathcal{f}(\bold{x}_{\pi(1)}, \ldots,\bold{x}_{\pi(N)}  ),
\end{align*}
where ${\mS}(N)$ denotes the permutation group of order $N$, and ${\rm sgn(\pi)} $ denotes the signature of the permutation~$\pi~\in~{\mS}(N)$. 

It is easy to establish that $\mathbb{P}^{\rm as}$ is an $\mathcal{L}^2$-orthogonal projection with a closed range. We therefore define the antisymmetric tensor spaces $\widehat{\mathcal{L}}^2 \subset \mathcal{L}^2$ and $	\widehat{\mathcal{H}}^1 \subset \mathcal{H}^1$ as
\begin{align*}
	\widehat{\mathcal{L}}^2:= \bigwedge_{j=1}^N \mL^2(\R^3; \C) := \text{\rm ran}\; \mathbb{P}^{\rm as} \quad \text{and} \quad \widehat{\mathcal{H}}^1 := \widehat{\mathcal{L}}^2 \cap \mathcal{H}^1,
\end{align*}
equipped with the $(\cdot, \cdot)_{\mathcal{L}^2}$ and $(\cdot, \cdot)_{\mathcal{H}^1}$ inner products respectively. We remark that normalised elements of $\widehat{\mathcal{L}}^2$ are known as \emph{wave-functions}, and these are antisymmetric in the sense that for any $\mathcal{f} \in	\widehat{\mathcal{L}}^2 $ we have that
\begin{align*}
	\mathcal{f}(\bold{x}_1, \ldots, \bold{x}_i, \ldots, \bold{x}_j, \ldots \bold{x}_N)= - \mathcal{f}(\bold{x}_1, \ldots, \bold{x}_j, \ldots, \bold{x}_i, \ldots \bold{x}_N) \qquad \forall~ i, j \in \{1, \ldots, N\} \text{ with } i\neq j.
\end{align*}

In the sequel, we will also occasionally make use of the dual space of $\widehat{\mathcal{H}}^1$. We therefore denote $\widehat{\mathcal{H}}^{-1}:= \big(\widehat{\mathcal{H}}^1\big)^*$, we equip $\widehat{\mathcal{H}}^{-1} $ with the canonical dual norm, and we write $\langle \cdot, \cdot \rangle_{\widehat{\mathcal{H}}^1, \widehat{\mathcal{H}}^{-1}} $ for the associated duality pairing. Finally, let us remark that the higher regularity, anti-symmetric tensor space $\widehat{\mathcal{H}}^2$ is defined analogously to $\widehat{\mathcal{H}}^1$.

Next, let us comment on the construction of basis sets for the tensor spaces $\mathcal{H}^1$ and $\widehat{\mathcal{H}}^1$. Given an $\mL^2$-orthonormal, complete basis $\mathcal{B}:=\{\phi_k\}_{k \in \mathbb{N}} \subset \mH^1(\mathbb{R}^3; \C)$, we can construct a complete basis $\mathcal{B}_{\otimes}$ for $\mathcal{H}^1$ by setting
\begin{align*}
	\mathcal{B}_{\otimes}= \Big\{\phi_{k_1} \otimes \phi_{k_2} \otimes \ldots \otimes \phi_{k_N} \colon ~k_1, k_2, \ldots, k_N \in \mathbb{N}\Big\},
\end{align*}
and it follows immediately that $\mathcal{B}_{\otimes}$ is $\mathcal{L}^2$-orthonormal. 

In order to construct a basis for the antisymmetric tensor space $\widehat{\mathcal{H}}^1$, we must first define a suitable subset of $\mathcal{B}_{\otimes}$. To this end, we introduce an index set $\mathcal{J}_{\infty}^N \subset \mathbb{N}^N$ given by
\begin{align*}
	\mathcal{J}_{\infty}^N := \Big\{\boldsymbol{\ell}= (\ell_1, \ell_2, \ldots, \ell_N)\in \mathbb{N}^N \colon \ell_1 < \ell_2 < \ldots < \ell_N\Big\}.
\end{align*}

We can thus define the subset $\mathcal{B}_{\otimes}^{\rm ord} $ of the basis $\mathcal{B}_{\otimes}$ given by
\begin{align*}
	\mathcal{B}_{\otimes}^{\rm ord}:= \Big\{\Phi_{\bold{k}}:= \phi_{k_1} \otimes \phi_{k_2} \otimes \ldots \otimes \phi_{k_N} \colon \bold{k}= (k_1, \ldots, k_N) \in \mathcal{J}_{\infty}^N \Big\}.
\end{align*}

A complete basis for the antisymmetric tensor space $\widehat{\mathcal{H}}^1$ is then given by
\begin{align*}
	\mathcal{B}_{\wedge}:=& \{ \mathbb{P}^{\rm as} \Phi \colon \Phi \in \mathcal{B}_{\otimes}^{\rm ord}\}\\
	=& \left\{ \Phi_{\bold{k}}(\bold{x}_1, \bold{x}_2, \ldots, \bold{x}_N)=\frac{1}{\sqrt{N!}} \sum_{\pi \in {\mS}(N)} (-1)^{\rm sgn(\pi)} \otimes_{i=1}^N \phi_{k_i}\big(\bold{x}_{\pi(i)}\big) \colon \hspace{1mm} \bold{k}=(k_1, k_2, \ldots, k_N) \in	\mathcal{J}_{\infty}^N\right\}.
\end{align*}

Elements of the basis set $\mathcal{B}_{\wedge}$ are called Slater determinants. For simplicity, given $\bold{k} \in \mathcal{J}_{\infty}^N$ and $\Phi_{\bold{k}} \in \mathcal{B}_{\wedge}$ of the~form
\begin{align*}
	\Phi_{\bold{k}}(\bold{x}_1, \bold{x}_2, \ldots, \bold{x}_N)=\frac{1}{\sqrt{N!}} \sum_{\pi \in {\mS}(N)} (-1)^{\rm sgn(\pi)} \otimes_{i=1}^N \phi_{k_i}\big(\bold{x}_{\pi(i)}\big),
\end{align*}
we will write $\Phi_{\bold{k}}$ in the succinct form 
\begin{align*}
	\Phi_{\bold{k}}(\bold{x}_1, \bold{x}_2, \ldots, \bold{x}_N)= \frac{1}{\sqrt{N!}}\text{\rm det} \big(\phi_{k_i}(\bold{x}_j)\big)_{i, j=1}^N.
\end{align*}

\subsection{Governing Operators and Problem Statement}~

Throughout this article, we assume that the electronic properties of the molecule that we study can be described by the action of a spinless\footnote{The presence or absence of spin has no bearing on the analysis that we present in this article. We choose to ignore spin simply to avoid unnecessary notational complexity and this choice is fully classical for such analyses.}, many-body electronic Hamiltonian given by
\begin{equation}\label{eq:Hamiltonian}
	H:= -\frac{1}{2} \sum_{j=1}^N \Delta_{\bold{x}_j} + \sum_{j =1}^{N} \sum_{\alpha =1}^{M} \frac{-Z_{\alpha}}{\vert \bold{x}_{\alpha}- \bold{x}_j\vert }  + \sum_{j =1}^{N} \sum_{i =1}^{j-1}  \frac{1}{\vert \bold{x}_i - \bold{x}_j\vert}\qquad \text{acting on } \widehat{\mathcal{L}}^2 \quad \text{with domain } \widehat{\mathcal{H}}^2.
\end{equation}

The electronic properties of the molecule that we study are functions of the spectrum of the electronic Hamiltonian~$H$, and we are therefore interested in its analysis and computation. Of particular importance (and the primary focus of the current contribution) is the lowest eigenpair(s) $\left(\mathcal{E}_{\rm GS}^*, \Psi_{\rm GS}^* \right) \in \mathbb{R} \times \widehat{\mathcal{L}}^2$-frequently called the ground state energy and ground state wave function(s) respectively-, which are defined as 
\begin{subequations}
	\begin{align}
		\label{eq:Ground_State}
		\mathcal{E}_{\rm GS}^*:= \min_{0\neq \Psi \in \widehat{\mathcal{H}}^1} \frac{\big\langle\Psi,H \Psi\big\rangle_{\widehat{\mathcal{H}}^1 \times \widehat{\mathcal{H}}^{-1}}}{\Vert \Psi\Vert^2_{\mathcal{L}^2}} \quad \text{and} \quad \Psi_{\rm GS}^*:=& \underset{0\neq \Psi \in \widehat{\mathcal{H}}^1}{\text{arg~min}} \frac{\big\langle\Psi, H \Psi\big\rangle_{\widehat{\mathcal{H}}^1 \times \widehat{\mathcal{H}}^{-1}}}{\Vert \Psi\Vert^2_{\mathcal{L}^2}} \quad \text{ subject to } \Vert \Psi^*_{\rm GS}\Vert_{\mathcal{L}^2}^2=1,\\ 
		\intertext{and which obviously satisfy}
		H\Psi_{\rm GS}^* =& \mathcal{E}^*_{\rm GS} \Psi^*_{\rm GS}. 
	\end{align}
\end{subequations}

It is a classical result (see, e.g., the review article \cite{MR1768629}) that the operator $H$ is self-adjoint on $\widehat{\mathcal{L}}^2$ with form domain~$\widehat{\mathcal{H}}^1$, and under the additional assumption that $Z:= \sum_{\alpha =1}^M Z_{\alpha} \geq N$, it holds that
\begin{enumerate}
	\item The operator $H$ has an essential spectrum $\sigma_{\rm ess}$ of the form $\sigma_{\rm ess}:= [\Sigma, \infty)$ where $-\infty<\Sigma \leq 0$;
	\item The operator $H$ has a bounded-below discrete spectrum that consists of a countably infinite number of eigenvalues, each with finite multiplicity, accumulating at $\Sigma$.
\end{enumerate}

The existence of the ground state energy $\mathcal{E}^*_{\rm GS}$ is therefore guaranteed as soon as $\sum_{\alpha =1}^M Z_{\alpha} \geq N$, and in this article, we assume that this is always the case.

From a functional analysis point of view, the electronic Hamiltonian $H$ possesses certain desirable properties, namely continuity and ellipticity on appropriate Sobolev spaces. More precisely (see, for instance, \cite[Chapter 4]{yserentant2010electronic}),
\begin{itemize}
	\item The electronic Hamiltonian defined through Equation \eqref{eq:Hamiltonian} is bounded as a mapping from $\widehat{\mathcal{H}}^1$ to $\widehat{\mathcal{H}}^{-1}$:
	\begin{align}\label{eq:contin}
		\forall \Phi, \Psi \in \widehat{\mathcal{H}}^1 \colon \qquad \left \vert \left \langle \Phi, H\Psi\right \rangle_{\widehat{\mathcal{H}}^1 \times \widehat{\mathcal{H}}^{-1}} \right \vert \leq \Big(\frac{1}{2}+ 3\sqrt{N}Z\Big)\Vert \Phi\Vert_{\widehat{\mathcal{H}}^1}\Vert \Psi\Vert_{\widehat{\mathcal{H}}^1};
	\end{align}
	
	\item The electronic Hamiltonian defined through Equation \eqref{eq:Hamiltonian} satisfies the following ellipticity condition on the Gelfand triple $\widehat{\mathcal{H}}^1 \hookrightarrow \widehat{\mathcal{L}}^2 \hookrightarrow \widehat{\mathcal{H}}^{-1}$:
	\begin{align}\label{eq:ellip}
		\forall \Phi \in \widehat{\mathcal{H}}^1 \colon \qquad \left \langle \Phi, H\Phi\right \rangle_{\widehat{\mathcal{H}}^1 \times \widehat{\mathcal{H}}^{-1}}  \geq \frac{1}{4}\Vert \Phi\Vert_{\widehat{\mathcal{H}}^1}^2 - \Big(9NZ^2 -\frac{1}{4}\Big)\Vert \Phi\Vert^2_{\widehat{\mathcal{L}}^2}.
	\end{align}
\end{itemize}

An important consequence of the above ellipticity estimate is that the electronic Hamiltonian, modified by any suitable shift, defines an invertible operator on a subspace of $\widehat{\mathcal{H}}^1$. This fact will be of great importance in our analysis and will be the subject of further discussion in Section \ref{sec:5} (see in particular Remark \ref{rem:interp_const}).

\subsection{Computing the Ground State Energy in a Finite-Dimensional Subspace}\label{sec:3}~

From a practical point of view, the ground state energy of the electronic Hamiltonian defined through Equation \eqref{eq:Hamiltonian} can only be approximated in a finite-dimensional subspace. The most common such approach is known in the quantum chemical literature as \emph{Full Configuration Interaction}. In this subsection, we introduce the terminology and briefly discuss the methodology of the full configuration interaction procedure since the underlying notions will be useful when we discuss the discrete coupled cluster equations in Section \ref{sec:6}

At its core, the full configuration interaction method (Full-CI) is based on a straightforward Galerkin approximation of the minimisation problem \eqref{eq:Ground_State}. We will therefore begin by defining an approximation space. To do so, we fix some $K \in \mathbb{N}$ with $K > N$ and assume that we are given a set $\{\phi_j\}_{j=1}^K \subset \mH^1(\R^3; \C)$ of $\mL^2(\R^3;\C)$-orthonormal functions.  We also introduce an index set $\mathcal{J}_{K}^N \subset \{1, \ldots, K\}^N$ given by
\begin{align*}
	\mathcal{J}_{K}^N := \Big\{\boldsymbol{\ell} = (\ell_1, \ell_2, \ldots, \ell_N)\in \{1, \ldots, K\}^N \colon \ell_1 < \ell_2 < \ldots < \ell_N\Big\}.
\end{align*}

\begin{definition}[Finite Dimensional Single-Particle Basis]\label{def:single_basis}~
	
	We define the $K$-dimensional single particle basis $ \mathcal{B}_K \subset \mH^1(\R^3; \C)$ as $\mathcal{B}_K:= \{\phi_j\}_{j=1}^K $. Additionally, we define the subspace spanned by this basis set as $\mX_K := \text{\rm span}\; \mathcal{B}_K$ and we refer to $\mX_K$ as the single particle approximation space.
\end{definition}

\begin{definition}[Finite Dimensional $N$-Particle Basis]\label{def:n_basis}~
	
	We define the $\mathcal{L}^2$-orthonormal, ${K}\choose{N}$-dimensional $N$-particle basis $ \mathcal{B}_K^{N} \subset \widehat{\mathcal{H}}^1$ as
	\begin{align*}
		\mathcal{B}^{N}_K := \left\{ \Phi_{\bold{k}}(\bold{x}_1, \bold{x}_2, \ldots, \bold{x}_N)=\frac{1}{\sqrt{N!}}\text{\rm det} \big(\phi_{k_i}(\bold{x}_j)\big)_{i, j=1}^N \colon \hspace{1mm} \bold{k}=(k_1, k_2, \ldots, k_N) \in \mathcal{J}_{K}^N \right\}.
	\end{align*} 
	Additionally, we define the subspace spanned by this basis set as $\mathcal{V}_K := \text{\rm span}\; \mathcal{B}^N_K$ and we refer to $\mathcal{V}_K$ as the $N$-particle approximation space.
\end{definition}

\textbf{Full Configuration Interaction Approximation of Minimisation Problem \eqref{eq:Ground_State}}~

Let the $N$-particle approximation space $\mathcal{V}_K$ be defined through Definition \ref{def:n_basis}. We seek the pair(s) $(\mathcal{E}^*_{\rm FCI}, \Psi^*_{\rm FCI}) \in \big(\mathbb{R}, \mathcal{V}_K\big)$ that satisfies
\begin{equation}\label{eq:FCI}
	\mathcal{E}_{\rm FCI}^*:= \min_{0\neq \Psi\in \mathcal{V}_K} \frac{\left \langle \Psi, H\Psi\right \rangle_{\widehat{\mathcal{H}}^1 \times \widehat{\mathcal{H}}^{-1}}}{\Vert \Psi\Vert^2_{\mathcal{L}^2}} \quad \text{and} \quad \Psi_{\rm FCI}^*:= \underset{0\neq \Psi \in \mathcal{V}_K}{\text{arg~min}} \frac{\left \langle \Psi, H\Psi\right \rangle_{\widehat{\mathcal{H}}^1 \times \widehat{\mathcal{H}}^{-1}}}{\Vert \Psi\Vert^2_{\mathcal{L}^2}}  ~ \text{ subject to }~ \Vert \Psi_{\rm FCI}^*\Vert_{\mathcal{L}^2}^2=1.
\end{equation}

Several remarks are now in order.

First, it follows from the variational principle that the minimum in Equation \eqref{eq:FCI} satisfies $\mathcal{E}_{\rm FCI}^* \geq \mathcal{E}^*_{\rm GS}$. 

Second, in practice the Full-CI minimisation problem \eqref{eq:FCI} is very often solved by writing first the associated Euler-Lagrange equations, i.e., the first order optimality conditions. This yields a linear eigenvalue problem on the finite-dimensional space $\mathcal{V}_K$ which can, in principle, be solved through the use of some iterative eigenvalue solver.

Third, despite the fact that the Full-CI methodology \eqref{eq:FCI} seems very amenable to numerical analysis- by virtue of being a Galerkin approximation to the exact minimisation problem \eqref{eq:Ground_State}- it has a fundamental computational draw-back: the dimension of the $N$-particle approximation space $\mathcal{V}_K$ grows combinatorially in ${N}$ which renders this approach computationally intractable for $N$ even moderately large. As a consequence, we are very often forced to introduce further approximations to the Full-CI methodology. 

We end this section by defining the Full-CI Hamiltonian which will be referenced in Section~\ref{sec:6} below.

\begin{definition}[Full-CI Hamiltonian]\label{def:FCI_Hamiltonian}~
	
	Let the $N$-particle approximation space $\mathcal{V}_K$ be defined through Definition \ref{def:n_basis} and let the electronic Hamiltonian $H\colon \widehat{\mathcal{H}}^1 \rightarrow \widehat{\mathcal{H}}^{-1}$ be defined through Equation \eqref{eq:Hamiltonian}. We define the Full-CI Hamiltonian $H_{K} \colon \mathcal{V}_K \rightarrow \mathcal{V}_K^*$ as the mapping with the property that for all $\Psi_K, \Phi_K \in \mathcal{V}_K$ it holds that
	\begin{equation}\label{eq:FCI_Hamiltonian}
		\langle \Psi_K, H_K \Phi_K\rangle_{\mathcal{V}_K \times \mathcal{V}_K^*}:= \langle \Psi_K, H \Phi_K \rangle_{\widehat{\mathcal{H}}^1 \times \widehat{\mathcal{H}}^{-1}}.
	\end{equation}
	
\end{definition}

	\section{Excitation Operators and the Coupled Cluster Ansatz}\label{sec:4}~
	
	Throughout this section, we assume the setting of Section \ref{sec:2}. Our goal now is to introduce the notions of excitation indices, excitation operators and the coupled cluster non-linear parameterisation ansatz.

Let us begin by recalling that we have introduced \emph{complete} single-particle and $N$-particle basis sets  $\mathcal{B}= \{\phi_j\}_{j \in \mathbb{N}} \subset \mH^1(\R^3;\C)$ and $\mathcal{B}_{\wedge} \subset \widehat{\mathcal{H}}^1$ respectively in Section \ref{sec:2a}. Next, we define a collection of index sets.

\vspace{4mm}

\begin{definition}[Excitation Index Sets]\label{def:Excitation_Index}~
	
	For each $j \in \{1, \ldots, N\}$ we define the index set $\mathcal{I}_j$ as
	\begin{align*}
		\mathcal{I}_j := \left\{ {{i_1, \ldots, i_j}\choose{\ell_1, \ldots, \ell_j}} \colon i_1 < \ldots< i_j \in \{1, \ldots, N\} \text{ and } \ell_1< \ldots < \ell_j  \in \{N+1, N+2, \ldots\} \right\},
	\end{align*}
	and we say that $\mathcal{I}_j$ is the excitation index set of order $j$. Additionally, we define
	\begin{align*}
		\mathcal{I}:= \bigcup_{j=1}^N \mathcal{I}_j,
	\end{align*}
	and we say that $\mathcal{I}$ is the global excitation index set.
\end{definition}

The excitation index sets $\{\mathcal{I}_j\}_{j=1}^N$ will be used to construct the so-called excitation and de-excitation operators which play a central role in post-Hartree Fock wave-function methods for further approximating the minimisation problem~\eqref{eq:FCI}.

\begin{definition}[Excitation Operators]\label{def:Excitation_Operator}~
	
	Let $j \in \mathbb{N}$ and let $\mu \in \mathcal{I}_j$ be of the form
	\begin{align*}
		\mu={{i_1, \ldots, i_j}\choose{\ell_1, \ldots, \ell_j}} \colon i_1 < \ldots< i_j \in \{1, \ldots, N\} \text{ and } \ell_1< \ldots < \ell_j  \in \{N+1, N+2, \ldots\}.
	\end{align*}
	
	We define the excitation operator $\mathcal{X}_{\mu} \colon \widehat{\mathcal{H}}^1 \rightarrow \widehat{\mathcal{H}}^1$ through its action on the $N$-particle basis set $\mathcal{B}_{\wedge}$: For $\Psi_{\nu}(\bold{x}_1, \ldots, \bold{x}_N) =  \frac{1}{\sqrt{N!}} \text{\rm det}\; \big(\phi_{\nu_j}(\bold{x}_i)\big)_{ i, j=1}^N$, we set 
	\begin{align*}
		\mathcal{X}_{\mu} \Psi_{\nu} = \begin{cases}
			0  &\quad \text{ if } \{i_1, \ldots, i_j\} \not \subset \{\nu_1, \ldots, \nu_N\},\\
			0   & \quad \text{ if } \exists  \ell_{m} \in  \{\ell_1, \ldots, \ell_j\}  \text{ such that } \ell_m \in \{\nu_1, \ldots, \nu_N\},\\
			\Psi_{\nu, \ell} \in \mathcal{B}_{\wedge} & \quad \text{ otherwise},
		\end{cases}
	\end{align*}
	where the determinant $\Psi_{\nu, \ell} $ is constructed from $\Psi_{\nu}$ by replacing all functions $\phi_{i_1},\ldots \phi_{i_j} $ used to construct $\Psi_{\nu}$ with functions $\phi_{\ell_1}, \ldots, \phi_{\ell_j}$ respectively.
\end{definition}

\begin{definition}[De-excitation Operators]\label{def:De-excitation_Operator}~
	
	Let $j \in \mathbb{N}$ and let $\mu \in \mathcal{I}_j$ be of the form
	\begin{align*}
		\mu={{i_1, \ldots, i_j}\choose{\ell_1, \ldots, \ell_j}} \colon i_1 < \ldots< i_j \in \{1, \ldots, N\} \text{ and } \ell_1< \ldots < \ell_j  \in \{N+1, N+2, \ldots\}.
	\end{align*}
	
	We define the de-excitation operator $\mathcal{X}^{\dagger}_{\mu} \colon \widehat{\mathcal{H}}^1 \rightarrow \widehat{\mathcal{H}}^1$ through its action on the $N$-particle basis set $\mathcal{B}_{\wedge}$: For $\Psi_{\nu}(\bold{x}_1, \ldots, \bold{x}_N) =  \frac{1}{\sqrt{N!}} \text{\rm det}\; \big(\phi_{\nu_j}(\bold{x}_i)\big)_{ i, j=1}^N$, we set 
	\begin{align*}
		\mathcal{X}_{\mu}^{\dagger} \Psi_{\nu} = \begin{cases}
			0  &\quad \text{ if } \{\ell_1, \ldots, \ell_j\} \not \subset \{\nu_1, \ldots, \nu_N\},\\
			0   & \quad \text{ if } \exists~i_{m} \in  \{i_1, \ldots, i_j\}  \text{ such that } i_m \in \{\nu_1, \ldots, \nu_N\},\\
			\Psi_{\nu}^{\ell} \in \mathcal{B}_{\wedge} & \quad \text{ otherwise},
		\end{cases}
	\end{align*}
	where the determinant $\Psi_{\nu}^{\ell}$ is constructed from $\Psi_{\nu}$ by replacing all functions $\phi_{\ell_1},\ldots \phi_{\ell_j} $ used to construct $\Psi_{\nu}$ with functions $\phi_{i_1}, \ldots, \phi_{i_j}$ respectively.
\end{definition}

It is natural to ask how de-excitation operators are related to excitation operators. The following remark summarises this relationship.

\begin{remark}[Relationship between Excitation and De-excitation Operators]\label{rem:L2_adjoint}~
	
	Consider the setting of Definitions \ref{def:Excitation_Operator} and \ref{def:De-excitation_Operator}. In some sense, each de-excitation operator reverses the action the corresponding excitation operator. More precisely, it can be shown that for any $\mu \in \mathcal{I}$, the de-excitation operator $\mathcal{X}_{\mu} \colon \widehat{\mathcal{H}}^1 \rightarrow \widehat{\mathcal{H}}^1$ is the $\widehat{\mathcal{L}}^2$-adjoint of the excitation operator $\mathcal{X}_\mu\colon \widehat{\mathcal{H}}^1 \rightarrow \widehat{\mathcal{H}}^1$, i.e.,
	\begin{align*}
		\forall \Phi, \widetilde{\Phi} \in \widehat{\mathcal{H}}^1, ~\forall \mu \in \mathcal{I} \colon \qquad \left( \widetilde{\Phi}, \mathcal{X}_{\mu}  \Phi\ \right)_{\widehat{\mathcal{L}}^2} = \left( \mathcal{X}_{\mu}^{\dagger}\widetilde{\Phi},  \Phi \right)_{\widehat{\mathcal{L}}^2}.
	\end{align*}
\end{remark}

	

Several properties of the excitation operators can now be deduced. We begin with a remark.

\begin{remark}[Interpretation of $N$-particle Basis in Terms of Excited Determinants]\label{rem:excited_determinants}~
	
	It is a simple exercise to show that the entire $N$-particle basis set $\mathcal{B}_{\wedge}$ can be generated through the action of the excitation operators on a so-called reference determinant. More precisely, we define $\Psi_0(\bold{x}_1, \ldots, \bold{x}_N):=\frac{1}{\sqrt{N!}} \text{\rm det}\; \big(\phi_{j}(\bold{x}_i)\big)_{ i, j=1}^N$, and it then follows that
	\begin{equation}\label{eq:basis_excited}
		\begin{split}
			\mathcal{B}_{\wedge}&= \{\Psi_0 \} \cup \left\{\mathcal{X}_\mu \Psi_0 \colon \mu \in \mathcal{I}_1\right\} \cup \left\{\mathcal{X}_\mu \Psi_0 \colon \mu \in \mathcal{I}_2\right\} \cup \ldots \cup \left\{\mathcal{X}_\mu \Psi_0 \colon \mu \in \mathcal{I}_N\right\}\\
			& =\{\Psi_0 \} \cup \left\{\mathcal{X}_\mu \Psi_0 \colon \mu \in \mathcal{I}\right\}.
		\end{split}
	\end{equation}
	
	This observation motivates the following convention and definition.
\end{remark}

\begin{Convention}[Reference Determinant]~
	
	Consider the setting of Remark \ref{rem:excited_determinants}. In the sequel, we will refer to the function $\mathcal{B}_{\wedge} \ni \Psi_0(\bold{x}_1, \ldots, \bold{x}_N)=\frac{1}{\sqrt{N!}} \text{\rm det}\; \big(\phi_{j}(\bold{x}_i)\big)_{ i, j=1}^N$, i.e., the determinant constructed from the first $N$ single particle basis functions $\{\phi_i\}_{i=1}^N$ as the reference determinant. Moreover, for any $\mu \in \mathcal{I}$, we will frequently denote $\Psi_{\mu}:= \mathcal{X}_{\mu} \Psi_0$. Finally, we will often refer to each set $\left\{\mathcal{X}_\mu \Psi_0 \colon \mu \in \mathcal{I}_j\right\}$ as the set of $j$-excited determinants,
\end{Convention}

\begin{definition}[Orthogonal Complement of the Reference Determinant]\label{def:V_K} ~
	
	Let the excitation index set $\mathcal{I}$ be defined through Definition \ref{def:Excitation_Index}, let the excitation operators $\{\mathcal{X}_{\mu}\}_{\mu \in \mathcal{I}}$ be defined through Definition \ref{def:Excitation_Operator}, and let $\Psi_0(\bold{x}_1, \bold{x}_2, \ldots, \bold{x}_N):=\frac{1}{\sqrt{N!}} \text{\rm det}\; \big(\phi_{j}(\bold{x}_i)\big)_{ i, j=1}^N$ denote the reference determinant. Then we define the set $\widetilde{\mathcal{B}_{\wedge}} \subset \mathcal{B}_{\wedge}$ and the subspace $\widetilde{\mathcal{V}} \subset \widehat{\mathcal{H}}^1$ as
	\begin{align*}
		\widetilde{\mathcal{B}_{\wedge}}:=& \left\{\mathcal{X}_\mu \Psi_0 \colon \mu \in \mathcal{I}\right\}, \qquad \text{and}\\
		\widetilde{\mathcal{V}}:=&  \left\{\Psi_0\right\}^{\perp}:= \left\{\Phi \in \widehat{\mathcal{H}}^1\colon \quad (\Phi, \Psi_0)_{\widehat{\mathcal{L}}^2} =0\right\},
	\end{align*}
	and we observe that $\widetilde{\mathcal{B}_{\wedge}}$ is a complete, $\widehat{\mathcal{L}}^2$-orthonormal basis for $\widetilde{\mathcal{V}}$.
\end{definition}

\begin{definition}[Complementary Decomposition of $\widehat{\mathcal{H}}^1$] \label{lem:comp}~
	
	Let the excitation index set $\mathcal{I}$ be defined through Definition \ref{def:Excitation_Index}, let the excitation operators $\{\mathcal{X}_{\mu}\}_{\mu \in \mathcal{I}}$ be defined through Definition \ref{def:Excitation_Operator}, and let $\Psi_0(\bold{x}_1, \bold{x}_2, \ldots, \bold{x}_N):=\frac{1}{\sqrt{N!}} \text{\rm det}\; \big(\phi_{j}(\bold{x}_i)\big)_{ i, j=1}^N$ denote the reference determinant. We define $\mathbb{P}_0 \colon \widehat{\mathcal{H}}^1 \rightarrow  \widehat{\mathcal{H}}^1$ as the $\widehat{\mathcal{L}}^2$-orthogonal projection operator onto $ \text{span} \left\{\Psi_0\right\}$, and we define $P_0^{\perp}:=\mathbb{I}-\mathbb{P}_0 \colon \widehat{\mathcal{H}}^1 \rightarrow  \widehat{\mathcal{H}}^1$ as its complement. Additionally, we introduce the complementary decomposition of the $N$-particle space $\widehat{\mathcal{H}}^1$ given by
	\begin{align}\label{eq:decomp}
		\widehat{\mathcal{H}}^1=  \text{\rm span} \left\{\Psi_0\right\} \oplus \widetilde{\mathcal{V}}, \quad \text{where we emphasise that }~ \widetilde{\mathcal{V}}=\text{\rm Ran}\mathbb{P}_0^{\perp}.
	\end{align}
\end{definition}

The complementary decomposition introduced through Equation \eqref{eq:decomp} will be particularly important in our subsequent analysis of the coupled cluster method in Section \ref{sec:5}. Let us emphasise that the construction of these complementary spaces is based on $\widehat{\mathcal{L}}^2$-orthogonality rather than $\widehat{\mathcal{H}}^1$ orthogonality. This choice is intentional as it simplifies considerably the analysis in Sections \ref{sec:5} and \ref{sec:6}. Let us also remark that the projection operators $\mathbb{P}_0 \colon \widehat{\mathcal{H}}^1 \rightarrow  \widehat{\mathcal{H}}^1 $ and $\mathbb{P}^{\perp}_0 \colon \widehat{\mathcal{H}}^1 \rightarrow  \widehat{\mathcal{H}}^1 $ are both, nevertheless, bounded operators with respect to the $\Vert \cdot \Vert_{\widehat{\mathcal{H}}^1}$ norm since they both possess a closed range and a closed kernel.

Returning for the moment to the notion of excitation operators, we see that it is easy to deduce that each excitation operator $\mathcal{X}_{\mu}, ~ \mu \in \mathcal{I}$ is a bounded linear operator from $\widehat{\mathcal{H}}^1$ to $\widehat{\mathcal{H}}^1$. However, we will frequently be interested in so-called cluster operators which are summations of the excitation operators $\mathcal{X}_{\mu}, ~\mu \in \mathcal{I}$, and such summations need not be bounded operators from $\widehat{\mathcal{H}}^1$ to $\widehat{\mathcal{H}}^1$ or even from $\widehat{\mathcal{L}}^2$ to $\widehat{\mathcal{L}}^2$. Fortunately, the following result was proven in \cite{MR3021693}.

\begin{proposition}[Cluster Operators as Bounded Maps on $\widehat{\mathcal{L}}^2$]\label{prop:Yvon}~
	
	Let the excitation index set $\mathcal{I}$ be defined through Definition \ref{def:Excitation_Index} and let $\bt= \{\bt_{\mu}\}_{\mu \in \mathcal{I}} \in \ell^2(\mathcal{I})$. Then there exists a unique bounded linear operator $\mathcal{T} \colon\widehat{\mathcal{L}}^2 \rightarrow \widehat{\mathcal{L}}^2$, the so-called \emph{cluster operator} generated by $\bt$, such that $\mathcal{T}=\sum_{\mu \in \mathcal{I}} \bt_{\mu}\mathcal{X}_{\mu}$ where the series convergence holds with respect to the operator norm $\Vert \cdot \Vert_{\widehat{\mathcal{L}}^2 \to\widehat{\mathcal{L}}^2} $.
\end{proposition}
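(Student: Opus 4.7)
The plan is to construct $\mathcal{T}$ as the operator-norm limit of finite partial sums of excitation operators. The crucial step is a uniform operator-norm bound on the finite truncations that exploits the combinatorial structure of the excitation operators acting on the $\widehat{\mathcal{L}}^2$-orthonormal basis $\mathcal{B}_{\wedge}$.

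For each finite subset $F \subset \mathcal{I}$, I would define $\mathcal{T}_F := \sum_{\mu \in F} \bt_\mu \mathcal{X}_\mu$, which is obviously a bounded operator on $\widehat{\mathcal{L}}^2$ as a finite linear combination of bounded maps. The key estimate to establish is that there exists a constant $C_N > 0$, depending only on $N$, such that $\|\mathcal{T}_F\|_{\widehat{\mathcal{L}}^2 \to \widehat{\mathcal{L}}^2} \leq C_N \|\bt|_F\|_{\ell^2(F)}$ uniformly in $F$. To prove this, I would expand an arbitrary $\Psi \in \widehat{\mathcal{L}}^2$ as $\Psi = \sum_\kappa c_\kappa \Psi_\kappa$ in $\mathcal{B}_{\wedge}$ and observe that, since each $\mathcal{X}_\mu$ sends a basis element either to zero or to another element of $\mathcal{B}_{\wedge}$, the $\Psi_{\kappa'}$-coefficient of $\mathcal{T}_F \Psi$ reduces to a sum over the index set $S_{\kappa'} := \{\mu \in \mathcal{I} : \mathcal{X}_\mu^\dagger \Psi_{\kappa'} \neq 0\}$. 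A crucial combinatorial fact is that $|S_{\kappa'}|$ is bounded by a constant depending only on $N$: the admissible $\mu$ correspond to pairs of subsets of the holes and particles of $\Psi_{\kappa'}$ relative to the reference $\Psi_0$, and these number at most $\binom{2N}{N}$. Applying Cauchy-Schwarz within each finite coefficient sum, squaring, summing over $\kappa'$, swapping the order of summation over $\kappa'$ and $\mu$, and using the injectivity of the map $\kappa' \mapsto \kappa(\mu, \kappa')$ (the unique preimage of $\Psi_{\kappa'}$ under $\mathcal{X}_\mu$) then yields the desired inequality.

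Once this bound is in hand, the net $\{\mathcal{T}_F\}_F$ is Cauchy in operator norm, since for finite $F \subset F'$ we have $\|\mathcal{T}_{F'} - \mathcal{T}_F\|_{\widehat{\mathcal{L}}^2 \to \widehat{\mathcal{L}}^2} \leq C_N \|\bt|_{F' \setminus F}\|_{\ell^2}$, which vanishes as $F$ exhausts $\mathcal{I}$ because $\bt \in \ell^2(\mathcal{I})$. The net therefore converges in the Banach space of bounded operators on $\widehat{\mathcal{L}}^2$ to a limit $\mathcal{T}$ that satisfies the series identity $\mathcal{T} = \sum_{\mu \in \mathcal{I}} \bt_\mu \mathcal{X}_\mu$ in operator norm by construction. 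Uniqueness is immediate since any bounded operator agreeing with the partial sums on the dense subspace $\text{\rm span}\,\mathcal{B}_{\wedge} \subset \widehat{\mathcal{L}}^2$ must coincide with $\mathcal{T}$.

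The main obstacle is precisely the uniform operator-norm estimate on $\mathcal{T}_F$: a naive triangle-inequality bound would require $\bt \in \ell^1(\mathcal{I})$, which is strictly stronger than the assumed $\ell^2$ regularity. The argument must therefore exploit both the combinatorial boundedness of the sets $S_{\kappa'}$ and the fact that excitation operators act essentially as partial isometries on $\mathcal{B}_{\wedge}$, sending distinct basis elements to distinct (or zero) targets.
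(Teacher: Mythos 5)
Your proposal is correct. Note that the paper does not prove this proposition at all: it is imported verbatim from the cited reference \cite{MR3021693}, so there is no internal proof to compare against. Your argument is a valid self-contained reconstruction of the standard one: the key points all check out, namely (i) for a fixed target determinant $\Psi_{\kappa'}$ with $p\leq N$ virtual orbitals, the admissible $\mu$ with $\mathcal{X}_{\mu}^{\dagger}\Psi_{\kappa'}\neq 0$ are determined by choosing equal-sized subsets of the $p$ particles present and the $p$ holes of $\Psi_{\kappa'}$, giving $\sum_{j}\binom{p}{j}^2\leq\binom{2N}{N}$ of them; (ii) for fixed $\mu$ the preimage map $\kappa'\mapsto\kappa(\mu,\kappa')$ is injective, so the Cauchy--Schwarz/sum-interchange yields the uniform bound $\Vert\mathcal{T}_F\Vert_{\widehat{\mathcal{L}}^2\to\widehat{\mathcal{L}}^2}\leq\binom{2N}{N}^{1/2}\Vert\bt\vert_F\Vert_{\ell^2}$ independently of $F$; and (iii) the $\ell^2$ tail control then gives operator-norm convergence of the partial sums and uniqueness by density of $\mathrm{span}\,\mathcal{B}_{\wedge}$. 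This is essentially the same counting-plus-partial-isometry mechanism used in the cited work, correctly avoiding the naive $\ell^1$ requirement.
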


Next, we introduce a coefficient subspace of $\ell^2(\mathcal{I})$ which will limit the class of cluster operators that we consider in the sequel.   

\begin{definition}[Coefficient Space For Cluster Operators]\label{def:coeff_space}~
	
	Let the excitation index set $\mathcal{I}$ be defined through Definition \ref{def:Excitation_Index}. We define the Hilbert space of sequences $\mathbb{V} \subset \ell^2(\mathcal{I})$ as the set
	\begin{equation}\label{eq:coeff_space}
		\mathbb{V}:= \left\{\bold{t}:= \{\bt_{\mu}\}_{\mu \in \mathcal{I}} \in \ell^2(\mathcal{I}) \colon\quad \sum_{\mu \in \mathcal{I}}\bt_{\mu} \Psi_\mu ~\in \widehat{\mathcal{H}}^1 \right\},
	\end{equation}
	equipped with the inner product
	\begin{equation}\label{eq:coeff_inner}
		\forall \bold{t}, \bs \in \mathbb{V} ~\text{\rm with }~ \bt:= (\bt_{\mu})_{\mu \in \mathcal{I}}, ~\bs:= (\bs_{\mu})_{\mu \in \mathcal{I}} \colon \qquad \left(\bs, \bt\right)_{\mathbb{V}} := \left(\sum_{\mu \in \mathcal{I}}\bs_{\mu} \Psi_\mu, \sum_{\mu \in \mathcal{I}}\bt_{\mu} \Psi_\mu\right)_{\widehat{\mathcal{H}}^1}.
	\end{equation}
	
	Additionally, we define $\mathbb{V}^*$ as the topological dual space of $\mathbb{V}$, equipped with the canonical dual norm
	\begin{align*}
		\forall \bw \in \mathbb{V}^* \colon \qquad  \Vert \bw\Vert_{\mathbb{V}^*}:= \sup_{0\neq \bt \in \mathbb{V}}\frac{\big \vert \left\langle \bw, \bt\right \rangle_{\mathbb{V}^* \times \mathbb{V}} \big \vert}{\Vert \bt \Vert_{\mathbb{V}}},
	\end{align*} 
\end{definition}

Some remarks are now in order.

\begin{remark}[Clarification of the Definition of the Coefficient Space]~
	
	Consider Definition \ref{def:coeff_space} of the coefficient space $\mathbb{V} \subset \ell^2(\mathcal{I})$ and let $\bt = \{\bt_{\mu}\}_{\mu \in \mathcal{I}} \in \mathbb{V}$. We emphasise here that the assertion $\sum_{\mu \in \mathcal{I}}\bt_{\mu} \Psi_\mu \in \widehat{\mathcal{H}}^1$ should be understood in the following sense: there exists $\Psi_{\bold{t}} \in \widehat{\mathcal{H}}^1 \subset \widehat{\mathcal{L}}^2$ such that $\Psi_{\bold{t}} =\sum_{\mu \in \mathcal{I}}\bt_{\mu} \Psi_\mu$ where the series convergence holds with respect to the $\widehat{\mathcal{L}}^2$ norm. In particular, this series convergence does not a priori hold with respect to the $\widehat{\mathcal{H}}^1$-norm, and it is only the limit function $\Psi_{\bt}$ that is an element of $\widehat{\mathcal{H}}^1$.
\end{remark}

\begin{remark}[Dual Coefficient Space]\label{rem:coeff_space}~
	
	Consider the setting of Definition \ref{def:coeff_space}. Throughout, this article, we will denote by $\mathbb{V}^*$  the topological dual space of $\mathbb{V}$ equipped with the canonical dual norm. Note that since $\widehat{\mathcal{H}}^1$ is dense and continuously embedded in $\widehat{\mathcal{L}}^2$, we can deduce that the coefficient space $\mathbb{V}$ is dense and continuously embedded in $\ell^2(\mathcal{I})$. As a consequence, the inner product $(\cdot, \cdot)_{\ell^2}$ can be continuously extended to the duality pairing $\langle \cdot, \cdot\rangle_{\mathbb{V} \times \mathbb{V}^*}$ on $\mathbb{V} \times \mathbb{V}^*$. This fact will be of occasional use in the sequel.
 
	
\end{remark}

	
	

\begin{Notation}[Coefficient Sequences and Cluster Operators]~
	
	Let $\bt =\{\bt_{\mu}\}_{\mu \in \mathcal{I}} \in \mathbb{V}$. As mentioned previously, the operator $\mathcal{T}:=\sum_{\mu \in \mathcal{I}}\bt_{\mu} \mathcal{X}_{\mu}$ is known as the cluster operator generated by $\bt$, and it plays a key role in the coupled cluster formalism.
	
	For clarity of exposition, we will adopt the convention of denoting by small bold letters such as $\br, \bs, \bt$, and $\bw$, etc., elements of the coefficient space $\mathbb{V}$ and denoting by capital curly letters such as $\mathcal{R}, \mathcal{S}, \mathcal{T}$, and $\mathcal{W}$, etc., the corresponding cluster operators with the understanding that $\mathcal{R}:= \sum_{\mu \in \mathcal{I}}\br_{\mu} \mathcal{X}_{\mu}$, $\mathcal{S}:= \sum_{\mu \in \mathcal{I}}\bs_{\mu} \mathcal{X}_{\mu}$, and so on.
\end{Notation}

\begin{remark}[Representation of Elements of the Complementary Subspace $\widetilde{\mathcal{V}}$]\label{rem:comp}~
	
	Consider the setting of Definition \ref{def:coeff_space} and recall Definition \ref{def:V_K} of the space $\widetilde{\mathcal{V}} \subset \widehat{\mathcal{H}}^1$. It is not difficult to see that every element ${\Phi}_{\bs}:= \sum_{\mu \in \mathcal{I}} \bold{s}_{\mu} \mathcal{X}_{\mu} \Psi_0 \in \widetilde{\mathcal{V}}$ generates a sequence $\bold{s}:= \{\bold{s}_\mu\}_{\mu \in \mathcal{I}} \in \mathbb{V}$ such that
	\begin{align*}
		{\Phi}_{\bs}= \sum_{\mu \in \mathcal{I}}\bold{s}_{\mu} \mathcal{X}_{\mu} \Psi_0  = \mathcal{S}\Psi_0.
	\end{align*}
	Conversely, given any sequence $\bold{w}:= \{\bold{w}_\mu\}_{\mu \in \mathcal{I}} \in \mathbb{V}$, we can define the function ${\Phi}_{\bw} \in \widetilde{\mathcal{V}}$ as
	\begin{align*}
		{\Phi}_{\bw}=\sum_{\mu \in \mathcal{I}}\bold{w}_{\mu} \mathcal{X}_{\mu} \Psi_0= \mathcal{W}\Psi_0. 
	\end{align*}
	
	Therefore, in the sequel (in particular in Section \ref{sec:5}), we will occasionally write elements of the space $\widetilde{\mathcal{V}}$ as, for instance, $\mathcal{S}\Psi_0$ or $\mathcal{W}\Psi_0$ where $\mathcal{S}:= \sum_{\mu \in \mathcal{I}}\bold{s}_{\mu} \mathcal{X}_{\mu}$ and $\mathcal{W}:= \sum_{\mu \in \mathcal{I}}\bold{w}_{\mu} \mathcal{X}_{\mu}$ for some sequences $\bold{s}:= \{\bold{s}_\mu\}_{\mu \in \mathcal{I}} \in \mathbb{V}$ and  $\bold{w}:= \{\bold{w}_\mu\}_{\mu \in \mathcal{I}} \in \mathbb{V}$.
\end{remark}

The following theorem now summarises the main properties of the excitation operators $\mathcal{X}_\mu, ~\mu \in \mathcal{I}$ and cluster operators constructed from these excitation operators. The establishment of these properties in infinite dimensions was the main achievement of the article \cite{MR3021693}. In finite-dimensions, where the situation is considerably simpler from a topological point of view, these results were first proven in the mathematical literature in \cite{Schneider_1}.

\vspace{5mm}

\begin{theorem}[Properties of Excitation and Cluster Operators]\label{pro:excit}~
	
	Let the excitation index set $\mathcal{I}$ be defined through Definition \ref{def:Excitation_Index}, let the excitation operators $\{\mathcal{X}_{\mu}\}_{\mu \in \mathcal{I}}$ and de-excitation operators $\{\mathcal{X}^{\dagger}_{\mu}\}_{\mu \in \mathcal{I}}$ be defined through Definitions \ref{def:Excitation_Operator} and \ref{def:De-excitation_Operator} respectively, and let the Hilbert space $\mathbb{V}$ of sequences be defined through Definition \ref{def:coeff_space}. Then
	\begin{enumerate}
		\item For all $\mu, \nu \in \mathcal{I}$, it holds that $\mathcal{X}_{\mu} \mathcal{X}_{\nu}=\mathcal{X}_{\nu} \mathcal{X}_{\mu}$ and $\mathcal{X}^{\dagger}_{\mu} \mathcal{X}^{\dagger}_{\nu}=\mathcal{X}^{\dagger}_{\nu} \mathcal{X}^{\dagger}_{\mu}$ .
		
		\item For every $\Phi \in \widehat{\mathcal{H}}^1$ that satisfies the so-called intermediate normalisation condition $\left(\Phi, \Psi_0\right)_{\mathcal{L}^2}=1$, there exists a unique sequence $\bold{r}=\left\{\bold{r}_\mu\right\}_{\mu \in \mathcal{I}} \in \mathbb{V}$ with corresponding cluster operator $\mathcal{R}=\sum_{\mu \in \mathcal{I}}\br_{\mu}\mathcal{X}_{\mu}$ such that
		\begin{align*}
			\Phi = \Psi_0 + \mathcal{R} \Psi_0.
		\end{align*}
		
		\vspace{-4mm}
		\item Let $\bt \in \mathbb{V}$. Then
		\begin{itemize}
			\item The cluster operator $\mathcal{T}= \sum_{\mu \in \mathcal{I}}\bt_{\mu}\mathcal{X}_{\mu}$ is a bounded linear map from $\widehat{\mathcal{H}}^1$ to $\widehat{\mathcal{H}}^1$ and there exists a constant $\beta>0$ depending only on $N$ such that
			\begin{align*}
				\Vert \bt \Vert_{\mathbb{V}} \leq \Vert \mathcal{T}\Vert_{\widehat{\mathcal{H}}^1 \to \widehat{\mathcal{H}}^1} \leq \beta \Vert \bt \Vert_{\mathbb{V}}.
			\end{align*}
			
			\item The de-excitation cluster $\mathcal{T}^{\dagger}= \sum_{\mu \in \mathcal{I}}\bt_{\mu}\mathcal{X}^{\dagger}_{\mu}$ is also bounded linear map from $\widehat{\mathcal{H}}^1$ to $\widehat{\mathcal{H}}^1$ and there exists a constant $\beta^{\dagger}>0$ depending only on $N$ such that
			\begin{align*}
				\Vert \mathcal{T}^{\dagger}\Vert_{\widehat{\mathcal{H}}^1 \to \widehat{\mathcal{H}}^1} \leq \beta^{\dagger} \Vert \bt \Vert_{\mathbb{V}}.
			\end{align*}
			
			\item The cluster operator $\mathcal{T}= \sum_{\mu \in \mathcal{I}}\bt_{\mu}\mathcal{X}_{\mu}$ has an extension to a bounded linear operator from $\widehat{\mathcal{H}}^{-1}$ to $\widehat{\mathcal{H}}^{-1}$.
			
		\end{itemize}

		\item Define the set of operators
		\begin{align*}
			\mathfrak{L}:= \left\{t_0 {\rm I} +  \mathcal{T} \colon \quad t_0 \in \mathbb{R}, ~\mathcal{T}=\sum_{\mu \in \mathcal{I}}  \bt_{\mu} \mathcal{X}_{\mu} \quad \text{such that }~ \bt=\{\bt_{\mu}\}_{\mu \in \mathcal{I}} \in \mathbb{V} \right\}
		\end{align*}
		The following hold:
		
		\begin{itemize}
			\item The set $\mathfrak{L}$ forms a closed commutative subalgebra in the algebra of bounded linear operators acting from $\widehat{\mathcal{H}}^{1}$ to $\widehat{\mathcal{H}}^{1}$ (and also from $\widehat{\mathcal{H}}^{-1}$ to $\widehat{\mathcal{H}}^{-1}$).
			
			\item The subalgebra $\mathfrak{L}$ is closed under inversion and the spectrum of any $\mathfrak{L}\ni \mathcal{L}= t_0{\rm I} +  \mathcal{T}$ is exactly $\sigma(\mathcal{L})=\{t_0\}$.
			
			\item Any element in $\mathfrak{L}$ of the form $\mathcal{T} =\sum_{\mu \in \mathcal{I}}  \bt_{\mu} \mathcal{X}_{\mu}$ with $\bt \in \mathbb{V}$ is nilpotent: it holds that $\mathcal{T}^{N+1}\equiv 0$.
			
			\item The exponential function is a locally $\mathscr{C}^{\infty}$ diffeomorphism from $\mathfrak{L}$ to $\mathfrak{L}\setminus \{0\}$.
		\end{itemize} 
	\end{enumerate}
\end{theorem}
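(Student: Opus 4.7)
\emph{Proof plan.} I would establish the four assertions in order, with (3) being the main technical obstacle and (4) reducing essentially to a nilpotency argument built on (1) and (3).

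Part (1) is purely combinatorial. Applying $\mathcal{X}_\mu \mathcal{X}_\nu$ and $\mathcal{X}_\nu \mathcal{X}_\mu$ to an arbitrary $\Psi_\kappa \in \mathcal{B}_\wedge$ and inspecting Definition \ref{def:Excitation_Operator} case by case, both compositions vanish under exactly the same occupancy incompatibility conditions, and when non-zero they yield the Slater determinant built from the same set of single-particle functions. The identical argument handles $\mathcal{X}^\dagger$. For part (2), I would expand $\Phi$ in the complete $\widehat{\mathcal{L}}^2$-orthonormal basis $\mathcal{B}_\wedge = \{\Psi_0\} \cup \{\Psi_\mu\}_{\mu \in \mathcal{I}}$, obtaining $\Phi = c_0\Psi_0 + \sum_\mu c_\mu \Psi_\mu$. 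The intermediate normalisation $(\Phi,\Psi_0)_{\widehat{\mathcal{L}}^2} = 1$ forces $c_0 = 1$, and since $\Phi - \Psi_0 \in \widehat{\mathcal{H}}^1$, the sequence $\br := (c_\mu)_{\mu \in \mathcal{I}}$ belongs to $\mathbb{V}$ by the very definition of this space (Definition \ref{def:coeff_space}); uniqueness is $\widehat{\mathcal{L}}^2$-orthonormality.

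Part (3) carries the main technical effort. The lower bound $\Vert \bt \Vert_{\mathbb{V}} \leq \Vert \mathcal{T}\Vert_{\widehat{\mathcal{H}}^1 \to \widehat{\mathcal{H}}^1}$ is immediate since $\mathcal{T}\Psi_0 = \sum_\mu \bt_\mu \Psi_\mu$ has $\widehat{\mathcal{H}}^1$-norm equal to $\Vert \bt \Vert_{\mathbb{V}}$ by construction. The upper bound is the hard step: given $\Phi \in \widehat{\mathcal{H}}^1$ expanded in $\mathcal{B}_\wedge$, I would apply $\mathcal{T}$ term-by-term via Definition \ref{def:Excitation_Operator} and exploit the gradient structure of the $\widehat{\mathcal{H}}^1$-inner product on antisymmetric tensors to absorb the combinatorics into a constant $\beta$ depending only on $N$; this is the route carried out in \cite{MR3021693} and is the sole step that really uses fine properties of the Sobolev norm. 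The de-excitation bound follows either by an analogous computation or by passing to the $\widehat{\mathcal{L}}^2$-adjoint via Remark \ref{rem:L2_adjoint} combined with a density argument, and the extension of $\mathcal{T}$ to a bounded map on $\widehat{\mathcal{H}}^{-1}$ is then obtained by duality from the $\widehat{\mathcal{H}}^1$-boundedness of $\mathcal{T}^\dagger$.

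Part (4) follows by combining the previous assertions with nilpotency. Closure of $\mathfrak{L}$ under the algebra operations is a consequence of (3), and commutativity of products within $\mathfrak{L}$ follows from (1). The key structural observation is that every cluster operator $\mathcal{T}$ satisfies $\mathcal{T}^{N+1} = 0$: since each $\mathcal{X}_\mu$ with $\mu \in \mathcal{I}_j$ raises the excitation order by $j \geq 1$, any product of $N+1$ excitation operators annihilates every $\Psi_\kappa \in \mathcal{B}_\wedge$ (and hence all of $\widehat{\mathcal{H}}^1$ by density). From nilpotency, $\mathcal{L} - \lambda \mathrm{I}$ with $\mathcal{L} = t_0 \mathrm{I} + \mathcal{T}$ and $\lambda \neq t_0$ is inverted by the finite geometric series $\sum_{k=0}^N (-1)^k (t_0 - \lambda)^{-k-1} \mathcal{T}^k$, which lies in $\mathfrak{L}$; this simultaneously proves $\sigma(\mathcal{L}) = \{t_0\}$ and closure of $\mathfrak{L}$ under inversion. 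Finally, $\exp(\mathcal{L}) = e^{t_0} \sum_{k=0}^{N} \mathcal{T}^k/k! \in \mathfrak{L}$ has spectrum $\{e^{t_0}\} \not\ni 0$, and its Fréchet derivative $\mathcal{M} \mapsto \exp(\mathcal{L})\,\mathcal{M}$ (valid by commutativity) is therefore an isomorphism of $\mathfrak{L}$; the inverse function theorem on the Banach algebra $\mathfrak{L}$ then yields the claimed local $\mathscr{C}^\infty$ diffeomorphism onto its (necessarily invertible) image.
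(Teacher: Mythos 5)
First, a point of comparison: the paper does not prove Theorem \ref{pro:excit} at all — it is stated as a summary of results established elsewhere, in \cite{MR3021693} for the infinite-dimensional setting and \cite{Schneider_1} in finite dimensions — so there is no internal proof to measure you against. Your outline follows the same strategy as those references: combinatorial verification of commutativity on Slater determinants for (1), basis expansion plus intermediate normalisation for (2), and nilpotency ($\mathcal{T}^{N+1}=0$ because each excitation strictly raises the excitation rank, which is capped at $N$) driving the finite Neumann/geometric-series inversion, the spectrum computation, and the exponential-as-local-diffeomorphism argument for (4). Those parts of your sketch are essentially sound, modulo two things you treat too quickly in (4): closure of $\mathfrak{L}$ under multiplication is not just "a consequence of (3)" — you need that a product of two convergent cluster series is again a cluster series generated by an element of $\mathbb{V}$ — and the assertion that $\mathfrak{L}$ is \emph{closed} (in operator norm) is a separate, non-trivial point in \cite{MR3021693}.

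The genuine gap is in (3), which is precisely the hard content of the theorem. The upper bound $\Vert \mathcal{T}\Vert_{\widehat{\mathcal{H}}^1\to\widehat{\mathcal{H}}^1}\leq \beta\Vert\bt\Vert_{\mathbb{V}}$ with $\beta$ depending only on $N$ is the main achievement of \cite{MR3021693}; saying one should "apply $\mathcal{T}$ term-by-term and exploit the gradient structure to absorb the combinatorics" is a description of the goal, not an argument — the actual proof requires a careful decomposition of the $\widehat{\mathcal{H}}^1$-norm (one-particle Laplacian structure) and non-obvious summation estimates, and nothing in your sketch would produce the $N$-only dependence. Deferring to the reference is what the paper itself does, but it means your proposal does not constitute a proof of this part. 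Relatedly, your alternative route to the de-excitation bound is flawed: passing to the $\widehat{\mathcal{L}}^2$-adjoint of a bounded map $\widehat{\mathcal{H}}^1\to\widehat{\mathcal{H}}^1$ and using density yields, by duality, a bounded extension $\widehat{\mathcal{H}}^{-1}\to\widehat{\mathcal{H}}^{-1}$, not $\widehat{\mathcal{H}}^1\to\widehat{\mathcal{H}}^1$ boundedness of $\mathcal{T}^{\dagger}$; only the "analogous computation" option works there, and it again requires the same kind of technical estimate as for $\mathcal{T}$. Your final step — obtaining the $\widehat{\mathcal{H}}^{-1}\to\widehat{\mathcal{H}}^{-1}$ extension of $\mathcal{T}$ by duality from the $\widehat{\mathcal{H}}^1$-boundedness of $\mathcal{T}^{\dagger}$ — is correct, but it inherits the unproven bound on $\mathcal{T}^{\dagger}$.
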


As a consequence of Theorem \ref{pro:excit} (c.f., Properties $(\textit{2})$ and $(\textit{4})$), one can prove that any intermediately normalised element of the $N$-particle space can be parameterised through an exponential cluster operator. More precisely, given the excitation index set $\mathcal{I}$ defined through Definition \ref{def:Excitation_Index} and the excitation operators $\{\mathcal{X}_{\mu}\}_{\mu \in \mathcal{I}}$ defined through Definition \ref{def:Excitation_Operator}, for any $\Phi \in \widehat{\mathcal{H}}^1$ such that $\left(\Phi, \Psi_0\right)_{\mathcal{L}^2}=1$, there exists a unique sequence $\bt =\{\bt_{\mu}\}_{\mu \in \mathcal{I}} \in \mathbb{V}$ and a unique cluster operator $\mathcal{T}=\sum_{\mu \in \mathcal{I}} \bt_{\mu} \mathcal{X}_{\mu}$ such that
\begin{align}\label{thm:3}
	\Phi = e^{\mathcal{T}} \Psi_0.
\end{align}
A proof of this statement in the infinite-dimensional setting can be found in \cite{MR3021693} while the corresponding proof for the finite-dimensional case is given in \cite{Schneider_1}.

Equation \eqref{thm:3} implies in particular that if the sought-after ground state wave-function $\Psi^*_{\rm GS} \in \widehat{\mathcal{H}}^1$ that solves the minimisation problem \eqref{eq:Ground_State} is intermediately normalised, then it can also  be written in the form
\begin{align*}
	\Psi^*_{\rm GS} = e^{\mathcal{T}^*}\Psi_0,
\end{align*}
for some sequence $\bt^* = \{\bt^*_{\mu}\}_{\mu \in \mathcal{I}}$ and corresponding cluster operator $\mathcal{T}^*=\sum_{\mu \in \mathcal{I}} \bt_{\mu}^* \mathcal{X}_{\mu}$. In other words, the minimisation problem \eqref{eq:Ground_State} can be replaced by an equivalent problem which consists of finding the sequence $\bt^* =\{\bt^*_{\mu}\}_{\mu \in \mathcal{I}} \in \mathbb{V}$ used to construct the appropriate cluster operator $\mathcal{T}^*$ that appears in the exponential parametrisation of $\Psi^*$. This leads to the following so-called \emph{continuous} coupled cluster equations.

\vspace{3mm}

\textbf{Continuous Coupled Cluster Equations:}~

Let the excitation index set $\mathcal{I}$ be defined through Definition \ref{def:Excitation_Index} and let the excitation operators $\{\mathcal{X}_{\mu}\}_{\mu \in \mathcal{I}}$ be defined through Definition \ref{def:Excitation_Operator}. We seek a sequence $\bt^* =\{\bt^*_{\mu}\}_{\mu \in \mathcal{I}} \in \mathbb{V}$ such that for all $\mu \in \mathcal{I}$ we have
\begin{equation}\label{eq:CC}
	\left\langle\mathcal{X}_\mu \Psi_0, e^{-\mathcal{T}^*}H e^{\mathcal{T}^*} \Psi_0\right\rangle_{\widehat{\mathcal{H}}^{1} \times \widehat{\mathcal{H}}^{-1}}	=0, \quad \text{ where } ~\mathcal{T}^*= \sum_{\mu \in \mathcal{I}} \bt^*_{\mu} \mathcal{X}_{\mu}.
\end{equation}

Once Equation \eqref{eq:CC} has been solved, the associated coupled cluster energy $\mathcal{E}_{\rm CC}^*$ is given by
\begin{equation}\label{eq:CC_Energy}
	\mathcal{E}_{\rm CC}^* := \left\langle\Psi_0, e^{-\mathcal{T}^*}H e^{\mathcal{T}^*} \Psi_0\right\rangle_{\widehat{\mathcal{H}}^{1} \times \widehat{\mathcal{H}}^{-1}}, \quad \text{ where }~ \mathcal{T}^*= \sum_{\mu \in \mathcal{I}} \bt^*_{\mu} \mathcal{X}_{\mu}.
\end{equation}

\begin{remark}[Solutions to the Continuous Coupled Cluster Equations]~
	
	Consider the continuous coupled cluster equations \eqref{eq:CC}. Under the assumption that the ground state wave-function $\Psi_{\rm GS}$ of the electronic Hamiltonian $H \colon \widehat{\mathcal{H}}^{1} \rightarrow \widehat{\mathcal{H}}^{-1}$ is intermediately normalisable with respect to the chosen reference determinant $\Psi_0$, i.e., it is not orthogonal to $\Psi_0$, it is obvious that there exists a corresponding solution to this non-linear system of equations. Indeed, by Equation \eqref{thm:3}, there exists a sequence $\bt_{\rm GS}^* =\{\bt^*_{\mu}\}_{\mu \in \mathcal{I}} \in \mathbb{V}$ such that $\Psi^*_{\rm GS}= e^{\mathcal{T}_{\rm GS}^*} \Psi_0$ with $\mathcal{T}_{\rm GS}^*= \sum_{\mu \in \mathcal{I}} \bt^*_{\mu} \mathcal{X}_{\mu}$, and it can readily be verified that this sequence $\bt_{\rm GS}^*$ solves exactly Equation \eqref{eq:CC}, and consequently $\mathcal{E}_{\rm CC}^*  = \mathcal{E}_{\rm GS}^*$.
	
	Of course, $\bt_{\rm GS}^*$ defined as above need not be the unique solution to the coupled cluster equations \eqref{eq:CC}. In fact, as we discuss in the next Section \ref{sec:5}, \emph{every} intermediately normalisable eigenfunction of the electronic Hamiltonian will generate a solution to Equation \eqref{eq:CC}. From a theoretical point of view, this means that only \emph{local} well-posedness results can be expected to hold for the continuous CC equations.
\end{remark}

The continuous coupled cluster equations are an infinite system of non-linear equations and thus cannot be solved exactly. Instead, one introduces an approximation of the continuous coupled cluster equations by considering, instead of the global excitation index $\mathcal{I}$, some finite subset ${\mathcal{I}_h} \subset \mathcal{I}$ and solving only the equations associated with this subset of excitation indices. This procedure results in the so-called discrete coupled cluster equations.

\vspace{3mm}

\textbf{Discrete Coupled Cluster Equations:}~

Let the excitation index set $\mathcal{I}$ be defined through Definition \ref{def:Excitation_Index}, let $\mathcal{I}_h \subset \mathcal{I}$ denote any \emph{finite} subset of excitation indices, and let the excitation operators $\{\mathcal{X}_{\mu}\}_{\mu \in \mathcal{I}}$ be defined through Definition \ref{def:Excitation_Operator}. We seek a coefficient vector $\bt^{*}_h=\{\bt^*_{\mu}\}_{\mu \in \mathcal{I}_h} \in \ell^2(\mathcal{I}_h)$ such that for all $\mu \in {\mathcal{I}}_h$ we have
\begin{equation}\label{eq:CC_truncated}
	\left\langle\mathcal{X}_\mu \Psi_0, e^{-\mathcal{T}^*_h}H e^{\mathcal{T}^*_h} \Psi_0\right\rangle_{\widehat{\mathcal{H}}^{-1} \times \widehat{\mathcal{H}}^{1}}	=0, \quad \text{ where } \mathcal{T}^*_h= \sum_{\mu \in {\mathcal{I}_h}} \bt^*_{\mu} \mathcal{X}_{\mu}.
\end{equation}

The associated discrete ground state energy $\mathcal{E}^*_{h, \rm CC}$ is given by
\begin{equation}\label{eq:CC_energy_proj}
	\mathcal{E}^*_{h, \rm CC} := \left\langle\Psi_0, e^{-\mathcal{T}_h^*}He^{\mathcal{T}^*_h} \Psi_0\right\rangle_{\widehat{\mathcal{H}}^{-1} \times \widehat{\mathcal{H}}^{1}}, \quad \text{ where } \mathcal{T}^*_h= \sum_{\mu \in \mathcal{I}_h}\bt^*_{\mu} \mathcal{X}_{\mu}.
\end{equation}

The discrete CC equations will be the subject of further discussion in Section \ref{sec:6} where we will analyse their well-posedness for some specific choices of the excitation index subsets and $N$-particle basis sets. For the moment, we conclude this section with a remark on the nature of the solutions to these discrete equations.

\begin{remark}[Solutions to the Discrete Coupled Cluster Equations]~
	
	Consider the discrete coupled cluster equations \eqref{eq:CC_truncated}. As in the continuous case, there is a priori no reason for solutions of Equation \eqref{eq:CC_truncated} to be globally unique. Indeed, numerical experience confirms that solutions to Equation~\eqref{eq:CC_truncated} are very often \underline{not} unique. Nevertheless, in practice the discrete CC equations are solved very frequently by the quantum chemical community when performing electronic structure calculations, usually using some type of iterative Newton method, and it is \emph{hoped} that if one starts from a sufficiently accurate initial point, then the resulting solution $\bt_h^* \in \ell^2(\mathcal{I}_h) $ of Equation \eqref{eq:CC_truncated} approximates, in some sense, an exact solution $\bt^*$ of the continuous CC equations \eqref{eq:CC} that generates the ground state wave-function. Of course there are no mathematical guarantees that this procedure works at all, and the current reputation of coupled cluster methods as a `gold-standard' in quantum computational chemistry seems to be based mostly on successful empirical experience.
\end{remark}

Having introduced the continuous and discrete coupled cluster equations, the remainder of this article will be concerned with their (local) well-posedness analysis. We will first analyse the continuous coupled cluster equations~\eqref{eq:CC} in Section \ref{sec:5}, following which we will study a particular class of the discrete coupled cluster equations \eqref{eq:CC_truncated} in Section \ref{sec:6}.

	\section{Well-posedness of the Continuous Coupled Cluster Equations}\label{sec:5}~

Throughout this section, we assume the setting of Sections \ref{sec:2} and \ref{sec:4}, and we recall in particular the notion of excitation operators and the continuous coupled cluster equations. We begin by defining the so-called coupled cluster function, which will be the main object of study in this section.

\begin{definition}[Coupled Cluster function]\label{def:CC_function}~
	
	Let the excitation index set $\mathcal{I}$ be defined through Definition \ref{def:Excitation_Index} and let the excitation operators $\{\mathcal{X}_{\mu}\}_{\mu \in \mathcal{I}}$ be defined through Definition \ref{def:Excitation_Operator}.  We define the coupled cluster function $\mathcal{f} \colon \mathbb{V}\rightarrow \mathbb{V}^*$ as the mapping with the property that for all $\bt=\{\bt_{\mu}\}_{\mu \in \mathcal{I}}, \bs=\{\bs_{\mu}\}_{\mu \in \mathcal{I}} \in \mathbb{V}$ it holds that 
	\begin{align*}
		\big\langle \bs, \mathcal{f}(\bt) \big\rangle_{\mathbb{V} \times \mathbb{V}^*}:=\left \langle \sum_{\mu \in \mathcal{I}}\bs_{\mu}\mathcal{X}_{\mu}\Psi_0, e^{-\mathcal{T}}He^{\mathcal{T}} \Psi_0\right \rangle_{\widehat{\mathcal{H}}^1 \times \widehat{\mathcal{H}}^{-1}}, \qquad \text{ where }~ \mathcal{T}=\sum_{\mu \in \mathcal{I}} \bt_\mu \mathcal{X}_\mu.
	\end{align*}
\end{definition}

\begin{remark}[Justification of the Domain and Range of Coupled Cluster Function]\label{rem:CC_function}~
	
	Consider Definition \ref{def:CC_function} of the coupled cluster function. The fact that $\mathcal{f}$ is indeed a mapping from $\mathbb{V}$ to $\mathbb{V}^*$ is a direct consequence of the boundedness of the electronic Hamiltonian $H\colon \widehat{\mathcal{H}}^1 \rightarrow \widehat{\mathcal{H}}^{-1}$ and the exponential cluster operators $e^{\mathcal{T}} \colon \widehat{\mathcal{H}}^1 \rightarrow\widehat{\mathcal{H}}^1$ and $e^{-\mathcal{T}} \colon \widehat{\mathcal{H}}^{-1}\rightarrow\widehat{\mathcal{H}}^{-1}$. Indeed, for all $\bs= \{\bs_{\mu}\}_{\mu \in \mathcal{I}} \in \mathbb{V}$ and all $\bt= \{\bt_\mu\}_{\mu \in \mathcal{I}} \in \mathbb{V}$ it holds that
	\begin{align*}
		\big\vert \big\langle \bs, \mathcal{f}(\bt)\big\rangle_{\mathbb{V} \times \mathbb{V}^*} \big\vert &= \left \vert \left \langle \sum_{\mu \in \mathcal{I}}\bs_{\mu}\mathcal{X}_{\mu}\Psi_0, e^{-\mathcal{T}}He^{\mathcal{T}} \Psi_0\right \rangle_{\widehat{\mathcal{H}}^1 \times \widehat{\mathcal{H}}^{-1}} \right \vert\\[1em]
		&\leq \Big\Vert \sum_{\mu\in \mathcal{I}}\bs_{\mu} \mathcal{X}_{\mu} \Psi_0\Big\Vert_{\widehat{\mathcal{H}}^1} \Big\Vert e^{-\mathcal{T}}He^{\mathcal{T}} \Psi_0\Big\Vert_{\widehat{\mathcal{H}}^{-1}}\\[1em]
		&\leq \Vert \bs\Vert_{\mathbb{V}} \Vert e^{-\mathcal{T}}\Vert_{\widehat{\mathcal{H}}^{-1} \to \widehat{\mathcal{H}}^{-1}} \Vert H\Vert_{\widehat{\mathcal{H}}^{1} \to \widehat{\mathcal{H}}^{-1}} \Vert e^{\mathcal{T}} \Psi_0\Vert_{\widehat{\mathcal{H}}^{1}}.
	\end{align*}
\end{remark}

Equipped with Definition \ref{def:CC_function} of the coupled cluster function, let us point out that the continuous coupled cluster equations \eqref{eq:CC} can be re-written in the following weak form.

\vspace{3mm}

\textbf{Weak Form of the Continuous Coupled Cluster equations:}~

Let the excitation index set $\mathcal{I}$ be defined through Definition \ref{def:Excitation_Index}, let the Hilbert space of sequences $\mathbb{V} \subset \ell^2(\mathcal{I})$ be defined through Definition \ref{def:coeff_space}, and let the coupled cluster function $\mathcal{f}\colon \mathbb{V} \rightarrow \mathbb{V}^*$ be defined through Definition \ref{def:CC_function}. We seek a sequence $\bt =\{\bt_{\mu}\}_{\mu \in \mathcal{I}} \in \mathbb{V}$ such that for all sequences $\bs =\{\bs_{\mu}\}_{\mu \in \mathcal{I}} \in \mathbb{V}$ it holds that
\begin{equation}\label{eq:Galerkin_Full_CC}
	\langle \bs, \mathcal{f}(\bt)\rangle_{\mathbb{V} \times \mathbb{V}^*}=0.
\end{equation}        
As we shall see in Section \ref{sec:6}, this point of view will allow us to interpret the \emph{truncated} coupled cluster equations~\eqref{eq:CC_truncated} as Galerkin discretisations of Equation~\eqref{eq:Galerkin_Full_CC}, which will be useful for the purpose of the numerical analysis.

The following extremely significant result, proven in \cite{MR3021693}, establishes a precise relationship between zeros of the coupled cluster function defined through Definition \ref{def:CC_function} (i.e., solutions of the continuous CC equations \eqref{eq:Galerkin_Full_CC}) and intermediately normalised eigenfunctions of the electronic Hamiltonian defined through Equation \eqref{eq:Hamiltonian}.

\begin{theorem}[Relation between Coupled Cluster Zeros and Eigenfunctions of Electronic Hamiltonian]\label{thm:Yvon}~

	Let the coupled cluster function $\mathcal{f} \colon \mathbb{V} \rightarrow \mathbb{V}^*$ be defined through Definition \ref{def:CC_function} and let the electronic Hamiltonian be given by Equation \eqref{eq:Hamiltonian}. Then 
	\begin{enumerate}
		\item For any zero $\bt^* = \{\bt_{\mu}^*\}_{\mu \in \mathcal{I}}\in \mathbb{V}$ of the CC function, the function $\Psi^*=e^{\mathcal{T}^*}\Psi_0 \in \widehat{\mathcal{H}}^1$ with $\mathcal{T}^*=\sum_{\mu \in \mathcal{I}} \bt^*_\mu \mathcal{X}_\mu$ is an intermediately normalised eigenfunction of the electronic Hamiltonian. Moreover, the eigenvalue corresponding to the eigenfunction $\Psi^*$ coincides with the CC energy $\mathcal{E}_{\rm CC}^*$ generated by $\bt^*$ as defined through Equation \eqref{eq:CC_Energy}. 
		
		\item Conversely, for any intermediately normalised eigenfunction $\Psi^* \in \widehat{\mathcal{H}}^1$ of the electronic Hamiltonian, there exists $\bt^* = \{\bt_{\mu}^*\}_{\mu \in \mathcal{I}}\in \mathbb{V}$ such that $\bt^*$ is a zero of the CC function and $\Psi^*=e^{\mathcal{T}^*}\Psi_0 \in \widehat{\mathcal{H}}^1$ with $\mathcal{T}^*=\sum_{\mu \in \mathcal{I}} \bt^*_\mu \mathcal{X}_\mu$. Moreover, the CC energy $\mathcal{E}_{\rm CC}^*$ generated by $\bt^*$ as defined through Equation \eqref{eq:CC_Energy} coincides with the eigenvalue corresponding to the eigenfunction $\Psi^*$.
	\end{enumerate}
\end{theorem}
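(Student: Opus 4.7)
The plan is to prove both implications by exploiting two structural facts established earlier in the paper: first, the completeness of the basis $\mathcal{B}_{\wedge}=\{\Psi_0\}\cup\{\mathcal{X}_\mu\Psi_0:\mu\in\mathcal{I}\}$ in $\widehat{\mathcal{H}}^1$ (so its algebraic span is dense there, and any continuous linear functional vanishing on $\mathcal{B}_{\wedge}$ vanishes identically); and second, the fact from Theorem~\ref{pro:excit} that for any $\bt\in\mathbb{V}$, the exponential cluster operator $e^{\mathcal{T}}$ lies in the commutative subalgebra $\mathfrak{L}$ and is a bounded, invertible operator on both $\widehat{\mathcal{H}}^1$ and $\widehat{\mathcal{H}}^{-1}$, with inverse $e^{-\mathcal{T}}$.

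For the forward direction, I would fix a zero $\bt^*\in\mathbb{V}$ of the CC function and consider the element $G:=e^{-\mathcal{T}^*}He^{\mathcal{T}^*}\Psi_0\in\widehat{\mathcal{H}}^{-1}$. By hypothesis $\langle\mathcal{X}_\mu\Psi_0,G\rangle_{\widehat{\mathcal{H}}^1\times\widehat{\mathcal{H}}^{-1}}=0$ for every $\mu\in\mathcal{I}$, and by the definition \eqref{eq:CC_Energy} one has $\langle\Psi_0,G\rangle_{\widehat{\mathcal{H}}^1\times\widehat{\mathcal{H}}^{-1}}=\mathcal{E}_{\rm CC}^*$. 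Viewing $\mathcal{E}_{\rm CC}^*\Psi_0$ as an element of $\widehat{\mathcal{L}}^2\subset\widehat{\mathcal{H}}^{-1}$ via the canonical embedding and using the $\widehat{\mathcal{L}}^2$-orthonormality of $\mathcal{B}_{\wedge}$, the functional $G-\mathcal{E}_{\rm CC}^*\Psi_0\in\widehat{\mathcal{H}}^{-1}$ vanishes on every element of $\mathcal{B}_{\wedge}$, hence on a dense subspace, and therefore identically. Consequently $e^{-\mathcal{T}^*}He^{\mathcal{T}^*}\Psi_0=\mathcal{E}_{\rm CC}^*\Psi_0$ in $\widehat{\mathcal{H}}^{-1}$; applying the isomorphism $e^{\mathcal{T}^*}\colon\widehat{\mathcal{H}}^{-1}\to\widehat{\mathcal{H}}^{-1}$ to both sides yields $H\Psi^*=\mathcal{E}_{\rm CC}^*\Psi^*$ with $\Psi^*:=e^{\mathcal{T}^*}\Psi_0$. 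Intermediate normalisation then follows from the nilpotency of $\mathcal{T}^*$: one writes $e^{\mathcal{T}^*}\Psi_0=\Psi_0+\sum_{k=1}^{N}\tfrac{1}{k!}(\mathcal{T}^*)^k\Psi_0$, and each term in the sum belongs to $\widetilde{\mathcal{V}}$ because repeated application of excitation operators to $\Psi_0$ produces either zero or an element of $\mathcal{B}_{\wedge}\setminus\{\Psi_0\}$, which is $\widehat{\mathcal{L}}^2$-orthogonal to~$\Psi_0$.

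For the converse, given an intermediately normalised eigenfunction $\Psi^*\in\widehat{\mathcal{H}}^1$ with eigenvalue $E^*$, I would invoke the exponential parameterisation identity \eqref{thm:3} to obtain the unique $\bt^*\in\mathbb{V}$ with $\Psi^*=e^{\mathcal{T}^*}\Psi_0$. Acting on the eigenvalue equation $H\Psi^*=E^*\Psi^*$ with the bounded operator $e^{-\mathcal{T}^*}\colon\widehat{\mathcal{H}}^{-1}\to\widehat{\mathcal{H}}^{-1}$ yields the identity $e^{-\mathcal{T}^*}He^{\mathcal{T}^*}\Psi_0=E^*\Psi_0$ in $\widehat{\mathcal{H}}^{-1}$. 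Testing this identity against $\mathcal{X}_\mu\Psi_0$ for each $\mu\in\mathcal{I}$ and using $(\mathcal{X}_\mu\Psi_0,\Psi_0)_{\widehat{\mathcal{L}}^2}=0$ gives $\mathcal{f}_\mu(\bt^*)=0$, so $\bt^*$ is a zero of the CC function; testing instead against $\Psi_0$ and using $(\Psi_0,\Psi_0)_{\widehat{\mathcal{L}}^2}=1$ gives $\mathcal{E}_{\rm CC}^*=E^*$.

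The principal subtlety, and the only step that requires more than book-keeping, is the passage from the vanishing of a functional on each element of $\mathcal{B}_{\wedge}$ to its vanishing on all of $\widehat{\mathcal{H}}^1$. This rests on the completeness of $\mathcal{B}_{\wedge}$ recalled in Section~\ref{sec:2a} together with the continuity of the functional $G-\mathcal{E}_{\rm CC}^*\Psi_0$ on $\widehat{\mathcal{H}}^1$, which itself uses the continuity of $H\colon\widehat{\mathcal{H}}^1\to\widehat{\mathcal{H}}^{-1}$ and the boundedness of $e^{\pm\mathcal{T}^*}$ on the appropriate spaces. All remaining manipulations --- conjugating the Hamiltonian by $e^{\pm\mathcal{T}^*}$, expanding the exponential as a finite sum, and verifying intermediate normalisation --- are immediate consequences of the commutativity, nilpotency, and invertibility statements collected in Theorem~\ref{pro:excit}.
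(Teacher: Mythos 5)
Your argument is correct, and it is essentially the standard proof: test $e^{-\mathcal{T}^*}He^{\mathcal{T}^*}\Psi_0$ against the complete basis $\mathcal{B}_{\wedge}$ to conclude it equals $\mathcal{E}^*_{\rm CC}\Psi_0$, then use the invertibility of the exponential cluster operator on $\widehat{\mathcal{H}}^{\pm 1}$ and the exponential parameterisation \eqref{thm:3} for the converse. Note that the paper itself gives no proof of Theorem \ref{thm:Yvon} but defers to \cite{MR3021693}, where the argument proceeds along exactly these lines, so your write-up is a faithful self-contained version of that proof; the only points worth flagging explicitly are that the vanishing-on-a-dense-subspace step uses density of $\operatorname{span}\mathcal{B}_{\wedge}$ in the $\widehat{\mathcal{H}}^1$-norm, and that the resulting eigenvalue equation holds in the weak ($\widehat{\mathcal{H}}^{-1}$) sense, which is the sense used throughout the paper.
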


In other words every intermediately normalisable eigenfunction of the electronic Hamiltonian \eqref{eq:Hamiltonian} corresponds to a zero of the coupled cluster function defined through Definition \ref{def:CC_function} and vice-versa. The goal of our analysis in this section is to study the nature of these zeros of the coupled cluster function and, in particular, to derive sufficient conditions that guarantee the \emph{simplicity} of the zeros. Indeed, if we know that some $\bt^* \in \mathbb{V}$ is a simple zero of the coupled cluster function, then this will allow us to deduce \emph{local invertibility} of the coupled cluster function at $\bt^* \in \mathbb{V}$ and thereby derive both \emph{local uniqueness} and \emph{local residual-based} error estimates for the CC equations \eqref{eq:Galerkin_Full_CC}. Arguments of this nature are standard in the literature on non-linear numerical analysis (see, e.g., \cite[Proposition 2.1]{MR1213837} or \cite[Theorem 2.1]{MR1470227}) and are usually based on the invertibility of the Fr\'echet derivative of the non-linear function being studied. The next step in our analysis therefore will be to study carefully the Fr\'echet derivative of the coupled cluster function. Before proceeding with this analysis however, let us comment on the existing numerical analysis of the coupled cluster equation \eqref{eq:Galerkin_Full_CC}.

\begin{remark}[Existing Approaches in the Numerical Analysis of the CC equations \eqref{eq:Galerkin_Full_CC}]\label{rem:monotone}~

The existing literature on the numerical analysis of coupled cluster methods is rather sparse. The first numerical analysis of the single reference coupled cluster-- in the finite-dimensional setting-- is due to R. Schneider in \cite{Schneider_1}. The analysis carried out in \cite{Schneider_1} was then extended to the infinite-dimensional setting (as considered here) in the subsequent articles \cite{MR3021693} and \cite{MR3110488}. The former article showed that the mathematical objects used to formulate the coupled cluster method (such as excitation operators) are bounded operators on appropriate infinite-dimensional Hilbert spaces so that the coupled cluster equations can be stated in infinite-dimensions (prior to this article, the CC equations were \emph{always} written in a finite-dimensional setting). The article \cite{MR3110488} used these tools and the ideas developed in~\cite{Schneider_1} to perform a numerical analysis of the infinite-dimensional coupled cluster equations. Additional articles on the mathematical analysis of CC methods have since appeared, including \cite{laestadius2018analysis} which studies the so-called extended coupled cluster method, \cite{faulstich2019analysis} which studies the so-called tailored coupled cluster method, \cite{csirik2021coupled_2} which studies the finite-dimensional CC equations using topological degree theory, and \cite{faulstich2022coupled} which analyses the root structure of the CC equations using tools from algebraic geometry.

The aforementioned articles have two important features in common: First they are concerned with the (local) analysis of the `ground-state' zero of the coupled cluster function, i.e., with the zero $\bt^*_{\rm GS}$ such that
$e^{\mathcal{T}^*_{\rm GS}}\Psi_0= \Psi^*_{\rm GS}$. This of course makes sense since the vast majority of coupled cluster calculations are targeted at approximating the ground state energy of the electronic Hamiltonian.

Second and more importantly, the well-posedness analysis in all of the above articles is based on proving a  \emph{local, strong monotonicity} property of the coupled cluster function at $\bt^*_{\rm GS}$. Taking the example of the article \cite{MR3110488} whose notation closely aligns with ours, let the coupled cluster function $\mathcal{f} \colon \mathbb{V} \rightarrow \mathbb{V}^*$ be defined through Definition \ref{def:CC_function}. Then it is shown in \cite{MR3110488} that for $\delta >0$ sufficiently small, there exists a constant $\Gamma$ such that for all $\bw, \bs \in \mathbb{B}_{\delta}(\bt^*_{\rm GS}) \subset \mathbb{V}$ it holds that 
\begin{align}\label{eq:loc_mono}
	\left\langle \bw - \bs, \mathcal{f}(\bw) -\mathcal{f}(\bs)\right\rangle_{\mathbb{V} \times \mathbb{V}^*} \geq \Gamma \Vert \bw - \bs\Vert_{\mathbb{V}}.
\end{align}

If the constant $\Gamma$ can be shown to be strictly positive, then the local monotonicity property \eqref{eq:loc_mono} immediately yields local well-posedness of both the continuous coupled cluster equations as well as sufficiently rich Galerkin discretisations thereof\hspace{0.8mm}\footnote{Although this point of view is not taken in these articles, the local monotonicity condition \eqref{eq:loc_mono} essentially corresponds to proving that the coupled cluster Fr\'echet derivative at $\bt^*_{\rm GS}$ is a positive definite operator.}. Quasi-optimal error estimates for the CC energy can then also be derived using the dual weighted residual approach developed by Rannacher et al. \cite[Chapter 6]{MR1960405} . 

The main drawback of the above approach is that the actual local monotonicity constant $\Gamma$ derived from this analysis (see \cite[Theorem 3.4]{MR3110488}) is of the form:
\begin{subequations}
	\begin{eqnarray}\label{eq:loc_mono_2}
		\Gamma &= \omega \gamma -   \left\Vert \mathcal{T}^*_{\rm GS} - (\mathcal{T}^*_{\rm GS})^{\dagger}\right\Vert_{\widehat{\mathcal{H}}^1\rightarrow  \widehat{\mathcal{H}}^{1}}\; \left\Vert H- \mathcal{E}_{\rm GS}^*\right\Vert_{\widehat{\mathcal{H}}^1\rightarrow  \widehat{\mathcal{H}}^{-1}}  - \mathcal{O}\left(\Vert \bt^*_{\rm GS}\Vert^2_{\mathbb{V}}\right)\\[0.5em] \label{eq:loc_mono_3}
		&\geq \omega \gamma - \left(\beta +\beta^{\dagger}\right) \left\Vert \bt ^{*}_{\rm GS}\right\Vert_{\mathbb{V}}\left\Vert H- \mathcal{E}_{\rm GS}^*\right\Vert_{\widehat{\mathcal{H}}^1\rightarrow  \widehat{\mathcal{H}}^{-1}} - \mathcal{O}\left(\Vert \bt^*_{\rm GS}\Vert^2_{\mathbb{V}}\right)
	\end{eqnarray}
\end{subequations}
where $\gamma>0$ denotes the coercivity constant of the shifted electronic Hamiltonian $H- \mathcal{E}^*_{\rm GS}$ on $\{\Psi_{\rm GS}^*\}^{\perp}$, the constant $\omega \in (0, 1)$ is a prefactor depending on $\Vert \Psi_0 - \Psi_{\rm GS}^* \Vert_{\widehat{\mathcal{L}}^2}$,  and $\beta, \beta^{\dagger}$ are the continuity constants of the mappings $\mathbb{V}\ni \bt \mapsto \mathcal{T}\colon \widehat{\mathcal{H}}^1\rightarrow  \widehat{\mathcal{H}}^{1}$ and  and $\mathbb{V}\ni \bt \mapsto \mathcal{T}^{\dagger}\colon \widehat{\mathcal{H}}^1\rightarrow  \widehat{\mathcal{H}}^{1}$ respectively as given in Theorem \ref{pro:excit}.

Consequently, the constant $\Gamma$ is positive provided that $\left\Vert \bt ^{*}_{\rm GS}\right\Vert_{\mathbb{V}}$ is small enough. However, according to the theoretical analysis in \cite{MR3021693}, the constants $\beta, \beta^{\dagger}$ grow \emph{combinatorially} in the number of electrons $N$ in the system, and thus as soon as $N\approx 10$ or larger, the lower bound \eqref{eq:loc_mono_3} for the constant $\Gamma$ is no longer positive. Similar issues arise in the local monotonicity constants derived in the other articles~\cite{faulstich2019analysis} and \cite{laestadius2018analysis}.

To make matters worse, even if we rely on the sharper Inequality \eqref{eq:loc_mono_2}, numerical experiments involving small, relatively well-behaved molecules for which it is well-known (from numerical experience) that the coupled cluster method works well, reveal that (see Table \ref{table1} below)
\begin{align*}
	\left\Vert \mathcal{T}^*_{\rm GS} - (\mathcal{T}^*_{\rm GS})^{\dagger}\right\Vert_{\widehat{\mathcal{H}}^1\rightarrow  \widehat{\mathcal{H}}^{1}}\; \left\Vert H- \mathcal{E}_{\rm GS}^*\right\Vert_{\widehat{\mathcal{H}}^1\rightarrow  \widehat{\mathcal{H}}^{-1}} > \gamma.
\end{align*}
In other words, the assumptions required to establish local strong monotonicity of the coupled cluster function, namely, smallness of the amplitude vector norm $\left\Vert \bt ^{*}_{\rm GS}\right\Vert_{\mathbb{V}}$ seem very restrictive and not satisfied in many practical examples. As a consequence, the hope of obtaining \emph{quantitative} a posteriori error estimates for the coupled cluster equations seems difficult to achieve. It is our purpose in this article to remedy this situation by, as a first step, filling these gaps in the existing well-posedness analysis.

\begin{table}[h!]
	\centering
	\begin{tabular}{||c| c| c| c| |c|c||} 
		\hline \hline
		Molecule & \shortstack{Coercivity \\ constant $\gamma$} & $\Vert \bt^*_{\rm FCI}\Vert_{\mathbb{V}} $ &  \shortstack{Monotonicity\\ constant $\Gamma$ \\from Eq. \eqref{eq:loc_mono_2}} & \shortstack{Hartree-Fock\\ Energy Error \\ (Hartree)} &  \shortstack{CCSD\\ Energy Error \\ (Hartree)}\\ [0.5ex] 
		\hline\hline
		{\rm BeH2} & 0.3257 & 0.2343 &  \hphantom{-}0.0363& $3.50\times 10^{-2}$ & $3.83\times 10^{-4}$\\ 
		{\rm BH3} & 0.2903 & 0.2844 & {\color{red}-0.0950}& $5.40\times 10^{-2}$ &  $3.74\times 10^{-4}$\\
		\rm{HF} & 0.3010 & 0.2038 & {\color{red}-0.0083} & $2.81\times 10^{-2}$&$3.02\times 10^{-5}$\\
		\rm{H2O} & 0.3471 & 0.2687 &  \hphantom{-}0.0249 & $5.01\times 10^{-2}$&$1.18\times 10^{-4}$\\ 
		\rm{LiH} & 0.2617 & 0.1792 & {\color{red}-0.0065} & $2.04\times 10^{-2}$&$1.14\times 10^{-5}$\\
		\rm{NH3} & 0.3868 & 0.3074 & {\color{red}-0.0325} & $6.61\times 10^{-2}$&$2.18\times 10^{-4}$\\[1ex] 
		\hline\hline
	\end{tabular}
	\vspace{2mm}
	\caption{Examples of numerically computed local monotonicity constants for a collection of small molecules at equilibrium geometries. The calculations were performed in STO-6G basis sets with the exception of the HF and LiH molecules for which 6-31G basis sets were used. In all cases, the Full-CI solution was taken as the reference solution. To simplify calculations, the canonical $\widehat{\mathcal{H}}^1$ norm was replaced with an equivalent norm induced by the diagonal part of the shifted Full-CI Hamiltonian.}
	\label{table1}
\end{table}
\end{remark}

We begin our analysis proper with the following proposition whose essence seems known (c.f., \cite[Theorem 4.16]{Schneider_1}, \cite[Lemma 3.1]{MR3110488} and \cite[Lemma 4.6]{csirik2021coupled_1}) but that has not been expressed in the current form in the existing literature. 
\begin{proposition}[Coupled Cluster Fr\'echet Derivative] \label{prop:CC_der}~		

Let the excitation index set $\mathcal{I}$ be defined through Definition \ref{def:Excitation_Index}, let the excitation operators $\{\mathcal{X}_{\mu}\}_{\mu \in \mathcal{I}}$ be defined through Definition \ref{def:Excitation_Operator}, and let the coupled cluster function $\mathcal{f} \colon \mathbb{V} \rightarrow \mathbb{V}^* $ be defined through Definition \ref{def:CC_function}. Then, 

\begin{itemize}
	\item For any $\bt =\{\bt_{\mu}\}_{\mu \in \mathcal{I}}\in \mathbb{V}$, the Fr\'echet derivative $\mD \mathcal{f}(\bt) \colon \mathbb{V}\rightarrow \mathbb{V}^*$ of the coupled cluster function $\mathcal{f} $ at $\bt$ is the mapping with the property that for all $\bs, \bw \in \mathbb{V}$ with $\bs=\{\bs_{\nu}\}_{\nu \in \mathcal{I}}$ and $\bw=\{\bw_{\mu}\}_{\mu \in \mathcal{I}}$ it holds that
	\begin{equation}\label{eq:CC_Jac_1}
		\left \langle  \bw, \mD \mathcal{f}(\bt) \bs \right\rangle_{\mathbb{V} \times \mathbb{V}^*} = \left \langle  \sum_{\mu \in \mathcal{I}} \bw_{\mu} \mathcal{X}_\mu \Psi_0, e^{-\mathcal{T}}\comm{H}{\sum_{\nu \in \mathcal{I}} \bs_{\nu} \mathcal{X}_\nu} e^{\mathcal{T}}\Psi_0\right \rangle_{\widehat{\mathcal{H}}^1 \times \widehat{\mathcal{H}}^{-1}},
	\end{equation}
	where $[\cdot, \cdot]$ denotes the commutator and $\mathcal{T}:=\sum_{\mu \in \mathcal{I}} \bt_{\mu} \mathcal{X}_\mu$.

	\item $\mathcal{f} \colon \mathbb{V} \rightarrow \mathbb{V}^*$ is a $\mathscr{C}^{\infty}$ mapping.
\end{itemize}
\end{proposition}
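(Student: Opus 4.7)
The plan is to compute the derivative directly from the definition, exploiting two structural facts from Theorem \ref{pro:excit}: the pairwise commutativity of cluster operators in $\mathfrak{L}$, and the nilpotency $\mathcal{S}^{N+1} \equiv 0$ of any cluster operator.

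First, I fix $\bt \in \mathbb{V}$ with associated cluster operator $\mathcal{T}$, pick an arbitrary perturbation $\bs \in \mathbb{V}$ with cluster operator $\mathcal{S}$, and write
\begin{align*}
e^{-(\mathcal{T}+\mathcal{S})} H\, e^{\mathcal{T}+\mathcal{S}} = e^{-\mathcal{S}}\, e^{-\mathcal{T}} H\, e^{\mathcal{T}}\, e^{\mathcal{S}},
\end{align*}
which is valid because $[\mathcal{T},\mathcal{S}] = 0$. Since $\mathcal{S}^{N+1} = 0$, each exponential $e^{\pm \mathcal{S}}$ is a \emph{finite} polynomial $\sum_{k=0}^{N} (\pm 1)^k \mathcal{S}^k/k!$, so the right-hand side is literally a polynomial in $\mathcal{S}$. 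Separating its zeroth- and first-order terms yields
\begin{align*}
e^{-(\mathcal{T}+\mathcal{S})} H\, e^{\mathcal{T}+\mathcal{S}} = e^{-\mathcal{T}} H\, e^{\mathcal{T}} + \bigl[e^{-\mathcal{T}} H\, e^{\mathcal{T}},\, \mathcal{S}\bigr] + \mathcal{R}(\mathcal{S}),
\end{align*}
where $\mathcal{R}(\mathcal{S})$ collects all monomials of degree $\geq 2$ in $\mathcal{S}$.

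Second, I identify the linear term with the claimed expression. Because $\mathcal{S}$ commutes with $\mathcal{T}$, and hence with $e^{\pm \mathcal{T}}$, a direct algebraic manipulation gives
\begin{align*}
\bigl[e^{-\mathcal{T}} H\, e^{\mathcal{T}},\, \mathcal{S}\bigr] = e^{-\mathcal{T}} [H, \mathcal{S}]\, e^{\mathcal{T}},
\end{align*}
which is exactly the integrand in Equation \eqref{eq:CC_Jac_1} after testing against $\mathcal{X}_\mu \Psi_0$ and summing against $\bw$. To conclude Fréchet differentiability, I must bound $\langle \bw,\mathcal{R}(\mathcal{S})\rangle_{\mathbb{V}\times\mathbb{V}^*}$ by $C(\bt)\,\Vert \bw\Vert_{\mathbb{V}}\Vert \bs\Vert_{\mathbb{V}}^2$. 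This follows from the bounds in Theorem \ref{pro:excit}: each factor $\mathcal{S}$ acting on $\widehat{\mathcal{H}}^1$ contributes a factor $\beta \Vert \bs\Vert_{\mathbb{V}}$, each factor $\mathcal{S}$ acting on $\widehat{\mathcal{H}}^{-1}$ contributes a factor $\beta^\dagger \Vert \bs\Vert_{\mathbb{V}}$ (using the $\widehat{\mathcal{H}}^{-1} \to \widehat{\mathcal{H}}^{-1}$ extension), together with the continuity of $H\colon \widehat{\mathcal{H}}^1 \to \widehat{\mathcal{H}}^{-1}$ from Equation \eqref{eq:contin} and the analogous bound $\Vert \bw \Vert_{\mathbb{V}} \geq \Vert \sum_\mu \bw_\mu \mathcal{X}_\mu \Psi_0\Vert_{\widehat{\mathcal{H}}^1}$ from Theorem \ref{pro:excit}. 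The bookkeeping must be done in the correct order so that at every step one composes a map into $\widehat{\mathcal{H}}^1$ with a map out of $\widehat{\mathcal{H}}^{1}$; this is the only delicate part of the argument.

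Third, for the smoothness assertion, I observe that by nilpotency the Baker–Campbell–Hausdorff series terminates:
\begin{align*}
e^{-\mathcal{T}} H\, e^{\mathcal{T}} = \sum_{k=0}^{N} \frac{1}{k!}\,\underbrace{[[\cdots[H,\mathcal{T}],\cdots,\mathcal{T}]}_{k \text{ nested brackets}},
\end{align*}
so each component $\mathcal{f}_\mu(\bt) = \langle \mathcal{X}_\mu \Psi_0, e^{-\mathcal{T}} H\, e^{\mathcal{T}} \Psi_0\rangle$ is a \emph{polynomial} of bounded degree in the scalar entries $\{\bt_\nu\}_{\nu\in\mathcal{I}}$ (and in fact, the two-body structure of $H$ truncates the series at $k=4$, although this finer fact is not needed). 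A polynomial between Banach spaces whose continuity has already been established, as in Remark \ref{rem:CC_function}, is automatically $\mathscr{C}^\infty$, completing the proof.

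The main obstacle is the functional–analytic bookkeeping in the second step: the commutator $[H,\mathcal{S}]$ only makes sense as a map $\widehat{\mathcal{H}}^1 \to \widehat{\mathcal{H}}^{-1}$, so the intertwining identity $[e^{-\mathcal{T}} H e^{\mathcal{T}}, \mathcal{S}] = e^{-\mathcal{T}}[H,\mathcal{S}]e^{\mathcal{T}}$ and the control of the quadratic remainder must invoke the $\widehat{\mathcal{H}}^{-1} \to \widehat{\mathcal{H}}^{-1}$ extension of cluster operators rather than just their $\widehat{\mathcal{H}}^1 \to \widehat{\mathcal{H}}^1$ boundedness. Once this is set up carefully, everything reduces to algebraic manipulations of finite sums.
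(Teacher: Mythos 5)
Your argument is correct, but it takes a genuinely different route from the paper. The paper first computes the Gateaux derivative as a limit in $h$, then separately verifies that the resulting operator (via the auxiliary map $\mathcal{A}(\bt)\colon\widetilde{\mathcal{V}}\rightarrow\widehat{\mathcal{H}}^{-1}$) is bounded, and finally upgrades Gateaux to Fr\'echet differentiability by proving that $\bt\mapsto \mD\mathcal{f}(\bt)$ is continuous; smoothness is then obtained by iterating this scheme with nested commutators, leaning on the fact that the exponential is a local $\mathscr{C}^{\infty}$ diffeomorphism on the cluster algebra. You instead exploit nilpotency to write $e^{-\mathcal{S}}\big(e^{-\mathcal{T}}He^{\mathcal{T}}\big)e^{\mathcal{S}}$ as a \emph{finite} polynomial in $\mathcal{S}$, identify the linear term exactly as $e^{-\mathcal{T}}[H,\mathcal{S}]e^{\mathcal{T}}$ via commutativity on both $\widehat{\mathcal{H}}^{1}$ and $\widehat{\mathcal{H}}^{-1}$, and bound the remainder by $C(\bt)\Vert\bw\Vert_{\mathbb{V}}\Vert\bs\Vert_{\mathbb{V}}^{2}$, which delivers Fr\'echet differentiability in one shot without ever invoking the Gateaux-plus-continuity theorem; your smoothness argument (a continuous polynomial map of bounded degree between Banach spaces is $\mathscr{C}^{\infty}$) likewise replaces the paper's appeal to the diffeomorphism property. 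Your route is more elementary and gives explicit quadratic control of the remainder, which the paper's proof does not exhibit; the paper's route generalises more readily to settings where the perturbation is not nilpotent. Two small points to tidy up: you should state explicitly that the candidate derivative itself is a bounded map $\mathbb{V}\rightarrow\mathbb{V}^{*}$ (the same Cauchy--Schwarz bookkeeping with a single factor of $\mathcal{S}$, i.e.\ the estimate the paper performs for $\mathcal{A}(\bt)$), since this is part of the definition of the Fr\'echet derivative; and the nested-commutator expansion obtained from nilpotency alone terminates at order $2N$ (terms $\mathcal{T}^{j}H\mathcal{T}^{k-j}$ survive as long as both powers are at most $N$), not at order $N$ as written --- harmless for your argument, which only needs finiteness, and indeed the two-body truncation at order four that you mention as an aside is the sharper statement.
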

\begin{proof}
We start with the proof of the first assertion. This portion of the proof will proceed in two steps:

\begin{itemize}
	\item We will obtain an expression for the Gateaux derivative $\mD\mathcal{f}(\bt), ~t\in \mathbb{V}$ of the coupled cluster function $\mathcal{f}$, and we will show that this agrees with the expression offered by Equation \eqref{eq:CC_Jac_1}.
	
	\item We will show that the Gateaux derivative is continuous as a function of $\bt$, i.e., the mapping $\bt \mapsto \mD\mathcal{f}(t) \colon \mathbb{V} \rightarrow \mathbb{V}^*$ is continuous.
\end{itemize}

Let $\bt, \bs \in \mathbb{V}$ be arbitrary. Thanks to Remark \ref{rem:CC_function}, we observe that for any $h \geq 0$ it holds that $\mathcal{f}(\bt + h \bs) \in \mathbb{V}^*$. It follows that for any $h > 0$ and any $\bw \in \mathbb{V}$ we have that 
\begin{align*}
	\langle \bw, \mathcal{f}(\bt + h\bs)- \mathcal{f}(\bt)\rangle_{\mathbb{V}\times \mathbb{V}^*} &= \left \langle \sum_{\mu \in \mathcal{I}}\bw_{\mu}\mathcal{X}_{\mu}\Psi_0, \left(e^{-\mathcal{T} -h \mathcal{S}}He^{\mathcal{T}+ h\mathcal{S}} -e^{-\mathcal{T}}He^{\mathcal{T}}\right)\Psi_0\right \rangle_{\widehat{\mathcal{H}}^1 \times \widehat{\mathcal{H}}^{-1}}\\
	&= \left \langle \sum_{\mu \in \mathcal{I}}\bw_{\mu}\mathcal{X}_{\mu}\Psi_0, e^{-\mathcal{T}}\left(e^{-h\mathcal{S}}He^{h\mathcal{S}} -H\right)e^{\mathcal{T}}\Psi_0\right \rangle_{\widehat{\mathcal{H}}^1 \times \widehat{\mathcal{H}}^{-1}},
\end{align*}
where we have denoted $\mathcal{S}:= \sum_{\mu \in \mathcal{I}} \bs_{\mu} \mathcal{X}_\mu$ and we have used the fact that $\mathcal{T}$ and $\mathcal{S}$ commute (see the first assertion of Theorem~\ref{pro:excit}).

As a consequence, using once again the commutativity of $\mathcal{T}$ and $\mathcal{S}$ together with the power series expansion of the exponential cluster operator, we deduce that
\begin{align}\nonumber
	\lim_{h \to 0}  \frac{\langle \bw, \mathcal{f}(\bt + h\bs)- \mathcal{f}(\bt)\rangle_{\mathbb{V}\times \mathbb{V}^*}}{h} &= \left \langle \sum_{\mu \in \mathcal{I}}\bw_{\mu}\mathcal{X}_{\mu}\Psi_0, e^{-\mathcal{T}}\left(-\mathcal{S}H + H\mathcal{S}\right)e^{\mathcal{T}}\Psi_0\right \rangle_{\widehat{\mathcal{H}}^1 \times \widehat{\mathcal{H}}^{-1}}\\
	&=\left \langle \sum_{\mu \in \mathcal{I}}\bw_{\mu}\mathcal{X}_{\mu}\Psi_0, e^{-\mathcal{T}}\comm{H}{\sum_{\nu \in \mathcal{I}} \bs_{\nu} \mathcal{X}_\nu}e^{\mathcal{T}}\Psi_0\right \rangle_{\widehat{\mathcal{H}}^1 \times \widehat{\mathcal{H}}^{-1}}.\label{eq:Gateaux}
\end{align}

In order to show that the expression offered by Equation \eqref{eq:Gateaux} defines the Gateaux derivative $\mD \mathcal{f}(t) \colon \mathbb{V} \rightarrow \mathbb{V}^*$, we must~show that this operator is bounded. Recalling from Definition \ref{def:V_K}, the subspace $\widetilde{\mathcal{V}} \subset \widehat{\mathcal{H}}^1$ as the orthogonal complement of $\{\Psi_0\}$, let us therefore define $\mathcal{A}(\bt) \colon \widetilde{\mathcal{V}} \rightarrow \widehat{\mathcal{H}}^{-1}$ as
\begin{align*}
	\forall {\Phi} \in \widehat{\mathcal{H}}^{1}, ~~\forall\mathcal{S}\Psi_0  \in \widetilde{\mathcal{V}}  ~ \text{ with } \mathcal{S}=& \sum_{\mu}{\bs}_{\mu}\mathcal{X}_{\mu}\colon \quad \left \langle \Phi, \mathcal{A}(\bt) \mathcal{S}\Psi_0\right \rangle_{\widehat{\mathcal{H}}^1 \times \widehat{\mathcal{H}}^{-1}}  :=\left \langle \Phi, e^{-\mathcal{T}}\comm{H}{\mathcal{S}}e^{\mathcal{T}}\Psi_0\right \rangle_{\widehat{\mathcal{H}}^1 \times \widehat{\mathcal{H}}^{-1}}.
\end{align*}

We claim that $\mathcal{A}(\bt)$ defines a bounded linear operator. Indeed, a direct calculation shows that $\forall {\Phi} \in \widehat{\mathcal{H}}^{1}$ and $\forall\mathcal{S}\Psi_0  \in \widetilde{\mathcal{V}}  ~ \text{ with } \mathcal{S}= \sum_{\mu}{\bs}_{\mu}\mathcal{X}_{\mu}$ we have
\begin{align*}
	\big \vert\left \langle \Phi , \mathcal{A}(\bt) \mathcal{S}\Psi_0\right \rangle_{\widehat{\mathcal{H}}^1 \times \widehat{\mathcal{H}}^{-1}} \big \vert & = \left\vert\left \langle \Phi, e^{-\mathcal{T}}H\mathcal{S}e^{\mathcal{T}}\Psi_0\right \rangle_{\widehat{\mathcal{H}}^1 \times \widehat{\mathcal{H}}^{-1}} - \left \langle \Phi, e^{-\mathcal{T}}\mathcal{S}He^{\mathcal{T}}\Psi_0\right \rangle_{\widehat{\mathcal{H}}^1 \times \widehat{\mathcal{H}}^{-1}}\right \vert\\[0.5em]
	&\leq \left\Vert  \Phi\right\Vert_{\widehat{\mathcal{H}}^{1}} \left\Vert  e^{-\mathcal{T}}\right\Vert_{\widehat{\mathcal{H}}^{-1}\to \widehat{\mathcal{H}}^{-1}} \left\Vert  H\right\Vert_{\widehat{\mathcal{H}}^{1}\to \widehat{\mathcal{H}}^{-1}}\left\Vert  e^{\mathcal{T}}\right\Vert_{\widehat{\mathcal{H}}^{1}\to \widehat{\mathcal{H}}^{1}}\left\Vert  \mathcal{S}\Psi_0\right\Vert_{\widehat{\mathcal{H}}^{1}} \\[0.5em] 
	&+\left\Vert  \mathcal{S}^{\dagger}\Phi\right\Vert_{\widehat{\mathcal{H}}^{1}} \left\Vert  e^{-\mathcal{T}}\right\Vert_{\widehat{\mathcal{H}}^{-1}\to \widehat{\mathcal{H}}^{-1}} \left\Vert  H\right\Vert_{\widehat{\mathcal{H}}^{1}\to \widehat{\mathcal{H}}^{-1}}\left\Vert  e^{\mathcal{T}}\Psi_0\right\Vert_{\widehat{\mathcal{H}}^{1}},
\end{align*}
where we have used the fact that the cluster operators $e^{\mathcal{T}}$ and $\mathcal{S}$ commute. Next, let us observe that by definition of the cluster operator $\mathcal{S}$ and the norm $\Vert \cdot \Vert_{\mathbb{V}}$, it holds that $\Vert \mathcal{S} \Psi_0 \Vert_{\widehat{\mathcal{H}}^{1}} = \Vert \bs \Vert_{\mathbb{V}}$. Consequently, recalling the continuity properties of cluster operators from Theorem \ref{pro:excit} we deduce that
\begin{align*}
	\left\Vert  \mathcal{S}^{\dagger}\Phi\right\Vert_{\widehat{\mathcal{H}}^{1}}  \leq \beta^{\dagger} \Vert \bs\Vert_{\mathbb{V}} \Vert \Phi\Vert_{\widehat{\mathcal{H}}^{1}}= \beta^{\dagger} \Vert \mathcal{S}\Psi_0\Vert_{\widehat{\mathcal{H}}^{1}} \Vert \mathcal{W}\Psi_0\Vert_{\widehat{\mathcal{H}}^{1}},
\end{align*}
where the constant $\beta^{\dagger} >0$ is independent of $\mathcal{S}$.

Collecting terms now shows that $\mathcal{A}(\bt) \colon \widetilde{\mathcal{V}}\rightarrow \widehat{\mathcal{H}}^{-1}$ is indeed bounded, and therefore the Gateaux derivative $\mD \mathcal{f}(\bt) \colon \mathbb{V} \rightarrow \mathbb{V}^*$ is well-defined according to the expression offered by Equation \eqref{eq:Gateaux}. Since $\bt \in \mathbb{V}$ was arbitrary, the coupled cluster function $\mathcal{f}$ is everywhere Gateaux differentiable.

It remains to prove that the Gateaux derivative $\mD \mathcal{f}(\bt)$ is in fact a Fr\'echet derivative. To this end, it suffices to show that the mapping $\mathbb{V}\ni \bt \mapsto \mD \mathcal{f}(\bt) \colon \mathbb{V} \rightarrow \mathbb{V}^*$ is continuous. To this end, let $\{\bt_n\}_{n \in \mathbb{N}} \subset \mathbb{V}$ be a sequence that converges to $\bt$. It follows that
\begin{align*}
	&\lim_{n \to \infty}\Vert \mD \mathcal{f}(\bt) - \mD \mathcal{f}(\bt_n)\Vert_{\mathbb{V} \to \mathbb{V}^*} = \lim_{n \to \infty} \sup_{\substack{\bs \in \mathbb{V}\\ \Vert \bs \Vert_{\mathbb{V}} =1}} \sup_{\substack{\bw \in \mathbb{V}\\ \Vert \bw \Vert_{\mathbb{V}} =1}} \vert \left \langle \bw, \mD \mathcal{f}(\bt)\bs  - \mD \mathcal{f}(\bt_n)\bs\right \rangle_{\mathbb{V}\times \mathbb{V}^*} \vert\\[0.5em]
	&=  \lim_{n \to \infty} \sup_{\substack{\bs \in \mathbb{V}\\ \Vert \bs \Vert_{\mathbb{V}} =1}} \sup_{\substack{\bw \in \mathbb{V}\\ \Vert \bw \Vert_{\mathbb{V}} =1}}\left \vert\left \langle  \sum_{\mu \in \mathcal{I}} \bw_{\mu} \mathcal{X}_\mu \Psi_0, e^{-\mathcal{T}}\comm\Big{H}{\sum_{\nu \in \mathcal{I}} \bs_{\nu} \mathcal{X}_\nu} e^{\mathcal{T}}\Psi_0-e^{-\mathcal{T}_n}\comm\Big{H}{\sum_{\nu \in \mathcal{I}} \bs_{\nu} \mathcal{X}_\nu} e^{\mathcal{T}_n}\Psi_0\right \rangle_{\widehat{\mathcal{H}}^1 \times \widehat{\mathcal{H}}^{-1}}\right \vert,
\end{align*}

where for all $n \in \mathbb{N}$ we denote  $\mathcal{T}_n:= \sum_{\mu \in \mathcal{I}} (\bt_n)_{\mu} \mathcal{X}_\mu$. Adding and subtracting suitable terms yields the inequality
\begin{align*}
	&\lim_{n \to \infty}\Vert \mD \mathcal{f}(t) - \mD \mathcal{f}(t_n)\Vert_{\mathbb{V} \to \mathbb{V}^*} \\[0.75em]
	\leq &\lim_{n \to \infty} \sup_{\substack{\bs \in \mathbb{V}\\ \Vert \bs \Vert_{\mathbb{V}} =1}} \sup_{\substack{\bw \in \mathbb{V}\\ \Vert \bw \Vert_{\mathbb{V}} =1}}\left \vert\left \langle  \sum_{\mu \in \mathcal{I}} \bw_{\mu} \mathcal{X}_\mu \Psi_0, \left(e^{-\mathcal{T}} - e^{-\mathcal{T}_n}\right)\comm{H}{\sum_{\nu \in \mathcal{I}} \bs_{\nu} \mathcal{X}_\nu} e^{\mathcal{T}}\Psi_0\right \rangle_{\widehat{\mathcal{H}}^1 \times \widehat{\mathcal{H}}^{-1}}\right \vert\\[1.5em] 
	+&\lim_{n \to \infty} \sup_{\substack{\bs \in \mathbb{V}\\ \Vert \bs \Vert_{\mathbb{V}} =1}} \sup_{\substack{\bw \in \mathbb{V}\\ \Vert \bw \Vert_{\mathbb{V}} =1}}\left \vert\left \langle  \sum_{\mu \in \mathcal{I}} \bw_{\mu} \mathcal{X}_\mu \Psi_0, e^{-\mathcal{T}_n}\comm{H}{\sum_{\nu \in \mathcal{I}} \bs_{\nu} \mathcal{X}_\nu} \left(e^{\mathcal{T}}-e^{\mathcal{T}_n}\right) \Psi_0\right \rangle_{\widehat{\mathcal{H}}^1 \times \widehat{\mathcal{H}}^{-1}}\right \vert\\[1.5em]
	\leq &\lim_{n \to \infty} \sup_{\substack{\bs \in \mathbb{V}\\ \Vert \bs \Vert_{\mathbb{V}} =1}} \left \Vert \left(e^{-\mathcal{T}} - e^{-\mathcal{T}_n}\right)\comm{H}{\sum_{\nu \in \mathcal{I}} \bs_{\nu} \mathcal{X}_\nu} e^{\mathcal{T}}\Psi_0\right \Vert_{\widehat{\mathcal{H}}^{-1} }\\
	+&\lim_{n \to \infty} \sup_{\substack{\bs \in \mathbb{V}\\ \Vert \bs \Vert_{\mathbb{V}} =1}} \left \Vert e^{-\mathcal{T}_n}\comm{H}{\sum_{\nu \in \mathcal{I}} \bs_{\nu} \mathcal{X}_\nu} \left(e^{\mathcal{T}}-e^{\mathcal{T}_n}\right) \Psi_0\right \Vert_{\widehat{\mathcal{H}}^{-1}},
\end{align*}
where we have used the fact that $\Vert \bw \Vert_{\mathbb{V}}= \left\Vert \sum_{\mu \in \mathcal{I}} \bw_{\mu} \mathcal{X}_{\mu} \Psi_0\right\Vert_{\widehat{\mathcal{H}}^1}$ by definition.

We can now use the fact that the exponential cluster operator is a local $\mathscr{C}^{\infty}$ diffeomorphism on the algebra of cluster operators (see Theorem \ref{pro:excit}) together with the boundedness properties of the Hamiltonian $H$ and excitation operators to deduce that both of the above limits are zero. Thus, $\lim_{n \to \infty}\Vert \mD \mathcal{f}(\bt) - \mD \mathcal{f}(\bt_n)\Vert_{\mathbb{V} \to \mathbb{V}^*}= 0,$
which shows that $\mD \mathcal{f}(t) \colon \mathbb{V} \rightarrow \mathbb{V}^*$ as defined through Equation \eqref{eq:CC_Jac_1} is indeed the Fr\'echet derivative of the coupled cluster function $\mathcal{f} \colon \mathbb{V} \rightarrow \mathbb{V}^*$ at $\bt \in \mathbb{V}$.

In order to complete the proof of this proposition, we must demonstrate that the second assertion also holds, namely, that $\mathcal{f}$ is a $\mathscr{C}^{\infty}$ mapping from $\mathbb{V}$ to $\mathbb{V}^*$. To this end, it is sufficient to observe that higher order Gateaux derivatives of the coupled cluster function can be computed exactly as the first order Gateaux derivative given by Equation~\eqref{eq:Gateaux} with the single commutator being replaced by nested commutators. The fact that these Gateaux derivatives are also Fr\'echet derivatives is deduced in an identical fashion by making use of the fact that exponential cluster operator is a local $\mathscr{C}^{\infty}$ diffeomorphism. This completes the proof.
\end{proof}

Proposition \ref{prop:CC_der} has a number of important consequences that we now state. For the first result, let us recall from Theorem \eqref{thm:Yvon} that every zero $\bt^* \in \mathbb{V}$ of the coupled cluster function is associated with an intermediately normalised eigenfunction $\Psi^* \in \widehat{\mathcal{H}}^1$ of the electronic Hamiltonian $H \colon \widehat{\mathcal{H}}^1 \rightarrow \widehat{\mathcal{H}}^{-1}$ defined through Equation \eqref{eq:Hamiltonian}. 

\begin{corollary}[Coupled Cluster Fr\'echet Derivative at Zeros of the Coupled Cluster Function]\label{cor:CC_der}~

Let the excitation index set $\mathcal{I}$ be defined through Definition \ref{def:Excitation_Index}, let the excitation operators $\{\mathcal{X}_{\mu}\}_{\mu \in \mathcal{I}}$ be defined through Definition \ref{def:Excitation_Operator}, let the coupled cluster function $\mathcal{f} \colon \mathbb{V} \rightarrow \mathbb{V} $ be defined through Definition \ref{def:CC_function}, for any $\bt \in \mathbb{V}$ let $\mD \mathcal{f}(\bt)$ denote the Fr\'echet derivative of the coupled cluster function as defined through Equation \eqref{eq:CC_Jac_1}, let $\bt^*= \{\bt^*_{\mu}\}_{\mu \in \mathcal{I}} \in \mathbb{V}$ denote a zero of the CC function that generates the intermediately normalised eigenfunction $\Psi^* \in \widehat{\mathcal{H}}^1$ of the electronic Hamiltonian with corresponding eigenvalue $\mathcal{E}^*$. Then for all $\bs, \bw \in \mathbb{V} $ with $\bs =\{\bs_{\nu}\}_{\nu \in \mathcal{I}}$ and $ \bw=\{\bw_{\mu}\}_{\mu \in \mathcal{I}}$, it holds that
\begin{equation}\label{eq:CC_Jac_1prime}
	\left \langle  \bw, \mD \mathcal{f}(\bt^*) \bs \right\rangle_{\mathbb{V} \times \mathbb{V}^*} = \left \langle  \sum_{\mu \in \mathcal{I}} \bw_{\mu} \mathcal{X}_\mu \Psi_0, e^{-\mathcal{T}^*} \left(H - \mathcal{E}^*\right) e^{\mathcal{T}^*} \sum_{\nu \in \mathcal{I}} \bs_{\nu} \mathcal{X}_\nu\Psi_0\right \rangle_{\widehat{\mathcal{H}}^1 \times \widehat{\mathcal{H}}^{-1}} ~ \text{ where } ~ \mathcal{T}^*:=\sum_{\mu \in \mathcal{I}} \bt^*_{\mu} \mathcal{X}_\mu.
\end{equation}
\end{corollary}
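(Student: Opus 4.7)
The plan is to start from the formula for the Fr\'echet derivative provided by Proposition \ref{prop:CC_der} at the particular point $\bt^*$, and then use two ingredients specific to zeros of the coupled cluster function: (i) the commutativity of cluster operators (as stated in Theorem \ref{pro:excit}), and (ii) the fact that, by Theorem \ref{thm:Yvon}, $\bt^*$ generates an eigenpair $(\mathcal{E}^*, \Psi^* = e^{\mathcal{T}^*}\Psi_0)$ of the electronic Hamiltonian. These two facts will allow us to massage the commutator $e^{-\mathcal{T}^*}[H, \mathcal{S}]e^{\mathcal{T}^*}\Psi_0$ into the similarity-transformed shifted Hamiltonian applied to $\mathcal{S}\Psi_0$.

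More precisely, writing $\mathcal{T}^* = \sum_{\mu} \bt^*_\mu \mathcal{X}_\mu$ and $\mathcal{S} = \sum_{\nu} \bs_\nu \mathcal{X}_\nu$, I first expand the commutator:
\begin{equation*}
e^{-\mathcal{T}^*}[H, \mathcal{S}] e^{\mathcal{T}^*} \Psi_0 = e^{-\mathcal{T}^*} H \mathcal{S}\, e^{\mathcal{T}^*} \Psi_0 - e^{-\mathcal{T}^*}\mathcal{S} H e^{\mathcal{T}^*} \Psi_0.
\end{equation*}
Since $\mathcal{S}$ commutes with every $\mathcal{X}_\mu$ by Theorem \ref{pro:excit}, it commutes with both $e^{\mathcal{T}^*}$ and $e^{-\mathcal{T}^*}$. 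Applying $\mathcal{S} e^{\mathcal{T}^*} = e^{\mathcal{T}^*}\mathcal{S}$ in the first term and $e^{-\mathcal{T}^*}\mathcal{S} = \mathcal{S} e^{-\mathcal{T}^*}$ in the second term, I obtain
\begin{equation*}
e^{-\mathcal{T}^*}[H, \mathcal{S}] e^{\mathcal{T}^*} \Psi_0 = e^{-\mathcal{T}^*} H e^{\mathcal{T}^*}\, \mathcal{S}\Psi_0 - \mathcal{S} \left( e^{-\mathcal{T}^*} H e^{\mathcal{T}^*} \Psi_0 \right).
\end{equation*}

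Next, since $\bt^*$ is a zero of the CC function, Theorem \ref{thm:Yvon} gives $H e^{\mathcal{T}^*}\Psi_0 = \mathcal{E}^* e^{\mathcal{T}^*}\Psi_0$, so $e^{-\mathcal{T}^*} H e^{\mathcal{T}^*} \Psi_0 = \mathcal{E}^* \Psi_0$. Substituting this into the second term and using $\mathcal{E}^* \mathcal{S}\Psi_0 = \mathcal{E}^* e^{-\mathcal{T}^*} e^{\mathcal{T}^*} \mathcal{S}\Psi_0$ to pull the factor back inside the similarity transform, I conclude
\begin{equation*}
e^{-\mathcal{T}^*}[H, \mathcal{S}] e^{\mathcal{T}^*} \Psi_0 = e^{-\mathcal{T}^*} H e^{\mathcal{T}^*} \mathcal{S}\Psi_0 - \mathcal{E}^* \mathcal{S}\Psi_0 = e^{-\mathcal{T}^*}\left(H - \mathcal{E}^*\right) e^{\mathcal{T}^*} \mathcal{S}\Psi_0.
\end{equation*}
Inserting this identity into the expression for $\langle \bw, \mD\mathcal{f}(\bt^*) \bs\rangle$ from Proposition \ref{prop:CC_der} yields the claimed formula \eqref{eq:CC_Jac_1prime}.

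There is no real analytic obstacle here: the manipulations are purely algebraic and the boundedness required to justify each step (in particular, to interpret the action of $e^{\pm \mathcal{T}^*}$ on $\widehat{\mathcal{H}}^1$ and $\widehat{\mathcal{H}}^{-1}$, and the boundedness of $H$ between these spaces) has already been established in Theorem \ref{pro:excit} and Section \ref{sec:2}. The only mild subtlety is that $H$ does not commute with $e^{\mathcal{T}^*}$, so one must take care to only use the commutation of $\mathcal{S}$ with the exponential cluster operators and to invoke the eigenvalue equation \emph{only} after $H$ has been placed adjacent to $e^{\mathcal{T}^*}\Psi_0$; this is precisely what the rearrangement above accomplishes.
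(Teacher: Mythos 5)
Your proposal is correct and follows essentially the same route as the paper's own (very brief) proof: expand the commutator in the Fr\'echet derivative formula of Proposition \ref{prop:CC_der}, use the commutativity of $\mathcal{S}$ with $e^{\pm\mathcal{T}^*}$, and invoke the eigenvalue relation $He^{\mathcal{T}^*}\Psi_0=\mathcal{E}^*e^{\mathcal{T}^*}\Psi_0$ supplied by Theorem \ref{thm:Yvon}. Your additional remark about only applying the eigenvalue equation once $H$ sits next to $e^{\mathcal{T}^*}\Psi_0$ is exactly the right care to take and matches the paper's intent.
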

\begin{proof}
The proof follows by a direct calculation from Equation \eqref{eq:CC_Jac_1} by expanding the commutator, making use of the fact that $H\Psi^* = \mathcal{E}^* \Psi^*= \mathcal{E}^* e^{\mathcal{T}^*}\Psi_0$ by definition together with the commutativity of the cluster operators $\mathcal{T}^*$ and $\mathcal{S}= \sum_{\mu} \bs_{\mu}\mathcal{X}_{\mu}$.
\end{proof}

Consider the setting of Corollary \ref{cor:CC_der}. Let us remark here that, thanks to Theorem \ref{thm:Yvon}, the eigenvalue $\mathcal{E}^*$ which appears in \eqref{eq:CC_Jac_1prime} coincides with the CC energy $\mathcal{E}^*_{\rm CC}$ generated by $\bt^*$ through Equation \eqref{eq:CC_Energy}. Therefore, when considering expressions of the form \eqref{eq:CC_Jac_1prime} involving the CC Fr\'echet derivative, we may refer to $\mathcal{E}^*$ as simply the coupled cluster energy associated with $\bt^*$ without reference to the underlying eigenpair of the electronic Hamiltonian.


\begin{corollary}[Local Lipschitz Continuity of Coupled Cluster Fr\'echet Derivative]\label{cor:CC_der_Lipschitz}~

Let the coupled cluster function $\mathcal{f} \colon \mathbb{V} \rightarrow \mathbb{V} $ be defined through Definition \ref{def:CC_function}, and for any $\bt \in \mathbb{V}$ let $\mD \mathcal{f}(\bt)$ denote the Fr\'echet derivative of the coupled cluster function as defined through Equation \eqref{eq:CC_Jac_1}. Then the mapping $\mathbb{V} \ni \bt \mapsto \mD \mathcal{f}(\bt)  \colon \mathbb{V} \rightarrow \mathbb{V}^*$ is Lipschitz continuous for bounded arguments, i.e., for any~$\bt \in \mathbb{V}$ and any any $\delta > 0$, there exists a constant $\mL_{\bt}(\delta) >0$ such that
\begin{align*}
	\sup_{\bt \neq \bs \in {\mB_{\delta}(\bt)}}\frac{\Vert \mD \mathcal{f}(\bt) - \mD \mathcal{f}(\bs)\Vert_{\mathbb{V} \rightarrow \mathbb{V}^*}}{\Vert \bt - \bs \Vert_{\mathbb{V}}} := \mL_{\bt}(\delta)  < \infty. 
\end{align*}
\end{corollary}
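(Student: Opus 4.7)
My plan is to exploit the explicit commutator representation \eqref{eq:CC_Jac_1} of the Fr\'echet derivative together with the fact, recorded in Theorem \ref{pro:excit}, that the exponential map is a local $\mathscr{C}^{\infty}$ diffeomorphism on the cluster subalgebra $\mathfrak{L}$ (hence in particular locally Lipschitz when measured in the $\widehat{\mathcal{H}}^1 \to \widehat{\mathcal{H}}^1$ operator norm). Fix $\bt \in \mathbb{V}$ and $\delta>0$, and let $\bs \in \mB_{\delta}(\bt)$. Writing $\mathcal{T}=\sum_\mu \bt_\mu \mathcal{X}_\mu$ and $\mathcal{S}=\sum_\mu \bs_\mu \mathcal{X}_\mu$, I would first test the difference $\mD\mathcal{f}(\bt) - \mD\mathcal{f}(\bs)$ against arbitrary unit vectors $\bw=\{\bw_\mu\},\br=\{\br_\nu\} \in \mathbb{V}$, obtaining
\begin{align*}
 \left\langle \bw, \big(\mD\mathcal{f}(\bt) - \mD\mathcal{f}(\bs)\big) \br\right\rangle_{\mathbb{V}\times \mathbb{V}^*} = \left\langle \mathcal{W}\Psi_0, \Big( e^{-\mathcal{T}}\comm{H}{\mathcal{R}} e^{\mathcal{T}} - e^{-\mathcal{S}}\comm{H}{\mathcal{R}} e^{\mathcal{S}} \Big)\Psi_0\right\rangle_{\widehat{\mathcal{H}}^1 \times \widehat{\mathcal{H}}^{-1}},
\end{align*}
where $\mathcal{W}=\sum_\mu \bw_\mu \mathcal{X}_\mu$ and $\mathcal{R}=\sum_\nu \br_\nu \mathcal{X}_\nu$.

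Next, I would split this difference by adding and subtracting the hybrid term $e^{-\mathcal{T}}\comm{H}{\mathcal{R}} e^{\mathcal{S}}$, producing two pieces of the form
\begin{align*}
\left\langle \mathcal{W}\Psi_0, e^{-\mathcal{T}}\comm{H}{\mathcal{R}} \big(e^{\mathcal{T}} - e^{\mathcal{S}}\big)\Psi_0\right\rangle_{\widehat{\mathcal{H}}^1 \times \widehat{\mathcal{H}}^{-1}} \quad \text{and} \quad \left\langle \mathcal{W}\Psi_0, \big(e^{-\mathcal{T}} - e^{-\mathcal{S}}\big)\comm{H}{\mathcal{R}} e^{\mathcal{S}}\Psi_0\right\rangle_{\widehat{\mathcal{H}}^1 \times \widehat{\mathcal{H}}^{-1}}.
\end{align*}
Each factor can then be estimated separately: the excitation and de-excitation cluster operators satisfy the uniform bounds $\Vert \mathcal{W}\Psi_0\Vert_{\widehat{\mathcal{H}}^1}=\Vert\bw\Vert_{\mathbb{V}}$ and $\Vert \mathcal{R}\Vert_{\widehat{\mathcal{H}}^{\pm 1}\to\widehat{\mathcal{H}}^{\pm 1}}\leq \max(\beta,\beta^{\dagger})\Vert \br\Vert_{\mathbb{V}}$ from Theorem \ref{pro:excit}; the Hamiltonian is bounded from $\widehat{\mathcal{H}}^1$ to $\widehat{\mathcal{H}}^{-1}$ by \eqref{eq:contin}; and the exponential mappings $\mathcal{T}\mapsto e^{\pm \mathcal{T}}$ are $\mathscr{C}^{\infty}$ on the subalgebra $\mathfrak{L}$, hence Lipschitz on the bounded set $\{\mathcal{U}: \bt\text{ or }\bs \in \mB_{\delta}(\bt)\}$ with a constant that depends only on $\bt$ and $\delta$, giving $\Vert e^{\pm\mathcal{T}}-e^{\pm \mathcal{S}}\Vert_{\widehat{\mathcal{H}}^{\pm 1} \to \widehat{\mathcal{H}}^{\pm 1}} \leq C(\bt,\delta)\Vert \bt-\bs\Vert_{\mathbb{V}}$.

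Assembling these estimates and taking the supremum over unit vectors $\bw,\br \in \mathbb{V}$ yields a bound of the form $\Vert \mD\mathcal{f}(\bt)-\mD\mathcal{f}(\bs)\Vert_{\mathbb{V}\to\mathbb{V}^*} \leq \mL_{\bt}(\delta)\Vert \bt-\bs\Vert_{\mathbb{V}}$ with a finite constant depending only on $\bt$, $\delta$, the continuity constant of $H$, and the constants $\beta,\beta^{\dagger}$ from Theorem \ref{pro:excit}. The only delicate part I anticipate is invoking the local Lipschitz continuity of the exponential map in the correct operator topology ($\widehat{\mathcal{H}}^1 \to \widehat{\mathcal{H}}^1$ for $e^{\mathcal{T}}$ acting on the right, and $\widehat{\mathcal{H}}^{-1} \to \widehat{\mathcal{H}}^{-1}$ for $e^{-\mathcal{T}}$ acting on the left); this is however guaranteed by the $\mathscr{C}^{\infty}$ diffeomorphism property on $\mathfrak{L}$ combined with the continuous embedding of $\mathbb{V}$ into each of these operator algebras from Theorem \ref{pro:excit}, and was already implicitly used at the end of the proof of Proposition \ref{prop:CC_der}.
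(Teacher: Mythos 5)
Your proposal is correct, and it is in effect a quantitative, fleshed-out version of what the paper leaves implicit: the paper's proof of Corollary \ref{cor:CC_der_Lipschitz} is a one-line appeal to the regularity of $\mathcal{f}$ established in Proposition \ref{prop:CC_der} (smoothness, hence boundedness of the second derivative on balls, hence Lipschitz continuity of $\mD\mathcal{f}$ for bounded arguments), whereas you estimate $\mD\mathcal{f}(\bt)-\mD\mathcal{f}(\bs)$ directly from the commutator representation \eqref{eq:CC_Jac_1}, reusing the same add-and-subtract splitting that the paper employs at the end of the proof of Proposition \ref{prop:CC_der} to show mere continuity of $\bt \mapsto \mD\mathcal{f}(\bt)$, but upgrading it to an explicit Lipschitz bound. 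What your route buys is an explicit constant $\mL_{\bt}(\delta)$ in terms of $\Vert H\Vert_{\widehat{\mathcal{H}}^1\to\widehat{\mathcal{H}}^{-1}}$, $\beta$, $\beta^{\dagger}$ and $e^{\beta(\Vert\bt\Vert_{\mathbb{V}}+\delta)}$-type factors, and it also quietly repairs a point the paper glosses over: in infinite dimensions, $\mathscr{C}^{\infty}$ smoothness alone does not give Lipschitz continuity of the derivative on bounded (non-compact) sets without a uniform bound, which here is supplied by the explicit structure you use. Two small touch-ups: the bound $\Vert \mathcal{R}\Vert_{\widehat{\mathcal{H}}^{-1}\to\widehat{\mathcal{H}}^{-1}}\leq \beta^{\dagger}\Vert\br\Vert_{\mathbb{V}}$ should be justified by duality (the extension of $\mathcal{R}$ to $\widehat{\mathcal{H}}^{-1}$ is the adjoint of $\mathcal{R}^{\dagger}\colon\widehat{\mathcal{H}}^1\to\widehat{\mathcal{H}}^1$), and the local Lipschitz continuity of $\mathcal{T}\mapsto e^{\pm\mathcal{T}}$ on bounded sets is most cleanly obtained not from the diffeomorphism property per se but from the nilpotency $\mathcal{T}^{N+1}=0$ (Theorem \ref{pro:excit}), which makes the exponential a fixed polynomial of degree $N$ and yields the bound $\Vert e^{\pm\mathcal{T}}-e^{\pm\mathcal{S}}\Vert \leq C(\bt,\delta)\beta\Vert\bt-\bs\Vert_{\mathbb{V}}$ (and its $\widehat{\mathcal{H}}^{-1}$ analogue via $\beta^{\dagger}$) directly.
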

Corollary \ref{cor:CC_der_Lipschitz} follows immediately from the regularity of the coupled cluster function.

Having obtained an expression for the first Fr\'echet derivative $\mD \mathcal{f}(\bt), ~\bt \in \mathbb{V}$ of the coupled cluster function and studied some regularity properties of the mapping $\mathbb{V} \ni \bt \mapsto \mD \mathcal{f}(\bt)$, the next step in our analysis will be to study the invertibility of the Fr\'echet derivative $\mD \mathcal{f}$ at any zero $\bt^* \in \mathbb{V}$ of the coupled cluster function. In order to proceed with this analysis, let us first notice that thanks to the expression offered by Equation \eqref{eq:CC_Jac_1prime} in Corollary \ref{cor:CC_der}, the coupled cluster Fr\'echet derivative at any zero $\bt^*\in \mathbb{V}$ can be described in terms of an operator acting on a subspace of the infinite-dimensional $N$-particle space $\widehat{\mathcal{H}}^1$. This observation motivates us to introduce the following operator acting on the space $\widetilde{\mathcal{V}}=\{\Psi_0\}^{\perp} \subset \widehat{\mathcal{H}}^1$ (recall Definition \ref{def:V_K}).

\begin{definition}[Operator Induced by Coupled Cluster Fr\'echet Derivative at $\bt^*$]\label{def:A_op}~

Let the excitation index set $\mathcal{I}$ be defined through Definition \ref{def:Excitation_Index}, let the excitation operators $\{\mathcal{X}_{\mu}\}_{\mu \in \mathcal{I}}$ be defined through Definition~\ref{def:Excitation_Operator}, let $\bt^*=\{\bt^*_{\mu}\}_{\mu \in \mathcal{I}} \in \mathbb{V}$ be any zero of the coupled cluster function defined through Definition~\ref{def:CC_function}, let $\mathcal{E}^*$ be the associated coupled cluster energy calculated through \eqref{eq:CC_Energy}, and let the space $\widetilde{\mathcal{V}} \subset \widehat{\mathcal{H}}^1$ be defined as in Definition~\ref{def:V_K}. We define the operator $\mathcal{A}(t^*) \colon\widetilde{\mathcal{V}}\rightarrow \widehat{\mathcal{H}}^{-1}$ as the mapping with the property that
\begin{equation}\label{eq:CC_Jac_3}
	\forall \widetilde{\Psi}\in \widetilde{\mathcal{V}}\colon \qquad \mathcal{A}(\bt^*) \widetilde{\Psi}:= e^{-\mathcal{T}^*}\left(H- \mathcal{E}^*\right) e^{\mathcal{T}^*}\widetilde{\Psi}\qquad \text{where} \quad \mathcal{T}^*= \sum_{\mu \in \mathcal{I}} \bt_{\mu}^* \mathcal{X}_{\mu}.
\end{equation}
\end{definition}

\begin{Notation}\label{not:1}~
Let $\bt^* \in \mathbb{V}$ be any zero of the coupled cluster function defined through Definition \ref{def:CC_function} and let $\mathcal{E}^*$ be the associated coupled cluster energy calculated through Equation \eqref{eq:CC_Energy}. 

\begin{itemize}
	\item We denote by $\alpha_{\bt^*} > 0$ the constant defined as 
	\begin{align*}
		\alpha_{\bt^*}:= \Vert \mathcal{A}(\bt^*) \Vert_{\widetilde{\mathcal{V}}\to \widetilde{\mathcal{V}}^*}:= \sup_{0\neq \widetilde{\Phi} \in \widetilde{\mathcal{V}}}\; \sup_{0\neq \widetilde{\Psi} \in \widetilde{\mathcal{V}}} \frac{\langle\widetilde{\Phi}, \mathcal{A}(\bt^*) \widetilde{\Psi} \rangle_{\widehat{\mathcal{H}}^{-1} \times \widehat{\mathcal{H}}^{1}} }{\Vert \widetilde{\Phi}\Vert_{\widehat{\mathcal{H}}^{1}} \Vert\widetilde{\Psi}\Vert_{\widehat{\mathcal{H}}^{1}}}, 
	\end{align*}
	with the existence of $\alpha_{\bt^*}$ being guaranteed by Proposition \ref{prop:CC_der}.
	
	\item For any $\bt \in \mathbb{V}$ we denote by $\mL_{\bt} \colon \mathbb{R}_+ \rightarrow \mathbb{R}_+$ the so-called `Lipschitz continuity function' as the mapping with the property that for all $\delta >0$ it holds that
	\begin{align*}
		\forall \delta > 0 \colon \quad \mL_{\bt}(\delta):= \sup_{\bt \neq \bs \in {\mB_{\delta}(\bt)}}\frac{\Vert \mD \mathcal{f}(\bt) - \mD \mathcal{f}(\bs)\Vert_{\mathbb{V} \rightarrow \mathbb{V}^*}}{\Vert \bt - \bs \Vert_{\mathbb{V}}},
	\end{align*}
	with the existence of the function $\mL_{\bt}$ being guaranteed by Corollary \ref{cor:CC_der_Lipschitz}.
	
	\item We denote by $\mathcal{A}(\bt^*)^{\dagger} \colon \widetilde{\mathcal{V}} \rightarrow \widehat{\mathcal{H}}^{-1}$ the mapping with the property that with the property that for all $\widetilde{\Psi} \in \widetilde{\mathcal{V}}$ it holds that
	\begin{equation}\label{eq:CC_Jac_adjoint}
		\mathcal{A}(\bt^*)^\dagger \widetilde{\Psi}:=	e^{(\mathcal{T}^*)^{\dagger}}\left(H- \mathcal{E}^*\right) e^{-(\mathcal{T}^*)^{\dagger}}\widetilde{\Psi},
	\end{equation}
	
	and we emphasise that for all $\widetilde{\Psi}, \widetilde{\Phi} \in \widetilde{\mathcal{V}}$ it holds that
	\begin{align*}
		\langle\widetilde{\Phi}, \mathcal{A}(\bt^*)^{\dagger} \widetilde{\Psi} \rangle_{\widehat{\mathcal{H}}^{1} \times \widehat{\mathcal{H}}^{-1}}= \langle \mathcal{A}(\bt^*)\widetilde{\Phi}, \widetilde{\Psi} \rangle_{\widehat{\mathcal{H}}^{-1} \times \widehat{\mathcal{H}}^{1}},
	\end{align*}
	
	so that in particular
	\begin{align*}
		\Vert \mathcal{A}(\bt^*)^{\dagger} \Vert_{\widetilde{\mathcal{V}}\to \widetilde{\mathcal{V}}^*} = \Vert \mathcal{A}(\bt^*) \Vert_{\widetilde{\mathcal{V}}\to \widetilde{\mathcal{V}}^*}= \alpha_{\bt^*}.
	\end{align*}
	
\end{itemize}

\end{Notation}

Consider now Definition \ref{def:A_op} of the bounded linear operator $\mathcal{A}(\bt^*) \colon \widetilde{\mathcal{V}} \rightarrow \widehat{\mathcal{H}}^{-1}$ for an arbitrary zero $\bt^*\in \mathbb{V}$ of the coupled cluster function. Since the coefficient space $\mathbb{V}$ inherits its inner product from the inner product on $\widehat{\mathcal{H}}^1$, it immediately follows that
\begin{align*}
\mD \mathcal{f}(\bt^*)\colon \mathbb{V} \rightarrow \mathbb{V}^* \quad \text{ is an isomorphism } \qquad \iff \qquad \mathcal{A}(\bt^*) \colon \widetilde{\mathcal{V}} \rightarrow \widetilde{\mathcal{V}}^* \quad \text{is an isomorphism}. 
\end{align*}

We claim that the mapping $\mathcal{A}(\bt^*) \colon \widetilde{\mathcal{V}} \rightarrow \widetilde{\mathcal{V}}^*$ can indeed be shown to be an isomorphism \emph{provided} that the zero $\bt^*$ is generated by an \emph{intermediately normalisable eigenfunction} $\Psi^* \in \widehat{\mathcal{H}}^1$ of the electronic Hamiltonian that corresponds to a \emph{simple and isolated eigenvalue}. The proof of this claim, which is the subject of the next theorem, is based on classical functional analysis arguments, and will proceed in the following steps: Assuming that the zero $\bt^*$ is generated by a non-degenerate, intermediately normalisable eigenfunction of the electronic Hamiltonian:
\begin{enumerate}
\item We will first show that $\mathcal{A}(\bt^*) \colon \widetilde{\mathcal{V}} \rightarrow \widetilde{\mathcal{V}}^*$ is injective. As a consequence of the Hahn-Banach theorem, we will deduce that the adjoint operator $\mathcal{A}(\bt^*)^{\dagger} \colon \widetilde{\mathcal{V}} \rightarrow \widetilde{\mathcal{V}}^*$ has \emph{dense} range.

\item Next, we will show that the operator $\mathcal{A}(\bt^*)^{\dagger} \colon \widetilde{\mathcal{V}} \rightarrow \widetilde{\mathcal{V}}^*$ is bounded below. This will imply that $\mathcal{A}(\bt^*)^{\dagger} \colon \widetilde{\mathcal{V}} \rightarrow \widetilde{\mathcal{V}}^*$ is injective, and has \emph{closed} range. 
\end{enumerate}

Combining the above two steps, will allow us to deduce that the adjoint operator $\mathcal{A}(\bt^*)^{\dagger} \colon \widetilde{\mathcal{V}} \rightarrow \widetilde{\mathcal{V}}^*$ is an isomorphism, and therefore so too is the operator $\mathcal{A}(\bt^*)\colon \widetilde{\mathcal{V}} \rightarrow \widetilde{\mathcal{V}}^*$. Let us emphasise here that rather than attacking directly the operator $\mathcal{A}(\bt^*)$ induced by the Fr\'echet derivative of the coupled cluster function at $\bt^* \in \mathbb{V}$, we are choosing to analyse its adjoint. This choice is motivated by practical reasons: there is a technical difficulty in proving directly the invertibility of $\mathcal{A}(\bt^*)$ which is avoided if we study instead $\mathcal{A}(\bt^*)^{\dagger}$.

\begin{theorem}[Invertibility of Operator Induced by Coupled Cluster Fr\'echet Derivative at $\bt^*$]\label{thm:CC}~

Let $\bt^* =\{\bt^*_{\mu}\}_{\mu \in \mathcal{I}}\in \mathbb{V}$ be associated with a non-degenerate, intermediately normalisable eigenpair $(\mathcal{E}^*, \Psi^*) \in \mathbb{R} \times \widehat{\mathcal{H}}^1$ of the electronic Hamiltonian $H \colon \widehat{\mathcal{H}}^1 \rightarrow \widehat{\mathcal{H}}^{-1}$ defined through Equation \eqref{eq:Hamiltonian}, i.e.,
\begin{align*}
	H\Psi^* = \mathcal{E}^* \Psi^*, \qquad \text{with }~ \mathcal{E}^* ~\text{ simple, isolated} \qquad \text{and} \qquad \Psi^* = e^{\mathcal{T}^*}\Psi_0 \quad \text{where} \quad \mathcal{T}^* = \sum_{\mu \in \mathcal{I}}\bt_{\mu}^*\mathcal{X}_{\mu}.
\end{align*} 
Then the operator $\mathcal{A}(\bt^*) \colon \widetilde{\mathcal{V}} \rightarrow \widetilde{\mathcal{V}}^*$ defined through Definition \ref{def:A_op} is an isomorphism. 
\end{theorem}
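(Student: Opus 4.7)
The proof plan follows the two-step roadmap laid out just above the statement: establish (i)~$\mathcal{A}(\bt^*)$ is injective and (ii)~$\mathcal{A}(\bt^*)^{\dagger}$ is bounded below. By Hahn--Banach/closed-range duality in this reflexive Hilbertian setting, (i)~forces $\mathrm{ran}\,\mathcal{A}(\bt^*)^{\dagger}$ to be dense in $\widetilde{\mathcal{V}}^*$, while (ii)~simultaneously yields injectivity and closed range of $\mathcal{A}(\bt^*)^{\dagger}$. Combining these three facts, $\mathcal{A}(\bt^*)^{\dagger}$ is an isomorphism, and dualising concludes the same for $\mathcal{A}(\bt^*)$.

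Step~(i) is quick. If $\mathcal{A}(\bt^*)\widetilde{\Psi} = 0$ in $\widetilde{\mathcal{V}}^*$, then since $e^{-\mathcal{T}^*}$ admits a bounded invertible extension to $\widehat{\mathcal{H}}^{-1}$ by Theorem~\ref{pro:excit}, the function $\Phi := e^{\mathcal{T}^*}\widetilde{\Psi} \in \widehat{\mathcal{H}}^1$ must satisfy $(H-\mathcal{E}^*)\Phi = 0$ in $\widehat{\mathcal{H}}^{-1}$. Since $\mathcal{E}^*$ is simple and its eigenspace is spanned by $\Psi^* = e^{\mathcal{T}^*}\Psi_0$, we get $\Phi = c\,e^{\mathcal{T}^*}\Psi_0$ for some scalar $c$, hence $\widetilde{\Psi} = c\Psi_0$; the requirement $\widetilde{\Psi} \in \{\Psi_0\}^{\perp}$ then forces $c = 0$.

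The core of Step~(ii) is the observation that the cluster exponentials intertwine the two orthogonality constraints in play. Using $(\mathcal{T}^*)^{\dagger}\Psi_0 = 0$ (so that $e^{-\mathcal{T}^*}\Psi^* = \Psi_0$), for every $\widetilde{\Psi} \in \widetilde{\mathcal{V}}$ one has
\begin{equation*}
\bigl(e^{-(\mathcal{T}^*)^{\dagger}}\widetilde{\Psi},\; \Psi^*\bigr)_{\widehat{\mathcal{L}}^2}
= \bigl(\widetilde{\Psi},\; e^{-\mathcal{T}^*}\Psi^*\bigr)_{\widehat{\mathcal{L}}^2}
= \bigl(\widetilde{\Psi},\; \Psi_0\bigr)_{\widehat{\mathcal{L}}^2}=0,
\end{equation*}
and dually $e^{(\mathcal{T}^*)^{\dagger}}$ on $\widehat{\mathcal{H}}^{-1}$ carries the annihilator of $\Psi^*$ into the annihilator of $\Psi_0$, which canonically realises $\widetilde{\mathcal{V}}^*$. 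Therefore $\mathcal{A}(\bt^*)^{\dagger}$ factors as
\begin{equation*}
\widetilde{\mathcal{V}} \xrightarrow{\,e^{-(\mathcal{T}^*)^{\dagger}}\,} \{\Psi^*\}^{\perp}\cap\widehat{\mathcal{H}}^1 \xrightarrow{\,H-\mathcal{E}^*\,} \mathrm{ann}(\Psi^*)\cap\widehat{\mathcal{H}}^{-1} \xrightarrow{\,e^{(\mathcal{T}^*)^{\dagger}}\,} \widetilde{\mathcal{V}}^*,
\end{equation*}
in which the outer arrows are topological isomorphisms of closed codimension-one subspaces by Theorem~\ref{pro:excit}. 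For the middle arrow I would invoke the functional calculus of the self-adjoint operator $H$: writing $H - \mathcal{E}^* = (H+C)^{1/2} B\, (H+C)^{1/2}$ with $B := I - (C+\mathcal{E}^*)(H+C)^{-1}$ bounded and self-adjoint on $\widehat{\mathcal{L}}^2$, the ellipticity estimate~\eqref{eq:ellip} makes $(H+C)^{1/2}$ a topological isomorphism $\widehat{\mathcal{H}}^1 \to \widehat{\mathcal{L}}^2$ (and dually $\widehat{\mathcal{L}}^2 \to \widehat{\mathcal{H}}^{-1}$), while the hypothesis that $\mathcal{E}^*$ is an isolated, simple eigenvalue makes $B$ bounded below on $\{\Psi^*\}^{\perp}\cap\widehat{\mathcal{L}}^2$. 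Chaining these bounds delivers the $\widetilde{\mathcal{V}} \to \widetilde{\mathcal{V}}^*$ inf-sup estimate for $\mathcal{A}(\bt^*)^{\dagger}$.

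The main technical obstacle is exactly this middle arrow: because the embedding $\widehat{\mathcal{H}}^1 \hookrightarrow \widehat{\mathcal{L}}^2$ is \emph{not} compact on $\mathbb{R}^{3N}$, one cannot naively invoke the Fredholm alternative to promote the $\widehat{\mathcal{L}}^2$-level spectral gap at $\mathcal{E}^*$ into an $\widehat{\mathcal{H}}^1 \to \widehat{\mathcal{H}}^{-1}$ isomorphism for $H-\mathcal{E}^*$ on $\{\Psi^*\}^{\perp}$. The functional-calculus splitting above bypasses this by reducing matters to a bounded-below self-adjoint operator on $\widehat{\mathcal{L}}^2$. One must also carefully track which orthogonal-complement/annihilator is meant in each arrow and verify that the three $\widehat{\mathcal{L}}^2$-based constraints glue together consistently with the $\widehat{\mathcal{H}}^1$ and $\widehat{\mathcal{H}}^{-1}$ topologies; once this is checked, the remainder reduces to bookkeeping with the cluster-operator continuity properties in Theorem~\ref{pro:excit}.
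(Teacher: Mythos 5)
Your Step (i) contains a genuine gap. Vanishing of $\mathcal{A}(\bt^*)\widetilde{\Psi}$ in $\widetilde{\mathcal{V}}^*$ only means that the functional $e^{-\mathcal{T}^*}(H-\mathcal{E}^*)e^{\mathcal{T}^*}\widetilde{\Psi}\in\widehat{\mathcal{H}}^{-1}$ annihilates the codimension-one subspace $\widetilde{\mathcal{V}}=\{\Psi_0\}^{\perp}$; it does not mean it vanishes as an element of $\widehat{\mathcal{H}}^{-1}$. A priori it could be a nonzero multiple of the functional $(\Psi_0,\cdot)_{\widehat{\mathcal{L}}^2}$, and the bounded invertibility of $e^{-\mathcal{T}^*}$ on $\widehat{\mathcal{H}}^{-1}$ does nothing to exclude this. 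Closing precisely this loophole is the nontrivial half of the paper's injectivity step: assuming $\langle\Psi_0,\mathcal{A}(\bt^*)\widetilde{\Psi}\rangle_{\widehat{\mathcal{H}}^1\times\widehat{\mathcal{H}}^{-1}}=\widetilde{c}\neq 0$, one tests against the particular function $\Phi=e^{(\mathcal{T}^*)^{\dagger}}e^{\mathcal{T}^*}\Psi_0$, for which the right-hand side equals $\widetilde{c}\,\Vert e^{\mathcal{T}^*}\Psi_0\Vert^2_{\widehat{\mathcal{L}}^2}\neq 0$ while the left-hand side reduces, via self-adjointness and $(H-\mathcal{E}^*)e^{\mathcal{T}^*}\Psi_0=0$, to zero, a contradiction. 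Only after this does your argument (remove the exponential, invoke simplicity of $\mathcal{E}^*$, intersect with $\{\Psi_0\}^{\perp}$) apply; as written, the inference ``zero in $\widetilde{\mathcal{V}}^*$ implies $(H-\mathcal{E}^*)e^{\mathcal{T}^*}\widetilde{\Psi}=0$ in $\widehat{\mathcal{H}}^{-1}$'' is unjustified. Note that your own Step (ii) factorisation could repair the structure from within: each of the three arrows is in fact onto, not merely bounded below ($e^{-(\mathcal{T}^*)^{\dagger}}$ maps $\widetilde{\mathcal{V}}$ bijectively onto $\{\Psi^*\}^{\perp}\cap\widehat{\mathcal{H}}^1$, the square-root splitting makes $H-\mathcal{E}^*$ surjective onto the annihilator of $\Psi^*$, and restriction of the annihilator of $\Psi_0$ to $\widetilde{\mathcal{V}}$ is onto $\widetilde{\mathcal{V}}^*$), so $\mathcal{A}(\bt^*)^{\dagger}$ would be an isomorphism outright and $\mathcal{A}(\bt^*)$ would follow by duality, making Step (i) superfluous; but in the injectivity-plus-dense-range architecture you chose, the hole in Step (i) is load-bearing.

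Your Step (ii), though sketched, is a sound alternative route to the crucial estimate. The paper establishes the inf-sup property of $H-\mathcal{E}^*$ on $\{\Psi^*\}^{\perp}$ by an explicit sign-flip test-function construction combined with the ellipticity estimate \eqref{eq:ellip} (Remark \ref{rem:interp_const}), which has the advantage of producing an explicit, computable lower bound for $\gamma$ and hence for the constant $\gamma/\Theta$ that is reused in Corollary \ref{cor:CC_der_inv} and the subsequent error estimates; your splitting $H-\mathcal{E}^*=(H+C)^{1/2}B(H+C)^{1/2}$ with the spectral gap controlling $B$ on $\{\Psi^*\}^{\perp}\cap\widehat{\mathcal{L}}^2$ is cleaner and bypasses the explicit eigenfunction decomposition, at the cost of less explicit constants. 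The intertwining computation $\bigl(e^{-(\mathcal{T}^*)^{\dagger}}\widetilde{\Psi},\Psi^*\bigr)_{\widehat{\mathcal{L}}^2}=\bigl(\widetilde{\Psi},\Psi_0\bigr)_{\widehat{\mathcal{L}}^2}=0$ that drives your factorisation is the same key identity used in the paper's Step 2.
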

\begin{proof}
The proof follows the aforementioned two steps. We begin with the injectivity of $\mathcal{A}(\bt^*)$. 

\vspace{0.75cm}

\begin{mdframed}
	\textbf{Step 1:} $\mathcal{A}(\bt^*) \colon \widetilde{\mathcal{V}} \rightarrow \widetilde{\mathcal{V}}^*$ is injective.
\end{mdframed}

Suppose there exists $0 \neq \widetilde{\Psi} \in \widetilde{\mathcal{V}}$ such that $\mathcal{A}(\bt^*)\widetilde{\Psi}\equiv 0$ in $\widetilde{\mathcal{V}}^*$, i.e., for all $\widetilde{\Phi} \in \widetilde{\mathcal{V}}$ it holds that
\begin{align}\label{eq:injective_1}
	\left \langle \widetilde{\Phi}, \mathcal{A}(\bt^*) \widetilde{\Psi} \right\rangle_{\widehat{\mathcal{H}}^1 \times \widehat{\mathcal{H}}^{-1}}=0.
\end{align}

As a first step, we claim that from Equation \eqref{eq:injective_1} it must follow that $\mathcal{A}(\bt^*)\widetilde{\Psi}\equiv 0$ in $\widehat{\mathcal{H}}^{-1}$. Recalling the complementary decomposition of $\widehat{\mathcal{H}}^1$ given by Definition \ref{lem:comp}, we see that it suffices to prove that
\begin{align}\label{eq:injective_2}
 	\left \langle\Psi_0, \mathcal{A}(\bt^*) \widetilde{\Psi} \right\rangle_{\widehat{\mathcal{H}}^1 \times \widehat{\mathcal{H}}^{-1}}=0.
 \end{align}
 
Consider now the element $\widehat{\Phi} = e^{(\mathcal{T}^*)^{\dagger}}e^{\mathcal{T}^*} \Psi_0 \in \widehat{\mathcal{H}}^1$ and recall that we denote by $\mathbb{P}_0 \colon \widehat{\mathcal{H}}^1 \rightarrow \widehat{\mathcal{H}}^1$ the $\widehat{\mathcal{L}}^2$-orthogonal projection operator onto $\text{span} \{\Psi_0\}$ defined through Definition~\ref{lem:comp} and we have defined $\mathbb{P}^{\perp}_0:= \mathbb{I}-\mathbb{P}_0$. Clearly, we have that $\mathbb{P}_0 \widehat{\Phi} \neq 0$ since 
\begin{align}\label{eq:injective_3}
(\widehat{\Phi}, \Psi_0)_{\widehat{\mathcal{L}}^2}= \left(e^{(\mathcal{T}^*)^{\dagger}}e^{\mathcal{T}^*} \Psi_0, \Psi_0\right)_{\widehat{\mathcal{L}}^2}= \left(e^{\mathcal{T}^*} \Psi_0,  e^{\mathcal{T}^*} \Psi_0\right)_{\widehat{\mathcal{L}}^2}=\Vert \Psi^*\Vert_{\widehat{\mathcal{L}}^2}^2=:\widehat{c}_0 \neq 0.
\end{align}

Since $\Psi^*= e^{\mathcal{T}^*} \Psi_0\in \widehat{\mathcal{H}}^1$ is by definition an eigenfunction of the electronic Hamiltonian with associated eigenvalue $\mathcal{E}^*$, a direct calculation also reveals that
\begin{align}\nonumber 
  \left \langle\widehat{\Phi}, \mathcal{A}(\bt^*) \widetilde{\Psi} \right\rangle_{\widehat{\mathcal{H}}^1 \times \widehat{\mathcal{H}}^{-1}}&= \left\langle  e^{(\mathcal{T}^*)^{\dagger}}e^{\mathcal{T}^*} \Psi_0, e^{-\mathcal{T}^*}\left(H- \mathcal{E}^*\right)e^{\mathcal{T}^*}\widetilde{\Psi}\right\rangle_{\widehat{\mathcal{H}}^1 \times \widehat{\mathcal{H}}^{-1}}\\  \label{eq:injective_4} &= \left\langle  e^{\mathcal{T}^*} \Psi_0,  \left(H- \mathcal{E}^*\right)e^{\mathcal{T}^*}\widetilde{\Psi}\right\rangle_{\widehat{\mathcal{H}}^1 \times \widehat{\mathcal{H}}^{-1}}\\
  &=0. \nonumber
\end{align}

On the other hand, we also have 
\begin{align}\label{eq:injective_5}
  \left \langle\widehat{\Phi}, \mathcal{A}(\bt^*) \widetilde{\Psi} \right\rangle_{\widehat{\mathcal{H}}^1 \times \widehat{\mathcal{H}}^{-1}}=  \left \langle\mathbb{P}_0\widehat{\Phi}, \mathcal{A}(\bt^*) \widetilde{\Psi} \right\rangle_{\widehat{\mathcal{H}}^1 \times \widehat{\mathcal{H}}^{-1}}+\left \langle\mathbb{P}_0^{\perp}\widehat{\Phi}, \mathcal{A}(\bt^*) \widetilde{\Psi} \right\rangle_{\widehat{\mathcal{H}}^1 \times \widehat{\mathcal{H}}^{-1}}=\left \langle\mathbb{P}_0\widehat{\Phi}, \mathcal{A}(\bt^*) \widetilde{\Psi} \right\rangle_{\widehat{\mathcal{H}}^1 \times \widehat{\mathcal{H}}^{-1}},
\end{align} 
where the second equality is due to the fact that $\mathcal{A}(\bt^*)\widetilde{\Psi}\equiv 0$ in $\widetilde{\mathcal{V}}^*$ by assumption.

Combining therefore Equations \eqref{eq:injective_3}-\eqref{eq:injective_5}, we deduce that
\begin{align*}
0=\left \langle\widehat{\Phi}, \mathcal{A}(\bt^*) \widetilde{\Psi} \right\rangle_{\widehat{\mathcal{H}}^1 \times \widehat{\mathcal{H}}^{-1}}=\left \langle\mathbb{P}_0\widehat{\Phi}, \mathcal{A}(\bt^*) \widetilde{\Psi} \right\rangle_{\widehat{\mathcal{H}}^1 \times \widehat{\mathcal{H}}^{-1}}= \widehat{c}_0\left \langle\Psi_0, \mathcal{A}(\bt^*) \widetilde{\Psi} \right\rangle_{\widehat{\mathcal{H}}^1 \times \widehat{\mathcal{H}}^{-1}}.
\end{align*}
Since $\widehat{c}_0\neq 0$, we immediately deduce that Equation \eqref{eq:injective_2} holds and therefore $\mathcal{A}(\bt^*)\widetilde{\Psi}\equiv 0$ in $\widehat{\mathcal{H}}^{-1}$ as claimed.

Since $e^{-(\mathcal{T}^*)^{\dagger}} \colon \widehat{\mathcal{H}}^{1} \rightarrow \widehat{\mathcal{H}}^{1} $ is a bijection, we next deduce that for all $\Phi \in \widehat{\mathcal{H}}^1$ it holds that
\begin{align*}
	0= \left \langle e^{(\mathcal{T}^*)^{\dagger}}\Phi, \mathcal{A}(\bt^*) \widetilde{\Psi} \right\rangle_{\widehat{\mathcal{H}}^1 \times \widehat{\mathcal{H}}^{-1}}=\left\langle{\Phi}, \left(H- \mathcal{E}^*\right)e^{\mathcal{T}^*}\widetilde{\Psi}\right\rangle_{\widehat{\mathcal{H}}^1 \times \widehat{\mathcal{H}}^{-1}}.
\end{align*}

The simplicity of the eigenvalue $\mathcal{E}^*$ now implies that we must have 
\begin{align*}
	e^{\mathcal{T}^*}\widetilde{\Psi} \in ~\text{ span}\{\Psi^{*}\}.
\end{align*}

Using again the fact that $\Psi^* = e^{\mathcal{T}^*} \Psi_0$ and that $e^{\mathcal{T}^*} \colon \widehat{\mathcal{H}}^{1} \rightarrow \widehat{\mathcal{H}}^{1} $ is a bijection, we obtain the existence of some constant $\widetilde{c}_{0} \in \mathbb{R}$ such that
\begin{align*}
	\widetilde{\Psi}= \widetilde{c}_{0}\Psi_0.
\end{align*}

Recall however that $\widetilde{\Psi} \in \widetilde{\mathcal{V}}= \{\Psi_0\}^{\perp}$ by assumption, and therefore we must have $\widetilde{c}_{0}=0$ and thus $\widetilde{\Psi}=0$. This completes the proof of the first step.

\vspace{0.75cm}

\begin{mdframed}
	\textbf{Step 2:} $\mathcal{A}(\bt^*)^{\dagger} \colon \widetilde{\mathcal{V}} \rightarrow \widetilde{\mathcal{V}}^*$ is bounded below.
\end{mdframed}

Let $\widetilde{\Psi}\in\widetilde{\mathcal{V}}$ be arbitrary. For any $\Psi^*_{\perp} \in \{\Psi^{*}\}^{\perp} \subset \widehat{\mathcal{H}}^1$, i.e., any wave-function $\Psi^*_{\perp}$ that is $\widehat{\mathcal{L}}^2$-orthogonal to the eigenfunction $\Psi^*$ with associated eigenvalue $\mathcal{E}^*$, we define the function
\begin{align*}
	\widetilde{\Phi}_{\Psi_{\perp}}:= \mathbb{P}_0^{\perp} e^{-\mathcal{T}^*} \Psi^*_{\perp}\in \widetilde{\mathcal{V}},
\end{align*}
It is straightforward to observe that for all such $\Phi_{\Psi^{\perp}}$, it holds that
\begin{equation}\label{eq:bounded_below_0}
	\begin{split}
		\left\vert \left\langle  \Phi_{\Psi_{\perp}}, \mathcal{A}(\bt^*)^{\dagger} \widetilde{\Psi}\right\rangle_{\widehat{\mathcal{H}}^{1} \times \widehat{\mathcal{H}}^{-1}}\right\vert = \Big \vert &\underbrace{\left\langle e^{-\mathcal{T}^*} \Psi^*_{\perp}, e^{(\mathcal{T}^*)^{\dagger}}  \left(H- \mathcal{E}^*\right) e^{-(\mathcal{T}^*)^{\dagger}} \widetilde{\Psi}\right\rangle_{\widehat{\mathcal{H}}^{1} \times \widehat{\mathcal{H}}^{-1}}}_{:= \rm (I)}\\ 
		-&\underbrace{\left\langle \mathbb{P}_0 e^{-\mathcal{T}^*} \Psi^*_{\perp}, e^{(\mathcal{T}^*)^{\dagger}}  \left(H- \mathcal{E}^*\right) e^{-(\mathcal{T}^*)^{\dagger}} \widetilde{\Psi}\right\rangle_{\widehat{\mathcal{H}}^{1} \times \widehat{\mathcal{H}}^{-1}}}_{:= (\rm II)} \Big \vert.
	\end{split}
\end{equation}

We claim that the term (II) is identically zero for any choice of $\Psi^*_{\perp}$. To this end, observe that
\begin{align*}
	\mathbb{P}_0 e^{-\mathcal{T}^*} \Psi^*_{\perp}= \left(e^{-\mathcal{T}^*} \Psi^*_{\perp}, \Psi_0\right )_{\widehat{\mathcal{L}}^2}\Psi_0=  \left(\Psi^*_{\perp}, \Psi_0\right )_{\widehat{\mathcal{L}}^2}\Psi_0= \mathbb{P}_0\Psi^*_{\perp}.
\end{align*}

We therefore deduce that
\begin{align*}
	(\rm II)&=-\left\langle \mathbb{P}_0  \Psi^*_{\perp}, e^{(\mathcal{T}^*)^{\dagger}}  \left(H- \mathcal{E}^*\right) e^{-(\mathcal{T}^*)^{\dagger}} \widetilde{\Psi}\right\rangle_{\widehat{\mathcal{H}}^{1} \times \widehat{\mathcal{H}}^{-1}}\\
	&= -\left(\Psi_0, \Psi^*_{\perp}  \right)_{\widehat{\mathcal{L}}^{2} }\; \left\langle \Psi_0, e^{(\mathcal{T}^*)^{\dagger}}  \left(H- \mathcal{E}^*\right) e^{-(\mathcal{T}^*)^{\dagger}} \widetilde{\Psi} \right\rangle_{\widehat{\mathcal{H}}^{1} \times \widehat{\mathcal{H}}^{-1}},
\end{align*}
where we have used the fact that $\mathbb{P}_0 \Psi^*_{\perp} =\left(\Psi_0, \Psi^*_{\perp} \right)_{\widehat{\mathcal{L}}^{2} } \Psi_0$.

Notice however that the second term in the product above satisfies
\begin{align}\label{eq:bounded_below_1}
	\left\langle \Psi_0, e^{(\mathcal{T}^*)^{\dagger}}  \left(H- \mathcal{E}^*\right) e^{-(\mathcal{T}^*)^{\dagger}} \widetilde{\Psi}\right\rangle_{\widehat{\mathcal{H}}^{1} \times \widehat{\mathcal{H}}^{-1}}  = \left\langle  \left(H- \mathcal{E}^*\right) e^{\mathcal{T}^*} \Psi_0,  e^{-(\mathcal{T}^*)^{\dagger}}\widetilde{\Psi}\right\rangle_{\widehat{\mathcal{H}}^{-1} \times \widehat{\mathcal{H}}^{1}} =0,
\end{align}
where the last step follows from the fact that $He^{\mathcal{T}^*} \Psi_0 = H\Psi^*= \mathcal{E}^*\Psi^*$ by assumption. Thus, the term (II) is identically zero for any choice of $\Psi^*_{\perp} \in\{\Psi^{*}\}^{\perp} \subset \widehat{\mathcal{H}}^1$ as claimed, and we need only estimate the term (I).

An easy simplification reveals that 
\begin{align}\label{eq:bounded_below_2}
	(\rm I)=\left\langle \Psi^*_{\perp}, \left(H- \mathcal{E}^*\right) e^{-(\mathcal{T}^*)^{\dagger}} \widetilde{\Psi}\right\rangle_{\widehat{\mathcal{H}}^{1} \times \widehat{\mathcal{H}}^{-1}}.
\end{align}

Thanks to the ellipticity of the electronic Hamiltonian $H \colon \widehat{\mathcal{H}}^1 \rightarrow \widehat{\mathcal{H}}^{-1}$ and the simplicity of the eigenvalue $\mathcal{E}^*$, it is easy to deduce that the shifted Hamiltonian $H - \mathcal{E}^* \colon \widehat{\mathcal{H}}^1 \rightarrow \widehat{\mathcal{H}}^{-1}$ satisfies an inf-sup condition on $\{\Psi^{*}\}^{\perp} \subset \widehat{\mathcal{H}}^1$ (see also Remark \ref{rem:interp_const} for a detailed argument). In order to make use of this result and bound the term (I), we need only show that $\Psi^*_{\perp}$ and $ e^{-(\mathcal{T}^*)^{\dagger}} \widetilde{\Psi}$ are both elements of $\{\Psi^*\}^{\perp}$. The former inclusion is true by definition of $\Psi^*_{\perp}$ and as for latter, we see that
\begin{align*}
	\left(\Psi^*, e^{-(\mathcal{T}^*)^{\dagger}} \widetilde{\Psi}\right)_{\widehat{\mathcal{L}}^2}   = \left(e^{\mathcal{T}^*}\Psi_0, e^{-(\mathcal{T}^*)^{\dagger}} \widetilde{\Psi}\right)_{\widehat{\mathcal{L}}^2}= \left(\Psi_0,\widetilde{\Psi}\right)_{\widehat{\mathcal{L}}^2}= 0,
\end{align*}
where we have used the fact that $\widetilde{\Psi} \in \widetilde{\mathcal{V}}= \{\Psi_0\}^{\perp}$ by definition.

We can therefore deduce from Equation~\eqref{eq:bounded_below_2} that
\begin{align}\label{eq:bounded_below_3}
	\sup_{\Psi^*_{\perp} \in \{\Psi^{*}\}^{\perp}} \frac{\left\vert \left\langle \Psi^*_{\perp}, \left(H-\mathcal{E}^*\right)e^{-(\mathcal{T}^*)^{\dagger}} \widetilde{\Psi} \right\rangle_{\widehat{\mathcal{H}}^1 \times \widehat{\mathcal{H}}^{-1}} \right\vert }{\Vert \Psi^*_{\perp}\Vert_{\widehat{\mathcal{H}}^1 } }\geq \gamma \left\Vert e^{-(\mathcal{T}^*)^{\dagger}} \widetilde{\Psi}\right\Vert_{\widehat{\mathcal{H}}^{1}},
\end{align}
where $\gamma> 0$ is the inf-sup constant of the shifted Hamiltonian $H-\mathcal{E}^*$ on $\{\Psi^{*}\}^{\perp} \subset \widehat{\mathcal{H}}^1$. 

Recalling now that $\widetilde{\Psi}\in \widetilde{\mathcal{V}}$ was arbitrary and combining the estimates \eqref{eq:bounded_below_1}-\eqref{eq:bounded_below_3} with Equation \eqref{eq:bounded_below_0} we obtain that for all $\widetilde{\Psi} \in \widetilde{\mathcal{V}}$ it holds that
\begin{align*}
	\Vert \mathcal{A}(\bt^*)^{\dagger} \widetilde{\Psi}\Vert_{\widetilde{\mathcal{V}}^*}&= \sup_{0\neq \widetilde{\Phi} \in \widetilde{\mathcal{V}}} \frac{\big \vert \left \langle \widetilde{\Phi}, \mathcal{A}(\bt^*)^{\dagger}\widetilde{\Psi}\right \rangle_{\widehat{\mathcal{H}}^{1} \times \widehat{\mathcal{H}}^{-1}} \big \vert}{\Vert \widetilde{\Phi} \Vert_{\widehat{\mathcal{H}}^{1}} } \geq \sup_{0\neq {\Psi}_{\perp}^* \in  \{\Psi^{*}\}^{\perp}}\frac{\big\vert \left \langle \widetilde{\Phi}_{\Psi_{\perp}}, \mathcal{A}(\bt^*)^{\dagger}\widetilde{\Psi}\right \rangle_{\widehat{\mathcal{H}}^{1} \times \widehat{\mathcal{H}}^{-1}}\big\vert }{\Vert \widetilde{\Phi}_{\Psi_{\perp}} \Vert_{\widehat{\mathcal{H}}^{1}} }\\[0.75em]
	&=  \sup_{0\neq{\Psi}^*_{\perp} \in  \{\Psi^{*}\}^{\perp}} \frac{\left\vert \left\langle \Psi^*_{\perp}, \left(H-\mathcal{E}^*\right)e^{-(\mathcal{T}^*){\dagger}} \widetilde{\Psi} \right\rangle_{\widehat{\mathcal{H}}^1 \times \widehat{\mathcal{H}}^{-1}} \right\vert }{\Vert \mathbb{P}_0^{\perp} e^{-\mathcal{T}(\bt^*)} \Psi^*_{\perp}\Vert_{\widehat{\mathcal{H}}^1 } }\\
	&\geq \frac{1}{\Vert \mathbb{P}_0^{\perp} e^{-\mathcal{T}^*}\Vert_{\widehat{\mathcal{H}}^1 \to  \widehat{\mathcal{H}}^1}}\; \sup_{0\neq\Psi^*_{\perp} \in  \{\Psi^{*}\}^{\perp}} \frac{\left\vert \left\langle \Psi^*_{\perp}, \left(H-\mathcal{E}^*\right)e^{-(\mathcal{T}^*)^{\dagger}} \widetilde{\Psi}\right\rangle_{\widehat{\mathcal{H}}^1 \times \widehat{\mathcal{H}}^{-1}} \right\vert }{\Vert \Psi^*_{\perp}\Vert_{\widehat{\mathcal{H}}^1 } }\\[0.75em]
	&\geq \frac{\gamma}{\Vert \mathbb{P}_0^{\perp} e^{-\mathcal{T}^*}\Vert_{\widehat{\mathcal{H}}^1 \to  \widehat{\mathcal{H}}^1}}\; \left\Vert e^{-(\mathcal{T}^*)^{\dagger}} \widetilde{\Psi}\right\Vert_{\widehat{\mathcal{H}}^{1}}\\[0.75em]
	&\geq \frac{\gamma}{\Vert \mathbb{P}_0^{\perp} e^{-\mathcal{T}^*}\Vert_{\widehat{\mathcal{H}}^1 \to  \widehat{\mathcal{H}}^1} \Vert e^{(\mathcal{T}^*)^{\dagger}} \Vert_{\widehat{\mathcal{H}}^1 \to  \widehat{\mathcal{H}}^1} }\; \big\Vert \widetilde{\Psi}\big\Vert_{\widehat{\mathcal{H}}^{1}},
\end{align*}
where the final step follows from the fact that $e^{-(\mathcal{T}^*)^{\dagger}} \colon \widehat{\mathcal{H}}^{1} \rightarrow \widehat{\mathcal{H}}^{1}$ is a bijection. Defining now the constant $\Theta \in (0, \infty)$ as
\begin{align}\label{eq:Theta}
	\Theta :=  \Vert e^{(\mathcal{T}^*)^{\dagger}}\Vert_{\widehat{\mathcal{H}}^{1}\to \widehat{\mathcal{H}}^{1}} \Vert \mathbb{P}_0^{\perp} e^{-\mathcal{T}^*}\Vert_{\widehat{\mathcal{H}}^{1} \to \widehat{\mathcal{H}}^{1}}, 
\end{align}
and recalling that $\widetilde{\Psi} \in \widetilde{\mathcal{V}}$ was arbitrary, we deduce that
\begin{align*}
	\forall \widetilde{\Psi} \in \widetilde{\mathcal{V}} \colon \quad \Vert \mathcal{A}(\bt^*)^{\dagger} \widetilde{\Psi}\Vert_{\widetilde{\mathcal{V}}^*} \geq \frac{\gamma}{\Theta} \Vert\Psi\Vert_{\widehat{\mathcal{H}}^{1}},
\end{align*}
which completes the proof of the second step.

Combining the conclusions of \textbf{Step 1} and \textbf{Step 2} we deduce that the adjoint operator $\mathcal{A}(\bt^*)^{\dagger} \colon \widetilde{\mathcal{V}} \rightarrow \widetilde{\mathcal{V}}^*$ is an isomorphism, and from this it follows that the operator $\mathcal{A}(\bt^*) \colon \widetilde{\mathcal{V}} \rightarrow \widetilde{\mathcal{V}}^*$ is also an isomorphism. 
\end{proof}

Equipped with Theorem \ref{thm:CC} and recalling the discussion following Notation \ref{not:1}, we immediately obtain the desired invertibility result for the coupled cluster Fr\'echet derivative at any zero $\bt^* \in  \mathbb{V}$ of the coupled cluster function that is associated with a non-degenerate, intermediately normalised eigenfunction $\Psi^* \in \widehat{\mathcal{H}}^1$ of the electronic Hamiltonian.

\begin{corollary}[Invertibility of the Coupled Cluster Fr\'echet Derivative at $\bt^*$]\label{cor:CC_der_inv}~

Let the coupled cluster function $\mathcal{f} \colon \mathbb{V} \rightarrow \mathbb{V}^* $ be defined through Definition \ref{def:CC_function}, for any $\bt \in \mathbb{V}$ let $\mD \mathcal{f}(\bt)$ denote the Fr\'echet derivative of the coupled cluster function as defined through Equation \eqref{eq:CC_Jac_1}, let $\bt^* \in \mathbb{V}$ denote a zero of the coupled cluster function corresponding to an intermediately normalised eigenfunction $\Psi^* \in \widehat{\mathcal{H}}^1$ of the electronic Hamiltonian $H\colon\widehat{\mathcal{H}}^1 \rightarrow \widehat{\mathcal{H}}^{-1}$ with simple, isolated eigenvalue $\mathcal{E}^*$, let $\gamma >0$ denote the inf-sup constant of the shifted Hamiltonian $H-\mathcal{E}^*$ on $ \{\Psi^*\}^{\perp} \subset \widehat{\mathcal{H}}^1$, and let $\Theta > 0$ denote the constant defined through Equation \eqref{eq:Theta}. Then $\mD \mathcal{f}(\bt^*) \colon \mathbb{V} \rightarrow \mathbb{V}^*$ is an isomorphism and it holds that
\begin{align*}
	\Vert \mD \mathcal{f}(\bt^*)^{-1}\Vert_{\mathbb{V}^* \to \mathbb{V}} \leq \frac{\Theta}{\gamma}.
\end{align*}
\end{corollary}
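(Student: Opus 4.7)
The plan is to transfer the isomorphism property from $\mathcal{A}(\bt^*) \colon \widetilde{\mathcal{V}} \to \widetilde{\mathcal{V}}^*$, which has already been established in Theorem \ref{thm:CC}, to $\mD \mathcal{f}(\bt^*) \colon \mathbb{V} \to \mathbb{V}^*$ via the natural isometric identification between the coefficient space $\mathbb{V}$ and the $\widehat{\mathcal{L}}^2$-orthogonal complement space $\widetilde{\mathcal{V}}$. The key observation is that by the very definition of the inner product on $\mathbb{V}$ in Equation \eqref{eq:coeff_inner}, together with Remark \ref{rem:comp}, the linear map $\mathcal{U} \colon \mathbb{V} \to \widetilde{\mathcal{V}}$ defined by $\mathcal{U}\bt := \sum_{\mu \in \mathcal{I}} \bt_{\mu}\Psi_{\mu} = \mathcal{T}\Psi_0$ is an isometric isomorphism, and consequently the dual map $\mathcal{U}^* \colon \widetilde{\mathcal{V}}^* \to \mathbb{V}^*$ is also an isometric isomorphism.

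Next, applying Corollary \ref{cor:CC_der} (Equation \eqref{eq:CC_Jac_1prime}) and recalling Definition \ref{def:A_op} of $\mathcal{A}(\bt^*)$, I would rewrite the coupled cluster Fr\'echet derivative at $\bt^*$ in the factored form
\begin{align*}
\langle \bw, \mD \mathcal{f}(\bt^*)\bs\rangle_{\mathbb{V} \times \mathbb{V}^*} = \langle \mathcal{U}\bw, \mathcal{A}(\bt^*) \mathcal{U}\bs\rangle_{\widehat{\mathcal{H}}^1 \times \widehat{\mathcal{H}}^{-1}} \qquad \forall \bs, \bw \in \mathbb{V},
\end{align*}
which means precisely that $\mD\mathcal{f}(\bt^*) = \mathcal{U}^* \circ \mathcal{A}(\bt^*) \circ \mathcal{U}$. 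Since $\mathcal{A}(\bt^*)$ is an isomorphism by Theorem \ref{thm:CC}, and it is flanked by the two isometric isomorphisms $\mathcal{U}$ and $\mathcal{U}^*$, it follows immediately that $\mD\mathcal{f}(\bt^*)$ is also an isomorphism, and moreover the operator norms of the inverses agree: $\Vert \mD\mathcal{f}(\bt^*)^{-1}\Vert_{\mathbb{V}^* \to \mathbb{V}} = \Vert \mathcal{A}(\bt^*)^{-1}\Vert_{\widetilde{\mathcal{V}}^* \to \widetilde{\mathcal{V}}}$.

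It remains to produce the explicit bound $\Theta/\gamma$. For this I would invoke the bounded-below estimate established as \textbf{Step 2} in the proof of Theorem \ref{thm:CC}, namely $\Vert \mathcal{A}(\bt^*)^{\dagger}\widetilde{\Psi}\Vert_{\widetilde{\mathcal{V}}^*} \geq (\gamma/\Theta)\Vert\widetilde{\Psi}\Vert_{\widehat{\mathcal{H}}^1}$ for all $\widetilde{\Psi} \in \widetilde{\mathcal{V}}$; this inf-sup lower bound yields $\Vert (\mathcal{A}(\bt^*)^{\dagger})^{-1}\Vert_{\widetilde{\mathcal{V}}^* \to \widetilde{\mathcal{V}}} \leq \Theta/\gamma$, and by the standard Hilbert-space identity $\Vert \mathcal{A}(\bt^*)^{-1}\Vert = \Vert(\mathcal{A}(\bt^*)^{\dagger})^{-1}\Vert$ (equivalently, the inf-sup constants of a Hilbert-space isomorphism and of its adjoint coincide), we obtain the desired bound. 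The main obstacle, honestly, is not technical but notational: one must keep straight the two parallel descriptions of the Fr\'echet derivative, one living on sequences in $\mathbb{V}$ and the other on wave-functions in $\widetilde{\mathcal{V}}$, and verify that the isometric identification $\mathcal{U}$ genuinely intertwines them. Once this bookkeeping is performed, the corollary is essentially a direct restatement of Theorem \ref{thm:CC} together with a one-line duality argument.
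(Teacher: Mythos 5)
Your proposal is correct and follows essentially the same route as the paper: the paper also obtains this corollary by identifying $\mathbb{V}$ isometrically with $\widetilde{\mathcal{V}}$ (so that $\mD\mathcal{f}(\bt^*)$ and $\mathcal{A}(\bt^*)$ are isomorphisms simultaneously) and then invoking Theorem \ref{thm:CC}, with the bound $\Theta/\gamma$ coming from the bounded-below estimate for $\mathcal{A}(\bt^*)^{\dagger}$ in Step 2 together with the standard equality of the inf-sup constants of an isomorphism and its adjoint. Your write-up merely makes explicit the factorisation $\mD\mathcal{f}(\bt^*)=\mathcal{U}^*\circ\mathcal{A}(\bt^*)\circ\mathcal{U}$ that the paper leaves implicit.
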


Having completed our study of the coupled cluster Fr\'echet derivative, we are now finally ready to state the main result of this section, namely the local well-posedness of the single reference coupled cluster equations. As mentioned at the beginning of this section, we will do so by appealing to a classical result from non-linear numerical analysis.

\begin{theorem}[Local Well-Posedness of the Coupled Cluster Equations at $\bt^*$]\label{thm:CC_untruncated}~

Let the coupled cluster function $\mathcal{f} \colon \mathbb{V} \rightarrow \mathbb{V}^* $ be defined through Definition \ref{def:CC_function}, let $\bt^* \in \mathbb{V}$ denote a zero of the coupled cluster function corresponding to an intermediately normalised eigenfunction $\Psi^* \in \widehat{\mathcal{H}}^1$ of the electronic Hamiltonian~$H \colon\widehat{\mathcal{H}}^1 \rightarrow \widehat{\mathcal{H}}^{-1}$ with simple, isolated eigenvalue $\mathcal{E}^*$, let $\gamma >0$ denote the inf-sup constant of the shifted Hamiltonian $H-\mathcal{E}^*$ on $\{\Psi^{*}\}^{\perp} \subset \widehat{\mathcal{H}}^1$, let $\Theta > 0$ denote the constant defined through Equation \eqref{eq:Theta}, let the continuity constant $\alpha_{\bt^*} >0$ and the Lipschitz continuity function $\mL_{\bt^*} \colon \mathbb{R}_+ \rightarrow \mathbb{R}_+$ be defined according to Notation \ref{not:1}, and define the constant
\begin{align*}
	\mR:= \min_{\delta >0} \left\{\delta, ~\frac{\gamma}{\mL_{\bt^*}(\delta) \Theta},~ 2\frac{\alpha_{\bt^*}}{\mL_{\bt^*}(\delta)} \right\}.
\end{align*}

Then $\mathcal{f}\big(\mB_{\mR}(\bt^*)\big)$ is an open subset  $\mathbb{V}^*$, the restriction of $\mathcal{f}$ to $\mB_{\mR}(\bt^*)$ is a \emph{diffeomorphism} and for all $ \bs \in \mB_{\mR}(\bt^*)$ we have the error estimate
\begin{equation}\label{eq:CC_untruncated_error}
	\frac{1}{2} \frac{1}{\alpha_{\bt^*} } \Vert \mathcal{f}(\bs) \Vert_{\mathbb{V}^*} \leq \Vert \bt^* - \bs \Vert_{\mathbb{V}} \leq 2 \frac{\Theta}{\gamma} \Vert \mathcal{f}(\bs) \Vert_{\mathbb{V}^*}.
\end{equation}

In particular, $\bt^*$ is the unique solution of the continuous coupled cluster equations \eqref{eq:CC} in the open ball~$\mB_{\mR}(\bt^*)$.
\end{theorem}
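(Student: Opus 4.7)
The plan is to combine the invertibility of $\mD\mathcal{f}(\bt^*)$ from Corollary \ref{cor:CC_der_inv} with the local Lipschitz continuity of $\bt \mapsto \mD\mathcal{f}(\bt)$ from Corollary \ref{cor:CC_der_Lipschitz} through a first-order Taylor expansion with integral remainder. The starting point is the identity
\begin{equation*}
    \mathcal{f}(\bs) = \mathcal{f}(\bs) - \mathcal{f}(\bt^*) = \mD\mathcal{f}(\bt^*)(\bs-\bt^*) + R(\bs),
\end{equation*}
where $R(\bs) := \int_0^1 \bigl(\mD\mathcal{f}(\bt^* + \tau(\bs-\bt^*)) - \mD\mathcal{f}(\bt^*)\bigr)(\bs-\bt^*) \, d\tau$, obtained from the fundamental theorem of calculus applied to the smooth path $\tau \mapsto \mathcal{f}(\bt^*+\tau(\bs-\bt^*))$ together with the fact that $\mathcal{f}(\bt^*) = 0$. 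Whenever $\bs \in \mB_\delta(\bt^*)$, the Lipschitz function $\mL_{\bt^*}$ from Notation \ref{not:1} yields the crucial quadratic remainder estimate $\|R(\bs)\|_{\mathbb{V}^*} \leq \tfrac{1}{2}\mL_{\bt^*}(\delta)\|\bs-\bt^*\|_{\mathbb{V}}^2$.

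From here the two residual estimates \eqref{eq:CC_untruncated_error} follow by triangle inequality. For the upper bound on $\|\bs-\bt^*\|_{\mathbb{V}}$, I would combine the reverse triangle inequality with the coercivity estimate $\|\mD\mathcal{f}(\bt^*)(\bs-\bt^*)\|_{\mathbb{V}^*} \geq (\gamma/\Theta)\|\bs-\bt^*\|_{\mathbb{V}}$ (equivalent to the bound $\|\mD\mathcal{f}(\bt^*)^{-1}\|_{\mathbb{V}^*\to\mathbb{V}} \leq \Theta/\gamma$ from Corollary \ref{cor:CC_der_inv}) to deduce
\begin{equation*}
    \|\mathcal{f}(\bs)\|_{\mathbb{V}^*} \geq \tfrac{\gamma}{\Theta}\|\bs-\bt^*\|_{\mathbb{V}} - \tfrac{1}{2}\mL_{\bt^*}(\delta)\|\bs-\bt^*\|_{\mathbb{V}}^2.
\end{equation*}
The constraint $\|\bs-\bt^*\|_{\mathbb{V}} \leq \gamma/(\mL_{\bt^*}(\delta)\Theta)$ then absorbs the quadratic correction into at most half of the linear term, which rearranges into the claimed bound $\|\bs-\bt^*\|_{\mathbb{V}} \leq (2\Theta/\gamma)\|\mathcal{f}(\bs)\|_{\mathbb{V}^*}$. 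Symmetrically, the triangle inequality together with $\|\mD\mathcal{f}(\bt^*)(\bs-\bt^*)\|_{\mathbb{V}^*} \leq \alpha_{\bt^*}\|\bs-\bt^*\|_{\mathbb{V}}$ gives
\begin{equation*}
    \|\mathcal{f}(\bs)\|_{\mathbb{V}^*} \leq \alpha_{\bt^*}\|\bs-\bt^*\|_{\mathbb{V}} + \tfrac{1}{2}\mL_{\bt^*}(\delta)\|\bs-\bt^*\|_{\mathbb{V}}^2,
\end{equation*}
and the constraint $\|\bs-\bt^*\|_{\mathbb{V}} \leq 2\alpha_{\bt^*}/\mL_{\bt^*}(\delta)$ forces the quadratic term to be dominated by the linear one, producing $\|\mathcal{f}(\bs)\|_{\mathbb{V}^*} \leq 2\alpha_{\bt^*}\|\bs-\bt^*\|_{\mathbb{V}}$, i.e., the lower estimate in \eqref{eq:CC_untruncated_error}. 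The three quantities $\delta$, $\gamma/(\mL_{\bt^*}(\delta)\Theta)$, and $2\alpha_{\bt^*}/\mL_{\bt^*}(\delta)$ entering the definition of $\mR$ correspond precisely to these three admissibility conditions.

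The remaining assertions follow almost for free. The diffeomorphism statement and the openness of the image $\mathcal{f}(\mB_{\mR}(\bt^*))$ follow from the classical inverse function theorem in Banach spaces applied to the $\mathscr{C}^\infty$ map $\mathcal{f}$ at $\bt^*$, whose derivative is an isomorphism by Corollary \ref{cor:CC_der_inv}; the choice of $\mR$ above together with the upper residual estimate certifies injectivity on $\mB_{\mR}(\bt^*)$, which upgrades the local diffeomorphism to a diffeomorphism on the full ball. Local uniqueness of $\bt^*$ as a zero is then an immediate consequence of the upper estimate applied with $\mathcal{f}(\bs) = 0$. The main conceptual difficulty of this well-posedness analysis does not in fact lie in this final theorem, which is a standard residual-based packaging once invertibility and Lipschitz continuity of the derivative are available; the real work has already been carried out upstream in the inf-sup analysis of $\mathcal{A}(\bt^*)^{\dagger}$ performed in Theorem \ref{thm:CC}, which is precisely the step that allows the present estimates to furnish \emph{guaranteed positive} constants without assuming smallness of $\bt^*$.
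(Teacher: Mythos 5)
Your proposal is correct and follows essentially the same route as the paper: the paper's proof simply invokes the Banach-space inverse function theorem and cites \cite[Proposition 2.1]{MR1213837} for the two-sided residual estimate, with the hypotheses verified via Proposition \ref{prop:CC_der} and Corollaries \ref{cor:CC_der_Lipschitz} and \ref{cor:CC_der_inv}, and your Taylor-expansion-with-integral-remainder argument is precisely the standard proof of that cited proposition, with $\alpha_{\bt^*}$, $\Theta/\gamma$ and $\mL_{\bt^*}(\delta)$ playing the same roles and the three admissibility conditions matching the definition of $\mR$. The only point you gloss over is that injectivity of $\mathcal{f}$ on all of $\mB_{\mR}(\bt^*)$ (needed to upgrade the local diffeomorphism to a diffeomorphism of the ball) follows not from the residual estimate relative to $\bt^*$ but from the same perturbation argument applied between two arbitrary points of the ball; this is immediate with your ingredients, so it is a presentational gap rather than a mathematical one.
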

\begin{proof}
The fact that the image under $\mathcal{f}$ of the open ball $\mB_{\mR}(\bt^*)$ is itself open and that $\mathcal{f}$ is a local diffeomorphism is a direct consequence of the inverse function theorem for Banach spaces (see, e.g., \cite[Chapter 9]{MR0238472}) while the error estimate is a direct application of \cite[Proposition 2.1]{MR1213837}. The fact that the assumptions of both results are indeed fulfilled by the coupled cluster function $\mathcal{f}$ is a consequence of Proposition \ref{prop:CC_der} and Corollaries \ref{cor:CC_der_Lipschitz} and \ref{cor:CC_der_inv}. 
\end{proof}

Next, let us comment on the constants that appear in the error estimate offered by Theorem \ref{thm:CC_untruncated}.

\begin{remark}[Interpretation of the Constants Appearing in Error Estimate \eqref{eq:CC_untruncated_error}] \label{rem:interp_const}~

Consider the setting of Theorem \ref{thm:CC_untruncated}. From the point of view of a posteriori error quantification, it is important to gain a better understanding of the constants $\gamma>0$ and $\Theta >0$.

Let us recall that the $\gamma >0$ is the inf-sup constant of the shifted Hamiltonian $H-\mathcal{E}^*$ on $\{\Psi^{*}\}^{\perp} \subset \widehat{\mathcal{H}}^1$. A crude lower bound for this constant can be obtained through the following procedure.

We begin by noting that the shifted Hamiltonian $H- \mathcal{E}^*_{\rm GS}+1$ defines a coercive operator on $\widehat{\mathcal{H}}^1$. Since the electronic Hamiltonian is additionally self-adjoint, we can introduce a new norm on $\widehat{\mathcal{H}}^1$ by setting
\begin{align*}
	\forall \Phi \in \widehat{\mathcal{H}}^1 \colon \qquad ||| \Phi |||^2_{\widehat{\mathcal{H}}^1} :=  \left\langle \Phi, \left(H- \mathcal{E}^*_{\rm GS}+1\right) \Phi \right \rangle_{\widehat{\mathcal{H}}^1 \times \widehat{\mathcal{H}}^{-1}},
\end{align*}
and it is clear that this new norm is equivalent to the canonical $\Vert \cdot \Vert_{\widehat{\mathcal{H}}^1}$ norm, i.e., ~$\exists c_{\rm equiv}>1$ such that
\begin{align*}
	\forall \Phi \in \widehat{\mathcal{H}}^1 \colon \quad \frac{1}{c_{\rm equiv}} ||| \Phi |||_{\widehat{\mathcal{H}}^1} \leq \Vert \Phi \Vert_{\widehat{\mathcal{H}}^1}\leq c_{\rm equiv} ||| \Phi |||_{\widehat{\mathcal{H}}^1}.
\end{align*}

In particular, the ellipticity of the electronic Hamiltonian given by Inequality \eqref{eq:ellip} also holds with respect to the new~$|||~\cdot~|||_{\widehat{\mathcal{H}}^1}$ norm and we have
\begin{align}\label{eq:rem_inter_1}
	\forall \Phi \in \widehat{\mathcal{H}}^1 \colon \quad  \left \langle \Phi, \left(H-\mathcal{E}^*\right) \Phi \right \rangle_{\widehat{\mathcal{H}}^1 \times \widehat{\mathcal{H}}^{-1}} \geq \frac{1}{4 c_{\rm equiv}} ||| \Phi |||^2_{\widehat{\mathcal{H}}^1} -\left(9NZ^2 -\mathcal{E}^*-\frac{1}{4}\right ) \Vert \Phi \Vert^2_{\widehat{\mathcal{L}}^2}. 
\end{align}

Moreover, the norm $|||~\cdot~|||_{\widehat{\mathcal{H}}^1}$ also induces a new dual norm $|||~\cdot~|||_{\widehat{\mathcal{H}}^{-1}}$ on the space $\widehat{\mathcal{H}}^{-1}$, and this new norm is also equivalent to the canonical dual norm $\Vert \cdot \Vert_{\widehat{\mathcal{H}}^{-1}}$.

Next, we claim that for any $\Phi \in \{\Psi^*\}^{\perp} \subset \widehat{\mathcal{H}}^1$ there exists ${\Phi}_{\rm flip} \in \{\Psi^*\}^{\perp}$ such that
\begin{align}\label{eq:rem_inter_2}
	\left \langle {\Phi}_{\rm flip}, \left(H-\mathcal{E}^*\right) \Phi \right \rangle_{\widehat{\mathcal{H}}^1 \times \widehat{\mathcal{H}}^{-1}} \geq \Lambda^* \Vert \Phi \Vert^2_{\widehat{\mathcal{L}}^2}, ~ \text{ where } ~ \Lambda^*:= \underset{\substack{\lambda \in \sigma(H)\\ \lambda \neq \mathcal{E}^* } }{\inf}\vert \lambda - \mathcal{E}^*\vert>0 ~ \text{ is the spectral gap at } \mathcal{E}^*. 
\end{align}

To see this, assume that $(\mathcal{E}^*, \Psi^*)$ is the $J^{\rm th}$ eigenpair of the electronic Hamiltonian, ordered non-decreasingly and counting multiplicity. Then we can write any $\Phi \in \{\Psi^*\}^{\perp}$ in the form 
\begin{align*}
	\Phi = \sum_{\ell=1}^{J+1} \mathbb{P}_{\ell}\Phi + \Phi^{\perp}, 
\end{align*}
where each $\mathbb{P}_{\ell}\colon \widehat{\mathcal{H}}^1 \rightarrow \widehat{\mathcal{H}}^1$ denotes the $\widehat{\mathcal{L}}^2$-orthogonal projector onto the span of the $\ell^{\rm th}$ eigenfunction, $\Phi^{\perp}:= \Phi - \sum_{\ell=1}^{J+1} \mathbb{P}_{j}\Phi$, and we emphasise that $\mathbb{P}_{J}\Phi=0$ since $\Phi \in \{\Psi^*\}^{\perp}$. Consequently, for any $\Phi \in \{\Psi^*\}^{\perp}$, we may define $\Phi_{\rm flip} \in \{\Psi^*\}^{\perp}$ as
\begin{align*}
	\Phi_{\rm flip} &:= -\sum_{\ell=1}^{J-1} \mathbb{P}_{\ell}\Phi + \mathbb{P}_{J+1}\Phi +\Phi^{\perp},\\ 
	\intertext{and a direct calculation shows that}
	\left \langle \Phi_{\rm flip}, \left(H-\mathcal{E}^*\right) \Phi \right \rangle_{\widehat{\mathcal{H}}^1 \times \widehat{\mathcal{H}}^{-1}} &\geq \min \left \{\mathcal{E}^*-\mathcal{E}_{J-1}, \mathcal{E}_{J+1}-\mathcal{E}^*  \right \} \Vert \Phi \Vert^2_{\widehat{\mathcal{L}}^2}=:\Lambda^* \Vert \Phi \Vert^2_{\widehat{\mathcal{L}}^2},
\end{align*}
where we have used $\mathcal{E}_{J-1}, \mathcal{E}_{J+1}$ to denote the $J-1$ and $J+1$ eigenvalues of the electronic Hamiltonian. The claim now readily follows. Additionally, it is readily verified that for any $\Phi \in \{\Psi^*\}^{\perp}$ with $\Phi_{\rm flip}$ constructed according to the above procedure, it holds that $||| \Phi_{\rm flip} |||_{\widehat{\mathcal{H}}^1}  = ||| \Phi |||_{\widehat{\mathcal{H}}^1} $.

Defining now the constant $q:= \dfrac{\Lambda^*}{\Lambda^*+ \left(9NZ^2 -\mathcal{E}^*-\frac{1}{4}\right ) } \in (0, 1)$ and combining the Estimates \eqref{eq:rem_inter_1} and \eqref{eq:rem_inter_2}, we deduce that for all $\Phi \in \{\Psi^*\}^{\perp}$ it holds that

\begin{align*}
	\sup_{0\neq \Psi \in \{\Psi^*\}^{\perp}} \frac{\left\vert \left \langle \Psi, (H-\mathcal{E}^*)\Phi \right \rangle_{\widehat{\mathcal{H}}^1 \times \widehat{\mathcal{H}}^{-1}} \right\vert}{||| \Psi |||_{\widehat{\mathcal{H}}^1}}&= q\sup_{0\neq \Psi \in \{\Psi^*\}^{\perp}} \frac{\left\vert \left \langle \Psi, (H-\mathcal{E}^*)\Phi \right \rangle_{\widehat{\mathcal{H}}^1 \times \widehat{\mathcal{H}}^{-1}} \right\vert}{||| \Psi |||_{\widehat{\mathcal{H}}^1}}\\[1em]
	&+ (1-q) \sup_{0\neq \Psi \in \{\Psi^*\}^{\perp}} \frac{\left\vert \left \langle \Psi, (H-\mathcal{E}^*)\Phi \right \rangle_{\widehat{\mathcal{H}}^1 \times \widehat{\mathcal{H}}^{-1}} \right\vert}{||| \Psi |||_{\widehat{\mathcal{H}}^1}}\\[1em]
	&\geq q\frac{\left\vert \left \langle \Phi, (H-\mathcal{E}^*)\Phi \right \rangle_{\widehat{\mathcal{H}}^1 \times \widehat{\mathcal{H}}^{-1}} \right\vert}{||| \Phi |||_{\widehat{\mathcal{H}}^1}}\\[1em]
	&+ (1-q)  \frac{\left\vert \left \langle \Phi_{\rm flip}, (H-\mathcal{E}^*)\Phi \right \rangle_{\widehat{\mathcal{H}}^1 \times \widehat{\mathcal{H}}^{-1}} \right\vert}{||| \Phi_{\rm flip} |||_{\widehat{\mathcal{H}}^1}}\\[1em]
	&\geq q\frac{1}{||| \Phi |||_{\widehat{\mathcal{H}}^1}}\left(\frac{1}{4 \;c_{\rm equiv}} ||| \Phi |||^2_{\widehat{\mathcal{H}}^1} -  \left(9NZ^2 -\mathcal{E}^*-\frac{1}{4}\right ) \Vert \Phi \Vert^2_{\widehat{\mathcal{L}}^2}\right) \\[1em]
	&+(1-q) \frac{1}{||| \Phi |||_{\widehat{\mathcal{H}}^1}}\Lambda^*  \Vert \Phi \Vert^2_{\widehat{\mathcal{L}}^2}\\[1em]
	&= q\frac{1}{4\; c_{\rm equiv}} ||| \Phi |||_{\widehat{\mathcal{H}}^1},
\end{align*}
where the cancellations in the last step occurs due to the definition of $q \in (0,1)$.

Recalling the definition of the constant $q$, we see that the inf-sup constant $\gamma$ is lower bounded by the product of the spectral gap and a factor depending on $\Lambda^*, \mathcal{E}^*, N$, and $Z$. Consequently, the residual-based CC error estimate \eqref{eq:CC_untruncated_error} may be expected to degrade if the spectral gap degrades.

Coming now to the constant $\Theta$, we see that it is simply the product of two operator norms involving the exponential cluster operator and its adjoint. Thanks to the continuity of the mapping $\mathbb{V} \ni \bt \mapsto e^{-\mathcal{T}(\bt)} \colon \widehat{\mathcal{H}}^1\rightarrow \widehat{\mathcal{H}}^1$, (and its adjoint) we deduce that these operators norms will be large when $\Vert \bt\Vert_{\mathbb{V}}$ is large, and therefore the residual-based CC error estimate \eqref{eq:CC_untruncated_error} is expected to degrade if $\Vert \bt\Vert_{\mathbb{V}}$ is large.
\end{remark}

We conclude this section by emphasising, in particular, that if the ground state energy of the electronic Hamiltonian $H \colon \widehat{\mathcal{H}}^1  \rightarrow \widehat{\mathcal{H}}^{-1} $ is simple, and the chosen reference determinant $\Psi_0$ is not orthogonal to the corresponding ground state wave-function, then the continuous coupled cluster equations \eqref{eq:Galerkin_Full_CC} are locally well-posed, and we have access to the residual-based error estimates given by Theorem \ref{thm:CC_untruncated}.
	
	\section{Well-posedness of the Full Coupled Cluster Equations in a Finite Basis}\label{sec:6}~
	
	Having understood the local well-posedness of the continuous coupled cluster function, the next step in our analysis is to study the discrete coupled cluster equations~\eqref{eq:CC_truncated}. Unfortunately, obtaining a local well-posedness result for an arbitrary choice of excitation subset or $N$-particle basis set is a highly non-trivial exercise. Indeed, as the subsequent exposition will show (see Lemma \ref{lem:Galerkin_inv_1} and Theorem \ref{thm:Galerkin_full_CC} below), our discrete local well-posedness analysis depends on being able to demonstrate that certain discrete inf-sup conditions hold and the establishment of these conditions for arbitrary discretisations is not obvious. For the purpose of this article therefore, we will limit ourselves to an analysis of the so-called Full-Coupled Cluster equations in a finite basis. The extension of our analysis to more general discretisations (the so-called \emph{truncated} CC equations \cite[Chapter 13]{helgaker2014molecular}) will be addressed in a forthcoming contribution.

Throughout this section, we assume the settings of Sections \ref{sec:2}-\ref{sec:5}. In particular, we will frequently refer to the notions of Section \ref{sec:3}. Let $\{\psi_{j}\}_{j \in \mathbb{N}}$ denote an $\mL^2(\R^3; \C)$-orthonormal basis for $\mH^1(\R^3;\C)$. For any $K \in \mathbb{N}$, we define $\mathcal{B}_K:= \left\{\psi_j \right\}_{j=1}^K$ and $\mX_K := \text{span } \mathcal{B}_K$. 

Recall that we denote by $N\in \mathbb{N}$ the number of electrons in the system under study. Our goal now is to use the sets $\{\mathcal{B}_K\}_{K\in \mathbb{N}}$ to construct a sequence of finite-dimensional, nested subspaces of the antisymmetric tensor product space $\widehat{\mathcal{H}}^1$ whose union is dense in $\widehat{\mathcal{H}}^1$. To avoid tedious notation in this construction, we will always assume that $K$ is a natural number such that $K \geq N$. Proceeding now, exactly as in Section \ref{sec:3}, we first introduce for each such $K$ the index set $\mathcal{J}_{K}^N \subset \{1, \ldots, K\}^N$ given by
\begin{align*}
	\mathcal{J}_{K}^N := \Big\{\boldsymbol{\ell} = (\ell_1, \ell_2, \ldots, \ell_N)\in \{1, \ldots, K\}^N \colon \ell_1 < \ell_2 < \ldots < \ell_N\Big\}.
\end{align*}

Next, we define for each $K$ the set of $\widehat{\mathcal{L}}^2$-orthonormal, $N$-particle determinants $\mathcal{B}_K^{N} \subset \widehat{\mathcal{H}}^1$ as
\begin{align*}
	\mathcal{B}^{N}_K := \left\{ \Psi_{\bold{k}}(\bold{x}_1, \bold{x}_2, \ldots, \bold{x}_N)=\frac{1}{\sqrt{N!}}\text{\rm det} \big(\psi_{k_i}(\bold{x}_j)\big)_{i, j=1}^N \colon \hspace{1mm} \bold{k}=(k_1, k_2, \ldots, k_N) \in \mathcal{J}_{K}^N \right\},
\end{align*} 
and we denote, as usual, $\Psi_0(\bold{x}_1, \ldots, \bold{x}_N):= \text{det}\big(\psi_{i}(\bold{x}_j)\big)_{i, j=1}^N$.

It now follows that we can define the sequence $\{\mathcal{V}_K\}_{K \geq N}$ of subspaces of $\widehat{\mathcal{H}}^1$ as $\mathcal{V}_K := \text{\rm span } \mathcal{B}^N_K$, and it holds that
\begin{align*}
	\forall~K\geq N\colon \quad \text{\rm dim }\mathcal{V}_K ={{K}\choose{K-N}}, \qquad \forall~ K_2 > K_1 \geq N  \colon \quad \mathcal{V}_{K_1} \subset \mathcal{V}_{K_2} \quad \text{and} \quad \overline{\bigcup_{\substack{K\geq N }} \mathcal{V}_K}^{\Vert \cdot \Vert_{\widehat{\mathcal{H}}^1}}= \widehat{\mathcal{H}}^1.
\end{align*}

Equipped with the sequence of finite-dimensional subspaces $\{\mathcal{V}_K\}_{K \geq N}$ whose union is dense in $\widehat{\mathcal{H}}^1$, our next task is to introduce a corresponding sequence of finite-dimensional coefficient spaces $\{\mathbb{V}_K\}_{K \geq N}$ whose union is dense in the Hilbert space of sequences $\mathbb{V}$ that was introduced through Definition \ref{def:coeff_space}. To this end, we require some definitions.

\begin{definition}[Excitation Index Sets For Finite Bases]\label{def:Excitation_Index_finite}~
	
	For each $K$ and each $j \in \{1, \ldots, N\}$ we define the index set $\mathcal{I}^K$ as
	\begin{align*}
		\mathcal{I}_j^K := \left\{ {{i_1, \ldots, i_j}\choose{\ell_1, \ldots, \ell_j}} \colon i_1 < \ldots< i_j \in \{1, \ldots, N\} \text{ and } \ell_1< \ldots < \ell_j  \in \{N+1, \ldots, K\} \right\},
	\end{align*}
	we set
	\begin{align*}
		\mathcal{I}^K:= \bigcup_{j=1}^N \mathcal{I}^K_j,
	\end{align*}
	and we emphasise that $\underset{K \geq N}{\bigcup} \mathcal{I}^K =\mathcal{I}$, i.e., the global excitation index defined through Definition \ref{def:Excitation_Index}.
\end{definition}

Consider Definition \ref{def:Excitation_Index_finite} of the excitation index sets $\mathcal{I}_j^K, ~j \in \{1, \ldots, N\}$. Since each $\mathcal{I}_j^K$ is a subset of the global excitation index set $\mathcal{I}$ defined through Definition \ref{def:Excitation_Index}, it follows that we can define for any $\mu \in \mathcal{I}_j^K$,  excitation and de-excitation operators $\mathcal{X}_{\mu} \colon \widehat{\mathcal{H}}^1 \rightarrow \widehat{\mathcal{H}}^1$ and $\mathcal{X}^{\dagger}_{\mu}\colon \widehat{\mathcal{H}}^1 \rightarrow \widehat{\mathcal{H}}^1$ through Definitions \ref{def:Excitation_Operator} and \ref{def:De-excitation_Operator} respectively. Moreover, the results of Theorem \ref{pro:excit} can be applied to these excitation and de-excitation operators, and the following remark summarises some additional properties of these elementary excitation and de-excitation operators.

\begin{remark}[Properties of Excitation and De-excitation Operators Related to the Index Set $\mathcal{I}^K$]\label{rem:properties}~
	
	Let the excitation index set $\mathcal{I}^K$ be defined according to Definition \ref{def:Excitation_Index_finite}. Then the finite-dimensional $N$-particle basis $\mathcal{B}_K^N$ and the finite-dimensional $N$-particle approximation space $\mathcal{V}_K$ have the decomposition
	\begin{align*}
		\mathcal{B}_K^N:=& \left\{\Psi_0\right\} \cup \{\mathcal{X}_{\mu} \Psi_0 \colon ~ \mu \in \mathcal{I}^K\},\\[0.5em]
		\mathcal{V}_K:=& \text{\rm span}\left\{\Psi_0\right\} \oplus \underbrace{\text{\rm span}\{\mathcal{X}_{\mu} \Psi_0 \colon ~ \mu \in \mathcal{I}^K\}}_{:= \widetilde{\mathcal{V}}_K}.
	\end{align*}
	Additionally, for any $\mu, \nu \in \mathcal{I}^K$ and $\sigma \in \mathcal{I} \setminus \mathcal{I}^K$ it holds that
	\begin{align*}
		\mathcal{X}_{\mu} \mathcal{X}_{\nu}\Psi_0 \in \widetilde{\mathcal{V}}_K \qquad &\text{and} \qquad \mathcal{X}_{\mu}^{\dagger} \mathcal{X}_{\nu}\Psi_0 \in \widetilde{\mathcal{V}}_K\\[0.5em]
		\mathcal{X}_{\mu} \mathcal{X}_{\sigma}\Psi_0 \notin \mathcal{B}_K^N \qquad &\text{and} \qquad \mathcal{X}_{\mu}^{\dagger} \mathcal{X}_{\sigma}\Psi_0 \notin \mathcal{B}_K^N,\\[0.5em]
		\mathcal{X}_{\sigma} \mathcal{X}_{\nu}\Psi_0 \notin \mathcal{B}_K^N \qquad &\text{and} \qquad \mathcal{X}_{\sigma}^{\dagger} \mathcal{X}_{\nu}\Psi_0 =0.
	\end{align*}
	
	Finally, as in Section \ref{sec:5} we will denote $\Psi_{\mu}:= \mathcal{X}_{\mu} \Psi_0$ for any $\mu \in \mathcal{I}^K$.
\end{remark}

Next we will introduce subspaces of coefficient vectors corresponding to the excitation index sets $\left\{\mathcal{I}^K\right\}_{K\geq N}$. The following construction is essentially an adaptation of Definition \ref{def:coeff_space} of the sequence space $\mathbb{V}$ to finite dimensions.

\begin{definition}[Finite-Dimensional Coefficient Spaces]\label{def:coeff_space_finite}~
	
	Let the excitation index set $\mathcal{I}_K$ be defined through Definition \ref{def:Excitation_Index_finite} for $K\geq N$, and let the Hilbert space of sequences~$\mathbb{V}$ be defined according to Definition \ref{def:coeff_space}. We define the Hilbert subspace of coefficients $\mathbb{V}_{K} \subset \mathbb{V}$ as the set
	\begin{equation}\label{eq:coeff_space_finite}
		\mathbb{V}_K:= \left\{\bold{t}:= (\bt_{\mu})_{\mu \in \mathcal{I}} \in \mathbb{V}\colon\quad \bt_{\mu}=0 ~\forall \mu \notin \mathcal{I}^K \right\},
	\end{equation}
	equipped with the $(\cdot, \cdot)_{\mathbb{V}}$ inner product.
\end{definition}

\begin{Notation}
	Consider Definition \ref{def:coeff_space_finite} of the Hilbert subspace of coefficients $\mathbb{V}_K$, and let $\bt \in \mathbb{V}_K$ denote an arbitrary element. In the sequel, for clarity of exposition we will frequently denote $\bt:= \bt_K := \{\bt_{\mu}\}_{\mu \in \mathcal{I}^K}$. In other words, by an abuse of notation, we will identify $\mathbb{V}_K$ with the set $\ell^2\left(\mathcal{I}^K\right)$ but equipped with the $(\cdot, \cdot)_{\mathbb{V}}$ inner product.
\end{Notation}

As can be expected, the coefficient subspaces $\{\mathbb{V}_K\}_{K\geq N}$ introduced through Definition \ref{def:coeff_space_finite} inherit many properties from the $N$-particle approximation spaces $\{\mathcal{B}_K^N\}_{K\geq N}$. Indeed, we have the following lemma.

\begin{lemma}[Density of Finite-Dimensional Coefficient Spaces]\label{lem:coeff_space_finite}~
	
	Let the infinite-dimensional Hilbert space of sequences $\mathbb{V} \subset \ell^2(\mathcal{I})$ be defined through Definition \ref{def:coeff_space} and let the Hilbert subspace of coefficients $\mathbb{V}_{{K}} \subset \mathbb{V}$ be defined through Definition \ref{def:coeff_space_finite} for $K\geq N$. Then it holds that
	\begin{align*}
		\forall~ K_2 > K_1 \geq N  \colon \quad \mathbb{V}_{K_1} \subset \mathbb{V}_{K_2} \qquad \text{and} \qquad  \overline{\bigcup_{\substack{K\geq N }} \mathbb{V}_K}^{\Vert \cdot \Vert_{\mathbb{V}}}= \mathbb{V}.
	\end{align*}
\end{lemma}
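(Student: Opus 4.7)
The nesting claim $\mathbb{V}_{K_1} \subset \mathbb{V}_{K_2}$ for $K_2 > K_1 \geq N$ is immediate from the definitions: since $\mathcal{I}^{K_1} \subset \mathcal{I}^{K_2}$, any sequence supported on $\mathcal{I}^{K_1}$ is also supported on $\mathcal{I}^{K_2}$, so no analytical work is required here.

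For the density assertion, the plan is to transfer the problem from the sequence space $\mathbb{V}$ to the function space $\widetilde{\mathcal{V}} \subset \widehat{\mathcal{H}}^1$ using the isometric identification provided by Remark \ref{rem:comp}. Concretely, given $\bt = \{\bt_\mu\}_{\mu \in \mathcal{I}} \in \mathbb{V}$, I would consider the associated function $\Psi_\bt := \sum_{\mu \in \mathcal{I}} \bt_\mu \Psi_\mu \in \widehat{\mathcal{H}}^1$, which lies in $\widetilde{\mathcal{V}} = \{\Psi_0\}^\perp$ by construction, and observe that by definition of the $\mathbb{V}$ inner product we have the identity $\Vert \bt - \bs \Vert_{\mathbb{V}} = \Vert \Psi_\bt - \Psi_\bs \Vert_{\widehat{\mathcal{H}}^1}$ for any $\bs \in \mathbb{V}$. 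Convergence in $\mathbb{V}$ is therefore equivalent to $\widehat{\mathcal{H}}^1$-convergence of the associated functions.

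Next, I would exploit the already-established density $\overline{\bigcup_{K \geq N} \mathcal{V}_K}^{\Vert \cdot \Vert_{\widehat{\mathcal{H}}^1}} = \widehat{\mathcal{H}}^1$ to pick, for any $\varepsilon > 0$, some $K \geq N$ and $\Phi_K \in \mathcal{V}_K$ with $\Vert \Psi_\bt - \Phi_K \Vert_{\widehat{\mathcal{H}}^1} < \varepsilon$. The resulting $\Phi_K$ need not belong to $\widetilde{\mathcal{V}}_K$, so I would project it with the operator $\mathbb{P}_0^\perp$ introduced in Definition \ref{lem:comp}. Since $\Psi_0 \in \widehat{\mathcal{H}}^1$ and $\mathbb{P}_0 \Phi = (\Psi_0, \Phi)_{\widehat{\mathcal{L}}^2} \Psi_0$ defines a bounded rank-one operator on $\widehat{\mathcal{H}}^1$, its complement $\mathbb{P}_0^\perp$ is bounded on $\widehat{\mathcal{H}}^1$, and it preserves the decomposition $\mathcal{V}_K = \mathrm{span}\{\Psi_0\} \oplus \widetilde{\mathcal{V}}_K$ (c.f.\ Remark \ref{rem:properties}). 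Using that $\mathbb{P}_0^\perp \Psi_\bt = \Psi_\bt$ since $\Psi_\bt \in \widetilde{\mathcal{V}}$, the continuity of $\mathbb{P}_0^\perp$ yields
\begin{equation*}
\Vert \Psi_\bt - \mathbb{P}_0^\perp \Phi_K \Vert_{\widehat{\mathcal{H}}^1} = \Vert \mathbb{P}_0^\perp(\Psi_\bt - \Phi_K) \Vert_{\widehat{\mathcal{H}}^1} \leq \Vert \mathbb{P}_0^\perp \Vert_{\widehat{\mathcal{H}}^1 \to \widehat{\mathcal{H}}^1} \, \varepsilon.
\end{equation*}

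Finally, since $\widetilde{\Phi}_K := \mathbb{P}_0^\perp \Phi_K \in \widetilde{\mathcal{V}}_K$, applying Remark \ref{rem:comp} in the finite-dimensional setting (so that the expansion in the basis $\{\mathcal{X}_\mu \Psi_0\}_{\mu \in \mathcal{I}^K}$ is just a finite linear combination) produces a unique $\bt^{(K)} = \{\bt^{(K)}_\mu\}_{\mu \in \mathcal{I}^K} \in \mathbb{V}_K$ with $\Psi_{\bt^{(K)}} = \widetilde{\Phi}_K$. Then the isometric identification above gives $\Vert \bt - \bt^{(K)} \Vert_{\mathbb{V}} = \Vert \Psi_\bt - \widetilde{\Phi}_K \Vert_{\widehat{\mathcal{H}}^1} \lesssim \varepsilon$, which proves the density. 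The only nontrivial step is verifying the $\widehat{\mathcal{H}}^1$-boundedness of $\mathbb{P}_0^\perp$, but this follows routinely from $\Psi_0 \in \widehat{\mathcal{H}}^1$ via Cauchy--Schwarz in $\widehat{\mathcal{L}}^2$ combined with the continuous embedding $\widehat{\mathcal{H}}^1 \hookrightarrow \widehat{\mathcal{L}}^2$.
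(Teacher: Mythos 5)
Your proof is correct and follows essentially the same route as the paper: identify a coefficient sequence with its associated function in $\{\Psi_0\}^{\perp}$ via the isometry defining the $\mathbb{V}$-norm, invoke the density of $\bigcup_{K\geq N}\mathcal{V}_K$ in $\widehat{\mathcal{H}}^1$, and pull the approximants back to the spaces $\mathbb{V}_K$. The only difference is that you make explicit, via the $\widehat{\mathcal{H}}^1$-bounded projector $\mathbb{P}_0^{\perp}$, the intermediate step that the paper merely asserts (namely that $\bigcup_{K\geq N}\widetilde{\mathcal{V}}_K$ is dense in $\{\Psi_0\}^{\perp}$), which is a welcome elaboration rather than a different argument.
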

\begin{proof}
	The set inclusion is obvious so we focus on proving the density. Note that the density result would also be obvious had the sequence space $\mathbb{V}$ been equipped with the $\Vert \cdot \Vert_{\ell^2}$ norm. The density is slightly subtle precisely because we have equipped $\mathbb{V}$ with the non-standard $\Vert \cdot \Vert_{\mathbb{V}}$ norm.
 

	Recall that the union of the $N$-particle approximation spaces $\left\{\mathcal{V}_K\right\}_{K\geq N}$ is dense in $\widehat{\mathcal{H}}^1$. From this we deduce that the union of the $N$-particle approximation \emph{subspaces} $\{\widetilde{\mathcal{V}}_K\}_{K\geq N}$ is dense in $\text{span}\{\Psi_0\}^{\perp}$, where we remind the reader that $\widetilde{\mathcal{V}}_K= \text{\rm span} \{\Psi_{\mu}\colon ~ \mu \in \mathcal{I}^K\} = \{\Psi \in \mathcal{V}_K \colon (\Psi, \Psi_0)_{\widehat{\mathcal{L}}^2}=0\}$.
	
	Consequently, there exists a sequence of functions $\{\Psi_K\}_{K\geq N}$ with each $\Psi_K:= \sum_{\mu\in \mathcal{I}^K} \bt^K_\mu \Psi_{\mu} \in \widetilde{\mathcal{V}}_K$ such that $\lim_{K \to \infty}\Vert \Psi_K -\Psi_{\bs}^* \Vert_{\widehat{\mathcal{H}}^1}=0$. Defining for each $K \geq N$, the sequence $\bt_K \in \mathbb{V}_K $ as $\bt_K = \{\bt^K_{\mu}\}_{\mu \in \mathcal{I}^K}$, and using the definition of the $\Vert\cdot\Vert_{\mathbb{V}}$ norm now yields the required density.
\end{proof}

We are now ready to state the discrete coupled cluster equations corresponding to the approximation spaces we have introduced above. As mentioned at the beginning of this section, these equations are known in the quantum chemical literature as the Full-Coupled Cluster equations in a finite basis.

\vspace{1mm}
\textbf{Full-Coupled Cluster Equations in a Finite Basis:}~    

Let the excitation index set $\mathcal{I}^K$ be defined through Definition \ref{def:Excitation_Index_finite} for $K\geq N$, let the Hilbert subspace of coefficients $\mathbb{V}_K \subset \mathbb{V}$ be defined through Definition \ref{def:coeff_space_finite}, and let the coupled cluster function $\mathcal{f}\colon \mathbb{V} \rightarrow \mathbb{V}^*$ be defined through Definition \ref{def:CC_function}. We seek a coefficient vector $\bt_K \in \mathbb{V}_K$ such that for all coefficient vectors $\bs_K \in \mathbb{V}_K$ it holds that
\begin{equation}\label{eq:Galerkin_truncated_CC_1}
	\left\langle \bs_K, \mathcal{f}(\bt_K)\right\rangle_{\mathbb{V} \times \mathbb{V}^*}=0.
\end{equation}        

\vspace{3mm}
The remainder of this section will be concerned with the (local) well-posedness analysis of Equation \eqref{eq:Galerkin_truncated_CC_1}. We begin with a definition.

\begin{definition}[Restricted Coupled Cluster function on Full-CI spaces]\label{def:CC_function_FCI}~
	
	Let the excitation index set $\mathcal{I}^K$ be defined through Definition \ref{def:Excitation_Index_finite} for $K\geq N$ and let the Hilbert subspace of coefficients $\mathbb{V}_K \subset \mathbb{V}$ be defined through Definition \ref{def:coeff_space_finite}. We define the restricted coupled cluster function $\mathcal{f}_K \colon \mathbb{V}_K\rightarrow \mathbb{V}^*_K$ as the mapping with the property that for all $\bt_K, \bs_K \in \mathbb{V}_K$ it holds that
	\begin{align*}
		\langle \bs_K, \mathcal{f}_{K}(\bt_K)\rangle_{\mathbb{V}_K \times \mathbb{V}_K^*} := \left \langle \bs_K, \mathcal{f}(\bt_K)\right \rangle_{\mathbb{V}\times \mathbb{V}^*}.
	\end{align*}
\end{definition}

It is readily seen that solutions $\bt_K^* \in \mathbb{V}_K$ to the Full-CC equations in a finite basis \eqref{eq:Galerkin_truncated_CC_1} are nothing else than zeros of the restricted coupled cluster function $\mathcal{f}_K \colon \mathbb{V}_K \rightarrow \mathbb{V}_K^*$ defined through Definition \ref{def:CC_function_FCI}. The following result, whose proof can, for instance, be found in \cite{Schneider_1}, is essentially a finite-dimensional analogue of Theorem \ref{thm:Yvon} and establishes a relationship between these zeros of the restricted coupled cluster function and intermediately normalised eigenfunctions of the Full-CI Hamiltonian $H_{K} \colon \mathcal{V}_K \rightarrow \mathcal{V}_K^*$ defined through Equation \eqref{eq:FCI_Hamiltonian}.


\begin{theorem}[Relation of Restricted Coupled Cluster Zeros and Full-CI Hamiltonian Eigenfunctions]\label{thm:Yvon_2}~
	
	Let the restricted coupled cluster function $\mathcal{f}_K \colon \mathbb{V}_K \rightarrow \mathbb{V}_K^*$ be defined through Definition \ref{def:CC_function_FCI}, and let the Full-CI Hamiltonian $H_{K} \colon \mathcal{V}_K \rightarrow \mathcal{V}_K^*$ be defined through Equation \eqref{eq:FCI_Hamiltonian}. Then 
	\begin{enumerate}
		\item For any zero $\bt_K^* = \{\bt_{\mu}^*\}_{\mu \in \mathcal{I}^K}\in \mathbb{V}_K$ of the restricted CC function, the function $\Psi_K^*=e^{\mathcal{T}_K^*}\Psi_0 \in \mathcal{V}_K$ with $\mathcal{T}_K^*=\sum_{\mu \in \mathcal{I}^K} \bt^*_\mu \mathcal{X}_\mu$ is an intermediately normalised eigenfunction of the Full-CI Hamiltonian. Moreover, the eigenvalue corresponding to the eigenfunction $\Psi_K^*$ coincides with the discrete CC energy $\mathcal{E}_{K, \rm CC}^*$ generated by $\bt_K^*$ as defined through Equation \eqref{eq:CC_energy_proj}. 
		
		\item Conversely, for any intermediately normalised eigenfunction $\Psi_K^* \in \mathcal{V}_K$ of the Full-CI Hamiltonian, there exists $\bt_K^* = \{\bt_{\mu}^*\}_{\mu \in \mathcal{I}^K}\in \mathbb{V}_K$ such that $\bt_K^*$ is a zero of the restricted CC function and $\Psi_K^*=e^{\mathcal{T}_K^*}\Psi_0 \in \mathcal{V}_K$ with $\mathcal{T}_K^*=\sum_{\mu \in \mathcal{I}^K} \bt^*_\mu \mathcal{X}_\mu$. Moreover, the discrete CC energy $\mathcal{E}_{K, \rm CC}^*$ generated by $\bt_K^*$ through through Equation~\eqref{eq:CC_energy_proj} coincides with the eigenvalue corresponding to the eigenfunction $\Psi_K^*$.
		
	\end{enumerate}
\end{theorem}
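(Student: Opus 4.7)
The plan is to mirror the structure of the continuous Theorem \ref{thm:Yvon}, replacing $\widehat{\mathcal{H}}^1$ by the Full-CI space $\mathcal{V}_K$, and to import from \cite{Schneider_1} the finite-dimensional exponential parametrization: every intermediately normalized $\Psi_K \in \mathcal{V}_K$ admits a unique representation $\Psi_K = e^{\mathcal{T}_K}\Psi_0$ with $\mathcal{T}_K = \sum_{\mu \in \mathcal{I}^K}\bt_\mu\mathcal{X}_\mu$ and $\bt_K = \{\bt_\mu\}_{\mu \in \mathcal{I}^K} \in \mathbb{V}_K$. The organising observation is that any cluster operator $\mathcal{T}_K$ supported on $\mathcal{I}^K$, as well as its $\widehat{\mathcal{L}}^2$-adjoint $\mathcal{T}_K^\dagger$, stabilises $\mathcal{V}_K$; consequently $e^{\pm\mathcal{T}_K}$ and $e^{\pm\mathcal{T}_K^\dagger}$ are bijections on $\mathcal{V}_K$. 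This reduces to the stability relations $\mathcal{X}_\mu\mathcal{X}_\nu\Psi_0,\; \mathcal{X}_\mu^\dagger\mathcal{X}_\nu\Psi_0 \in \widetilde{\mathcal{V}}_K$ for all $\mu,\nu \in \mathcal{I}^K$ recorded in Remark \ref{rem:properties}, together with the algebraic and nilpotency properties of Theorem \ref{pro:excit}.

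For the first implication, fix a zero $\bt_K^* \in \mathbb{V}_K$ of $\mathcal{f}_K$ and set $\Psi_K^* := e^{\mathcal{T}_K^*}\Psi_0 \in \mathcal{V}_K$. Intermediate normalisation is immediate since $\mathcal{T}_K^*\Psi_0 \in \widetilde{\mathcal{V}}_K \perp \Psi_0$. To establish the eigenvalue identity $H_K\Psi_K^* = \mathcal{E}_{K,\rm CC}^*\Psi_K^*$ in $\mathcal{V}_K^*$, rewrite any test function $\Phi_K \in \mathcal{V}_K$ as $\Phi_K = e^{-(\mathcal{T}_K^*)^\dagger}\widetilde\Phi_K$ with the unique preimage $\widetilde\Phi_K := e^{(\mathcal{T}_K^*)^\dagger}\Phi_K \in \mathcal{V}_K$, and pass the adjoint across the inner product to obtain
\begin{equation*}
\langle \Phi_K, (H - \mathcal{E}_{K,\rm CC}^*)\Psi_K^* \rangle_{\widehat{\mathcal{H}}^1 \times \widehat{\mathcal{H}}^{-1}} = \langle \widetilde\Phi_K, e^{-\mathcal{T}_K^*}(H - \mathcal{E}_{K,\rm CC}^*)e^{\mathcal{T}_K^*}\Psi_0 \rangle_{\widehat{\mathcal{H}}^1 \times \widehat{\mathcal{H}}^{-1}}.
\end{equation*}
Decomposing $\widetilde\Phi_K$ along the orthonormal basis $\{\Psi_0\}\cup\{\mathcal{X}_\mu\Psi_0:\mu\in\mathcal{I}^K\}$ of $\mathcal{V}_K$ splits the right-hand side into a $\Psi_0$-component that vanishes by the definition \eqref{eq:CC_energy_proj} of $\mathcal{E}_{K,\rm CC}^*$ and into $\mathcal{X}_\mu\Psi_0$-components that vanish by the restricted CC equations \eqref{eq:Galerkin_truncated_CC_1} together with $(\mathcal{X}_\mu\Psi_0,\Psi_0)_{\widehat{\mathcal{L}}^2}=0$. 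Coincidence of the eigenvalue with $\mathcal{E}_{K,\rm CC}^*$ is then built into the construction.

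For the converse, given an intermediately normalised eigenpair $(\mathcal{E}_K^*, \Psi_K^*)$ of $H_K$ in $\mathcal{V}_K$, invoke the finite-dimensional exponential parametrization of \cite{Schneider_1} to produce the unique $\bt_K^* \in \mathbb{V}_K$ with $\Psi_K^* = e^{\mathcal{T}_K^*}\Psi_0$. To verify that $\bt_K^*$ is a zero of $\mathcal{f}_K$, reverse the calculation above: for each $\mu \in \mathcal{I}^K$ the test function $\Phi_K := e^{-(\mathcal{T}_K^*)^\dagger}\mathcal{X}_\mu\Psi_0 \in \mathcal{V}_K$ turns the eigenvalue equation $\langle\Phi_K, H_K\Psi_K^* - \mathcal{E}_K^*\Psi_K^*\rangle = 0$ into the $\mu$-th restricted CC equation, and the choice $\Phi_K := e^{-(\mathcal{T}_K^*)^\dagger}\Psi_0$ identifies $\mathcal{E}_K^* = \mathcal{E}_{K,\rm CC}^*$.

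The technical heart of the argument, and the step requiring the greatest care, is the stabilisation claim that $e^{\pm(\mathcal{T}_K^*)^\dagger}$ are well-defined automorphisms of $\mathcal{V}_K$. This hinges on the finer bookkeeping of Remark \ref{rem:properties}: a de-excitation indexed by $\mathcal{I}^K$ can only replace virtual orbitals in $\{N+1,\ldots,K\}$ with occupied orbitals in $\{1,\ldots,N\}$, so compositions of such de-excitations with Slater determinants built from $\{\psi_1,\ldots,\psi_K\}$ remain in $\mathcal{B}_K^N$. Once this stabilisation is in hand, the remainder of the proof is an essentially algebraic adaptation of Theorem \ref{thm:Yvon} to finite dimensions.
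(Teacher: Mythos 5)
Your proposal is correct and follows exactly the route the paper intends: the paper gives no proof of Theorem \ref{thm:Yvon_2} itself but defers to \cite{Schneider_1}, describing the result as the finite-dimensional analogue of Theorem \ref{thm:Yvon}, and your argument is precisely that analogue — the finite-dimensional exponential parametrisation plus the observation that excitation and de-excitation operators indexed by $\mathcal{I}^K$ (hence $e^{\pm\mathcal{T}_K^*}$ and $e^{\pm(\mathcal{T}_K^*)^\dagger}$) stabilise $\mathcal{V}_K$, which converts the weak eigenvalue problem for $H_K$ into the restricted CC equations and back. Your identification of the stabilisation of $\mathcal{V}_K$ as the key point, justified by the orbital bookkeeping, is exactly what makes the continuous argument transfer.
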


In view of Theorem \ref{thm:Yvon_2}, the goal of our analysis in this section will be two-fold: first, we would like to demonstrate, exactly as in the infinite-dimensional case, that solutions $\bt_K^* \in \mathbb{V}_K$ of the Full-CC equations \eqref{eq:Galerkin_truncated_CC_1} that correspond to non-degenerate eigenpairs of the Full-CI Hamiltonian are locally unique. Second, we wish to obtain a characterisation of the error between solutions $\bt_K^* \in \mathbb{V}_K$ of the Full-CC equations \eqref{eq:Galerkin_truncated_CC_1} and solutions $\bt^* \in \mathbb{V}$ of the continuous coupled cluster equations \eqref{eq:Galerkin_Full_CC}. For the latter analysis we will appeal to classical results from the numerical analysis of Galerkin discretisations of non-linear equations but the former task is essentially trivial since the Full-CC equations have the same structure as the continuous CC equations and hence our proofs from Section \ref{sec:5} can be copied with minor amendments. For the sake of brevity therefore, we simply state the final result on local uniqueness of solutions to the Full-CC equations in a finite basis \eqref{eq:Galerkin_truncated_CC_1}.

\begin{theorem}[Local Well-Posedness of the Full-Coupled Cluster Equations in a Finite Basis]\label{thm:CC_truncated_full}~
	
	Let $\mathbb{V}_K \subset \mathbb{V}$ denote the Hilbert subspace of coefficients as defined through Definition \ref{def:coeff_space_finite} for $K \geq N$, let the restricted coupled cluster function $\mathcal{f}_K \colon \mathbb{V}_K \rightarrow \mathbb{V}_K^*$ be defined through Definition \ref{def:CC_function_FCI}, let $\bt_K^*:= \{\bt^*_{\mu}\}_{\mu \in \mathcal{I}^K} \in \mathbb{V}_K$ denote a zero of the restricted coupled cluster function corresponding to any intermediately normalised eigenfunction $\Psi_K^* \in \mathcal{V}_K$ of the Full-CI Hamiltonian $H_{K} \colon \mathcal{V}_K \rightarrow \mathcal{V}_K^*$ with non-degenerate eigenvalue $\mathcal{E}_K^*$, let $\gamma_K >0$ denote the inf-sup constant of the shifted Full-CI Hamiltonian $H_K-\mathcal{E}_K^*$ on $\{\Psi_K^*\}^{\perp} \subset \mathcal{V}_K$, let $\Theta_K > 0$ be defined as~$\Theta_K :=  \Vert e^{(\mathcal{T}^*_K)^{\dagger}}\Vert_{\mathcal{V}_K\to \mathcal{V}_K} \Vert \mathbb{P}_0^{\perp} e^{-\mathcal{T}_K^*}\Vert_{\mathcal{V}_K \to \mathcal{V}_K}$ with $\mathcal{T}_K^*:= \sum_{\mu \in \mathcal{I}^K}\bt^*_{\mu} \mathcal{X}_{\mu} $, let the continuity constant $\alpha_{\bt_K^*} >0$ and the Lipschitz continuity function $\mL_{\bt_K^*} \colon \mathbb{R}_+ \rightarrow \mathbb{R}_+$ be defined according to Notation~\ref{not:1}, and define the constant
	\begin{align*}
		\mR:= \mR(K):= \min_{\delta >0} \left\{\delta, ~\frac{\gamma_K}{\mL_{\bt_K^*}(\delta) \Theta_K},~ 2\frac{\alpha_{\bt_K^*}}{\mL_{\bt_K^*}(\delta)} \right\}.
	\end{align*}
	
	Then $\mathcal{f}_K\big(\mB_{\mR}(\bt^*_K)\big)$ is an open subset of $\mathbb{V}_K^*$, the restriction of $\mathcal{f}_K$ to $\mB_{\mR}(\bt_K^*)$ is a \emph{diffeomorphism}, and for all $ \bs_K \in \mB_{\mR}(\bt_K^*)$ we have the error estimate
	\begin{equation}\label{eq:CC_full_error}
		\frac{1}{2} \frac{1}{\alpha_{\bt_K^*} } \Vert \mathcal{f}_K(\bs_K) \Vert_{\mathbb{V}_K^*} \leq \Vert \bt_K^* - \bs_K \Vert_{\mathbb{V}_K} \leq 2 \frac{\Theta_K}{\gamma_K} \Vert \mathcal{f}_K(\bs_K) \Vert_{\mathbb{V}_K^*}.
	\end{equation}
	
	In particular, $\bt_K^*$ is the unique solution of the Full-Coupled Cluster equations in a finite basis \eqref{eq:CC} in the open ball $\mB_{\mR_K}(\bt_K^*)$.
\end{theorem}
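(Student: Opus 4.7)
The plan is to transcribe, almost line-for-line, the analysis of Section~\ref{sec:5} into the finite-dimensional setting, replacing the electronic Hamiltonian $H$ by the Full-CI Hamiltonian $H_K$, the ambient space $\widehat{\mathcal{H}}^1$ by $\mathcal{V}_K$, and the sequence space $\mathbb{V}$ by $\mathbb{V}_K$. The enabling fact is Remark~\ref{rem:properties}: the action of any excitation or de-excitation operator indexed by $\mathcal{I}^K$ maps $\mathcal{V}_K$ into itself, so every cluster operator $\mathcal{T}_K = \sum_{\mu \in \mathcal{I}^K}(\bt_K)_\mu \mathcal{X}_\mu$ restricts to a bounded linear map on $\mathcal{V}_K$, and the algebraic structure of $\mathfrak{L}$ (commutativity, nilpotency, exponential diffeomorphism) descends to the finite-dimensional setting verbatim. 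The Gateaux computation of Proposition~\ref{prop:CC_der} then yields, at any $\bt_K \in \mathbb{V}_K$,
\begin{equation*}
\left\langle \bw_K, \mD\mathcal{f}_K(\bt_K)\bs_K\right\rangle_{\mathbb{V}_K \times \mathbb{V}_K^*}
= \left\langle \sum_{\mu \in \mathcal{I}^K} (\bw_K)_\mu \mathcal{X}_\mu \Psi_0,\; e^{-\mathcal{T}_K}\comm{H_K}{\sum_{\nu \in \mathcal{I}^K} (\bs_K)_\nu \mathcal{X}_\nu} e^{\mathcal{T}_K}\Psi_0\right\rangle_{\mathcal{V}_K \times \mathcal{V}_K^*},
\end{equation*}
and Corollary~\ref{cor:CC_der_Lipschitz} adapts identically to give local Lipschitz continuity $\mL_{\bt_K^*}(\delta)$ of $\bt_K \mapsto \mD\mathcal{f}_K(\bt_K)$. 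At the zero $\bt_K^*$, using $H_K e^{\mathcal{T}_K^*}\Psi_0 = \mathcal{E}_K^* e^{\mathcal{T}_K^*}\Psi_0$ and commutativity of $\mathcal{T}_K^*$ with any $\mathcal{S}_K$ collapses the commutator as in Corollary~\ref{cor:CC_der}, identifying $\mD\mathcal{f}_K(\bt_K^*)$ with the operator $\mathcal{A}_K(\bt_K^*)\colon \widetilde{\mathcal{V}}_K \to \widetilde{\mathcal{V}}_K^*$ defined by $\mathcal{A}_K(\bt_K^*)\widetilde{\Psi} := e^{-\mathcal{T}_K^*}(H_K - \mathcal{E}_K^*)e^{\mathcal{T}_K^*}\widetilde{\Psi}$.

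The main step is the finite-dimensional analogue of Theorem~\ref{thm:CC}, namely that $\mathcal{A}_K(\bt_K^*)$ is an isomorphism with $\Vert \mathcal{A}_K(\bt_K^*)^{-1}\Vert_{\widetilde{\mathcal{V}}_K^* \to \widetilde{\mathcal{V}}_K} \leq \Theta_K/\gamma_K$. I would follow the same two-step argument. For injectivity, suppose $\mathcal{A}_K(\bt_K^*)\widetilde{\Psi}$ vanishes on $\widetilde{\mathcal{V}}_K$; testing against $e^{(\mathcal{T}_K^*)^{\dagger}}e^{\mathcal{T}_K^*}\Psi_0 \in \mathcal{V}_K$ (this inclusion is precisely Remark~\ref{rem:properties}) and using that $\Psi_K^* = e^{\mathcal{T}_K^*}\Psi_0$ is an eigenfunction forces $\mathcal{A}_K(\bt_K^*)\widetilde{\Psi} \equiv 0$ in all of $\mathcal{V}_K^*$, whereupon the simplicity of $\mathcal{E}_K^*$ combined with bijectivity of $e^{\mathcal{T}_K^*}$ on $\mathcal{V}_K$ yields $\widetilde{\Psi} = 0$. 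For the lower bound on $\mathcal{A}_K(\bt_K^*)^{\dagger}$, choose for each $\Psi_\perp^* \in \{\Psi_K^*\}^{\perp} \subset \mathcal{V}_K$ the test function $\widetilde{\Phi}_{\Psi_\perp} := \mathbb{P}_0^{\perp} e^{-\mathcal{T}_K^*}\Psi_\perp^* \in \widetilde{\mathcal{V}}_K$; the $\mathbb{P}_0$-projection contribution vanishes by the eigenpair identity exactly as in the proof of Theorem~\ref{thm:CC}, and the residual inner product is bounded below by $\gamma_K \Vert e^{-(\mathcal{T}_K^*)^{\dagger}}\widetilde{\Psi}\Vert_{\mathcal{V}_K}$, where $\gamma_K > 0$ is the assumed inf-sup constant of $H_K - \mathcal{E}_K^*$ on $\{\Psi_K^*\}^{\perp}$ (whose strict positivity is itself a consequence of the non-degeneracy of $\mathcal{E}_K^*$, as in Remark~\ref{rem:interp_const} applied to $H_K$). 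Dividing by $\Vert \widetilde{\Phi}_{\Psi_\perp}\Vert_{\mathcal{V}_K} \leq \Vert \mathbb{P}_0^{\perp} e^{-\mathcal{T}_K^*}\Vert \Vert \Psi_\perp^*\Vert$ and taking the supremum yields the claimed bound $\Vert \mathcal{A}_K(\bt_K^*)^{\dagger} \widetilde{\Psi}\Vert_{\widetilde{\mathcal{V}}_K^*} \geq (\gamma_K/\Theta_K)\Vert \widetilde{\Psi}\Vert_{\mathcal{V}_K}$, and hence invertibility of $\mathcal{A}_K(\bt_K^*)$.

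With the invertibility of $\mD\mathcal{f}_K(\bt_K^*)$ and the local Lipschitz bound in hand, the conclusion is obtained exactly as in Theorem~\ref{thm:CC_untruncated}: the Banach-space inverse function theorem \cite[Theorem 4.2]{howard1997inverse} gives openness of $\mathcal{f}_K(\mB_{\mR}(\bt_K^*))$ and the diffeomorphism property, while \cite[Proposition 2.1]{MR1213837} delivers the two-sided residual estimate \eqref{eq:CC_full_error}. The radius $\mR$ is chosen as the largest $\delta$ that simultaneously keeps $\mD\mathcal{f}_K$ invertible on $\mB_{\delta}(\bt_K^*)$ (using the Lipschitz bound and the inverse bound $\Theta_K/\gamma_K$) and keeps the residual comparable to the increment, producing the three-way minimum in the statement. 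The work here is almost purely a transcription exercise; the only genuine bookkeeping obstacle is verifying at each step that every auxiliary vector appearing in the argument --- the test functions $\mathbb{P}_0^{\perp} e^{-\mathcal{T}_K^*}\Psi_\perp^*$ and $e^{(\mathcal{T}_K^*)^{\dagger}}e^{\mathcal{T}_K^*}\Psi_0$, as well as the inverse-exponentials $e^{-(\mathcal{T}_K^*)^{\dagger}}\widetilde{\Psi}$ --- remains inside $\mathcal{V}_K$, which is exactly what Remark~\ref{rem:properties} and the nilpotency of $\mathcal{T}_K^*$ on $\mathcal{V}_K$ guarantee.
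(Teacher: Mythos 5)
Your proposal is correct and follows essentially the same route as the paper: the paper's proof is precisely the transcription of Proposition \ref{prop:CC_der}, Corollary \ref{cor:CC_der}, Theorem \ref{thm:CC} and Theorem \ref{thm:CC_untruncated} to the finite-dimensional setting, identifying $\mD\mathcal{f}_K(\bt_K^*)$ with the operator $\mathcal{A}_K(\bt_K^*)=e^{-\mathcal{T}_K^*}(H_K-\mathcal{E}_K^*)e^{\mathcal{T}_K^*}$ on $\widetilde{\mathcal{V}}_K$, proving it is an isomorphism by the same injectivity/bounded-below-adjoint argument, and then invoking the inverse function theorem and \cite[Proposition 2.1]{MR1213837}. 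Your explicit attention to the invariance of $\mathcal{V}_K$ under the cluster operators indexed by $\mathcal{I}^K$ (Remark \ref{rem:properties}) is exactly the point that makes the transcription legitimate.
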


\begin{proof}
	The proof is essentially identical to the proof of Theorem \ref{thm:CC_untruncated} with some obvious modifications. We first obtain an expression for the Full-CC Jacobian $\mD \mathcal{f}_K(\bt_K) \colon \mathbb{V}_K  \rightarrow \mathbb{V}_K^*$ at any $\bt_K \in \mathbb{V}_K$ exactly as in the infinite-dimensional case. Thanks to Theorem \ref{thm:Yvon_2}, we can deduce from this expression that the Jacobian $\mD \mathcal{f}_K(\bt_K^*)$ at any zero $\bt_K^* \in \mathbb{V}_K$ of the restricted CC function has the form
	\begin{align*}
		\left \langle  \bw_K, \mD \mathcal{f}_K(\bt_K^*) \bs_K \right\rangle_{\mathbb{V}_K \times \mathbb{V}_K^*} = \left \langle  \sum_{\mu \in \mathcal{I}^K} \bw_{\mu} \mathcal{X}_\mu \Psi_0, e^{-\mathcal{T}_K^*} \left(H - \mathcal{E}_K^*\right) e^{\mathcal{T}_K^*} \sum_{\nu \in \mathcal{I}^K} \bs_{\nu} \mathcal{X}_\nu\Psi_0\right \rangle_{\widehat{\mathcal{H}}^1 \times \widehat{\mathcal{H}}^{-1}},
	\end{align*}
	for all $\bs_K, \bw_K \in \mathbb{V}_K $ with $\bs_K =\{\bs_{\nu}\}_{\nu \in \mathcal{I}^K}$ and $ \bw_K=\{\bw_{\mu}\}_{\mu \in \mathcal{I}^K}$ where $\mathcal{T}_K^*:= \sum_{\mu \in \mathcal{I}^K}\bt^*_{\mu} \mathcal{X}_{\mu} $. 
	
	In analogy with Definition \ref{def:A_op}, we can then introduce an operator $\mathcal{A}_K(\bt_K^*) \colon \widetilde{\mathcal{V}}_K \rightarrow \widehat{\mathcal{H}}^{-1}$ that characterises the action of the Full-CC derivative $\mD \mathcal{f}_K(\bt_K^*)$ and show that this operator is an isomorphism from $\widetilde{\mathcal{V}}_K$ to $\widetilde{\mathcal{V}}_K^*$ exactly as in Theorem \ref{thm:CC}. The local-uniqueness result then readily follows.
\end{proof}

Consider the setting of Theorem \ref{thm:CC_truncated_full}. For very small molecules discretised in minimal basis sets, it is possible to perform Full-CI calculations and thereby gain access to the derivative $\mD\mathcal{f}_K(\bt)$ of the restricted coupled cluster function at $\bt = \bt^*_{\rm FCI} \in \mathbb{V}_K$, i.e., at the coefficient vector $\bt^*_{\rm FCI}$ which generates the Full-CI ground state wave-function. It is natural to ask how the bounds that we have derived compare to the exact norm of the inverse $\mD\mathcal{f}^{-1}_K(t^*_{\rm FCI})$. While a comprehensive numerical study is left to a future contribution, some preliminary numerical results are given in Table \ref{table2} and Figures \ref{fig:01} and \ref{fig:02}. Based on these results, the lower bounds that we have derived for the operator norm $\Vert \mD\mathcal{f}^{-1}_K(\bt^*_{\rm FCI})\Vert^{-1}_{\mathbb{V}_K^* \to \mathbb{V}_K}$ are fairly sharp at equilibrium but tend to degrade in the bond dissociation~regime. 

\begin{table}[h!]
	\centering
	\begin{tabular}{||c| c| c| c| c||} 
		\hline \hline
		Molecule & $\Vert \bt^*_{\rm FCI}\Vert_{\mathbb{V}_K} $ &  \shortstack{Monotonicity constant \\$\Gamma$ from Eq. \eqref{eq:loc_mono_2}} &  $\Vert\mD\mathcal{f}^{-1}_K(t^*_{\rm FCI})\Vert^{-1}_{\mathbb{V}_K^* \to \mathbb{V}_K}$  & $\nicefrac{\gamma_K}{\Theta_K}$\\ [0.5ex] 
		\hline\hline
		{\rm BeH2} & 0.2343 &  \hphantom{-}0.0363 & 0.3379 &0.2568\\ 
		{\rm BH3} & 0.2844   & {\color{red}-0.0950} &  0.3060&0.2081\\
		\rm{HF} & 0.2038   & {\color{red}-0.0083} &0.2628&0.2164\\
		\rm{H2O} & 0.2687   &  \hphantom{-}0.0249 &0.4113&0.2784\\ 
		\rm{LiH} & 0.1792   & {\color{red}-0.0065} &0.2995&0.2529\\
		\rm{NH3} & 0.3074   & {\color{red}-0.0325} &0.3576&0.2789\\[1ex] 
		\hline\hline
	\end{tabular}
	\vspace{2mm}
	\caption{Examples of numerically computed constants for a collection of small molecules at equilibrium geometries. The calculations were performed in STO-6G basis sets with the exception of the HF and LiH molecules for which 6-31G basis sets were used. To simplify calculations, the canonical $\widehat{\mathcal{H}}^1$ norm was replaced with an equivalent norm induced by the diagonal part of the shifted Full-CI Hamiltonian.}\label{table2}
\end{table}

\begin{figure}[h!]
	\centering
	\begin{subfigure}{0.495\textwidth}
		\centering
		\includegraphics[width=\textwidth, trim={0cm, 0cm, 0cm, 0cm},clip=true]{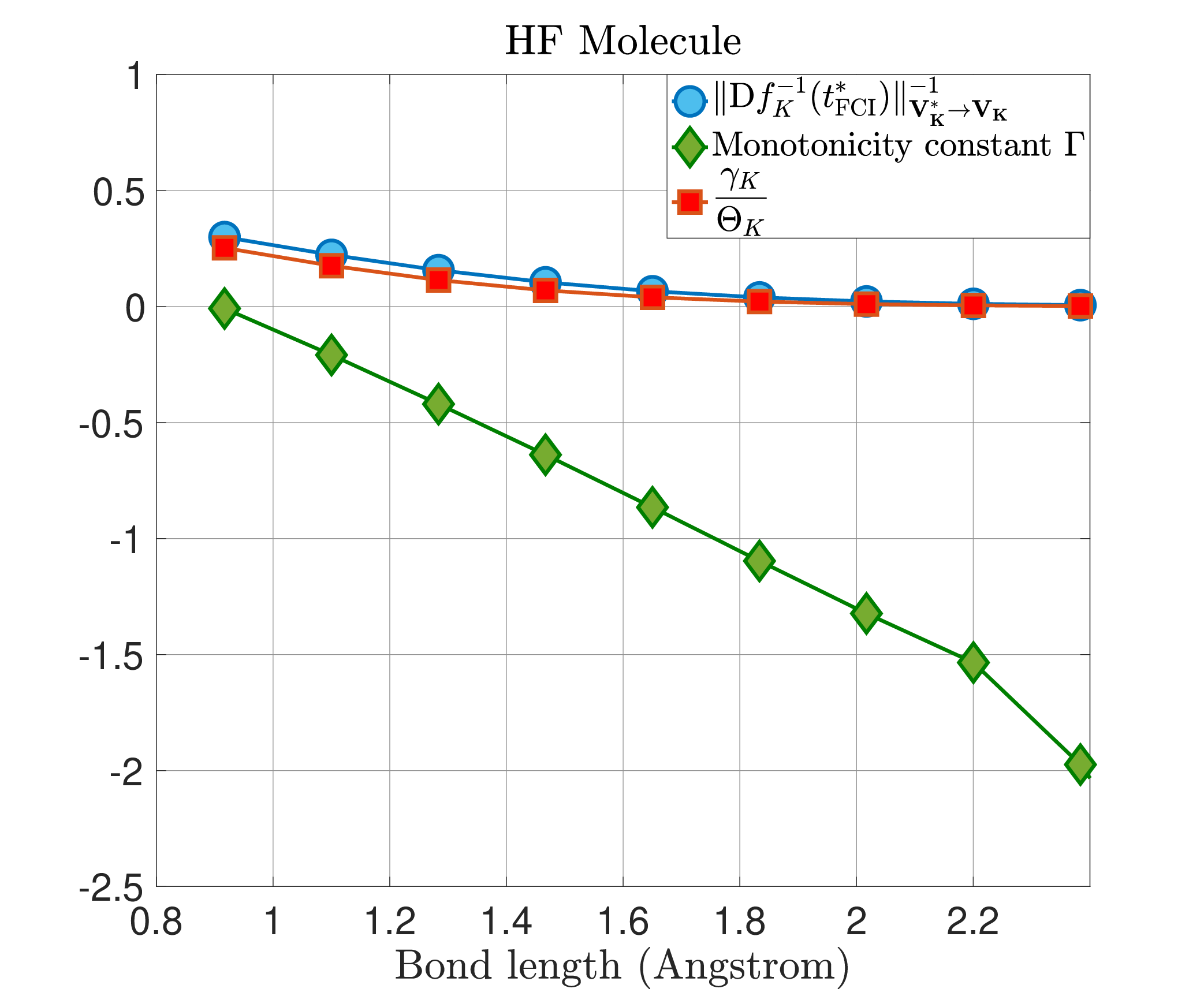} 
	\end{subfigure}\hfill
	\begin{subfigure}{0.495\textwidth}
		\centering
		\includegraphics[width=\textwidth, trim={0cm, 0cm, 0cm, 0cm},clip=true]{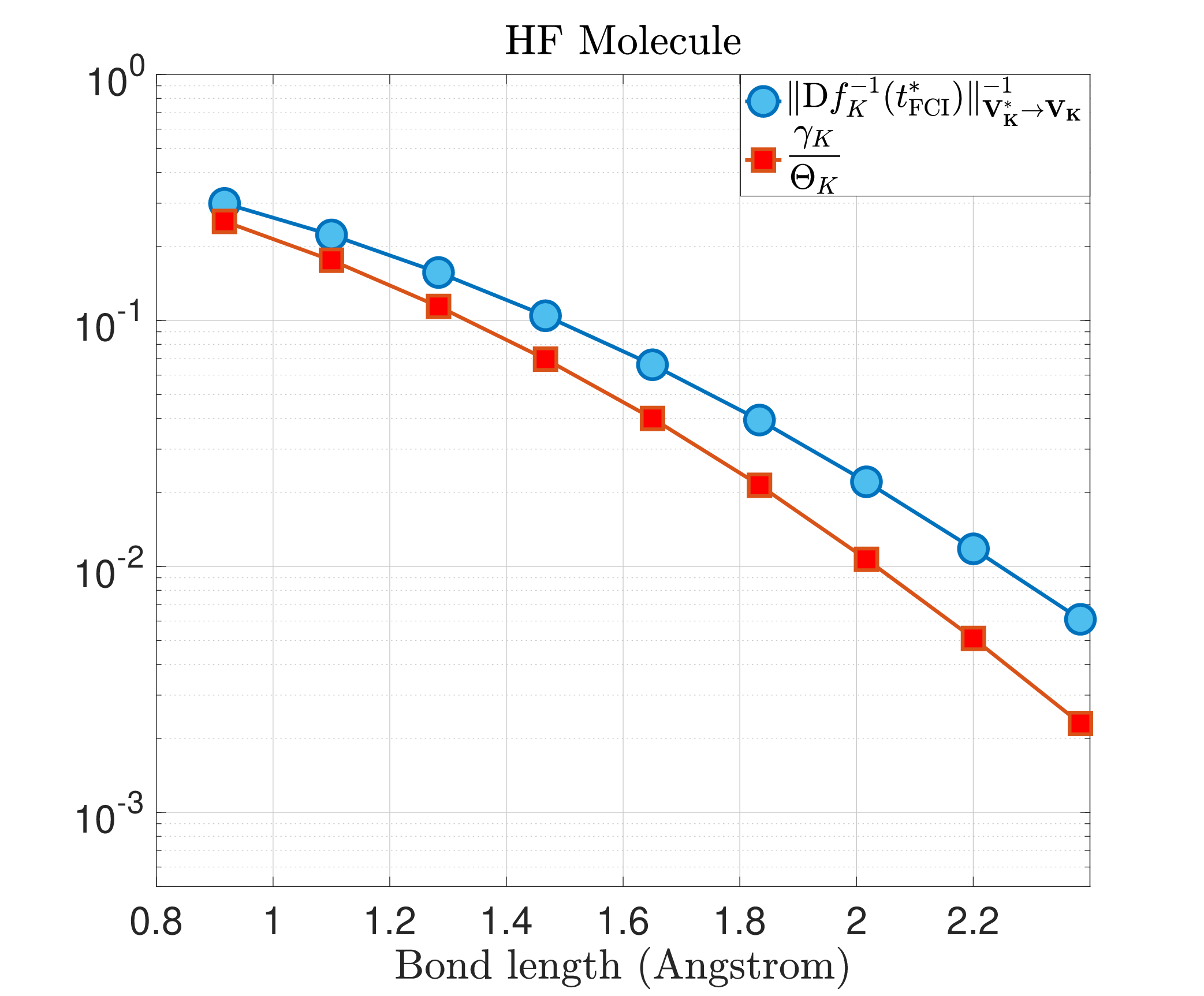} 
	\end{subfigure}
	\caption{Numerically computed constants for the HF molecule at different bond lengths. The equilibrium bond length is 0.9168 Angstrom. The figure on the right uses a log scale on the y-axis.}
	\label{fig:01}
\end{figure}

\begin{figure}[h!]
	\centering
	\begin{subfigure}{0.495\textwidth}
		\centering
		\includegraphics[width=\textwidth, trim={0cm, 0cm, 0cm, 0cm},clip=true]{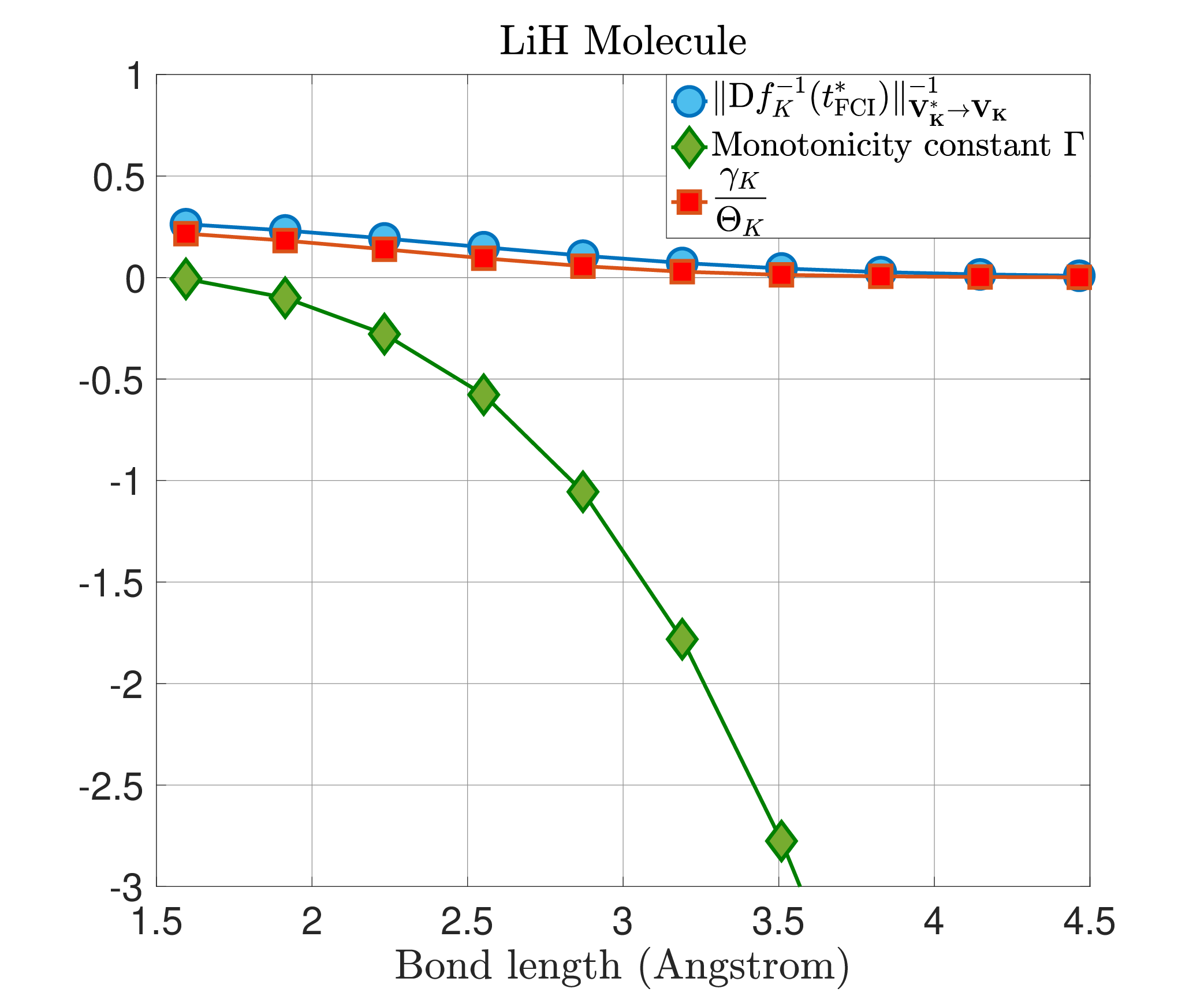} 
	\end{subfigure}\hfill
	\begin{subfigure}{0.495\textwidth}
		\centering
		\includegraphics[width=\textwidth, trim={0cm, 0cm, 0cm, 0cm},clip=true]{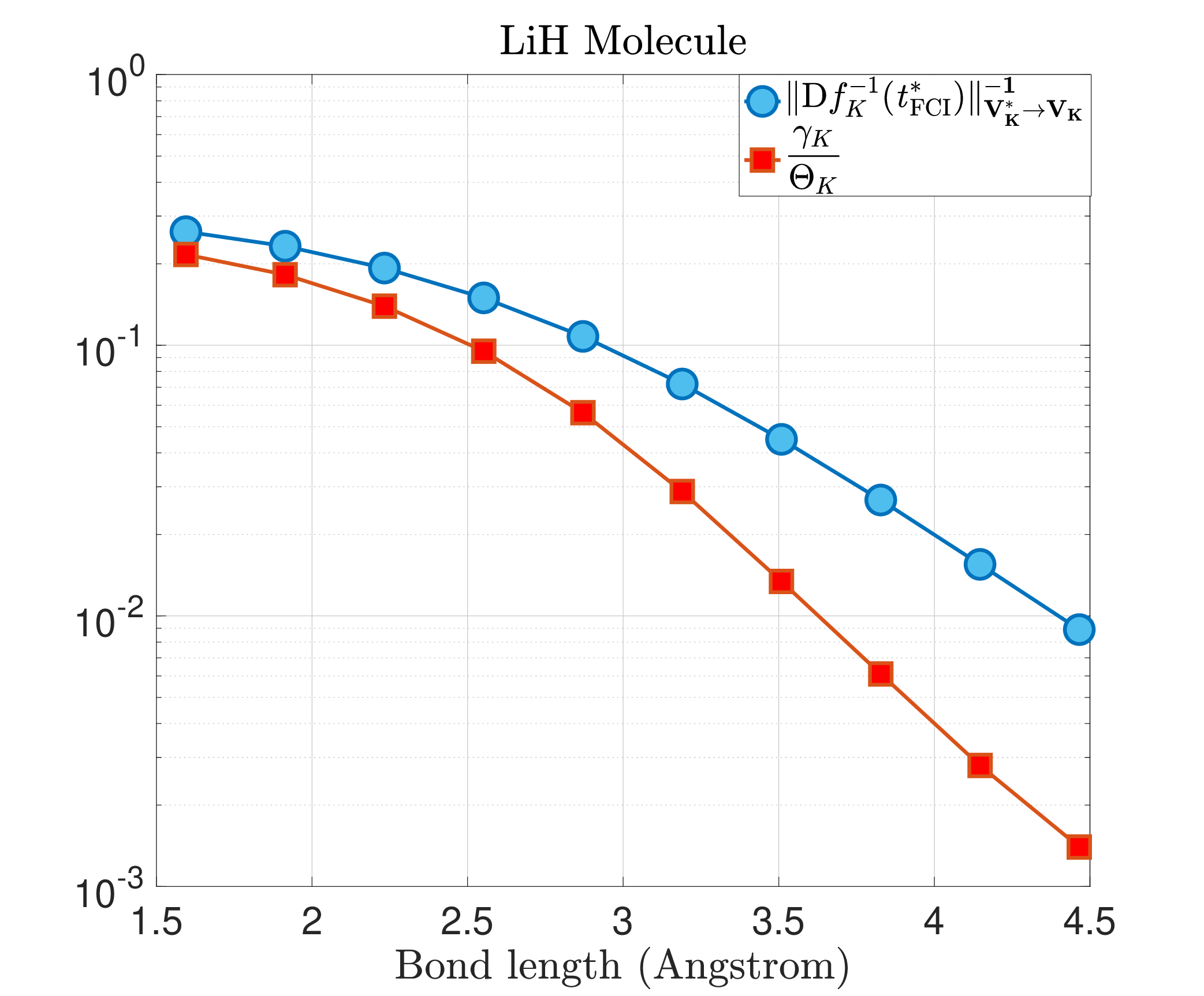} 
	\end{subfigure}
	\caption{Numerically computed constants for the LiH molecule at different bond lengths. The equilibrium bond length is 1.5949 Angstrom. The figure on the right uses a log scale on the y-axis.}
	\label{fig:02}
\end{figure}

We now turn to the second goal of this section, namely to a study of the error between solutions $\bt_K^* \in \mathbb{V}_K$ of the Full-CC equations \eqref{eq:Galerkin_truncated_CC_1} and solutions $\bt^* \in \mathbb{V}$ of the continuous coupled cluster equations \eqref{eq:Galerkin_Full_CC}. Since the Full-CC equations are simply Galerkin discretisations of the continuous CC equations, their local well-posedness can be deduced from classical results in non-linear numerical analysis. Indeed, we merely have to obtain an appropriate invertibility result for the coupled cluster Fr\'echet derivative restricted to the coefficient subspaces $\{\mathbb{V}_K\}_{K \geq N}$ and we must establish that the subspaces $\{\mathbb{V}_K\}_{K \geq N}$ have the approximation property with respect to $\mathbb{V}$. The latter demonstration is a simple consequence of the density of $\underset{K\geq N}{\cup}\mathbb{V}_K$ in $\mathbb{V}$ which has already been proven in Lemma~\ref{lem:coeff_space_finite}. We therefore focus on obtaining the required invertibility result.

We begin by defining projection operators corresponding to the various finite-dimensional approximation spaces we have introduced.

\vspace{5mm}
\begin{definition}[Projection Operators]\label{def:proj}~
	
	Let $\mathcal{V}_K= \text{\rm span}\{\Psi_0\} \cup \widetilde{\mathcal{V}}_K \subset \widehat{\mathcal{H}}^1$ denote the finite-dimensional $N$-particle approximation space for $K \geq N$ and let $\mathbb{V}_K \subset \mathbb{V}$ denote the Hilbert subspace of coefficients as defined through Definition \ref{def:coeff_space_finite}. Then
	\begin{itemize}
		\item We denote by $\mathbb{P}_{K} \colon \widehat{\mathcal{H}}^1 \rightarrow \widehat{\mathcal{H}}^1$, the $\widehat{\mathcal{H}}^1$-orthogonal projection operator onto $\mathcal{V}_K$ and by $\mathbb{P}_K^{\perp}$ its complement, i.e., $\mathbb{P}_K^{\perp}= \mathbb{I}-\mathbb{P}_K$. 
		
		\item We denote by $\Pi_{K} \colon \mathbb{V} \rightarrow \mathbb{V}$, the $( \cdot, \cdot )_{\mathbb{V}}$-orthogonal projection operator onto $\mathbb{V}_K$ and by $\Pi_K^{\perp}$ its complement, i.e., $\Pi_K^{\perp}= \mathbb{I}-\Pi_K$.
		
	\end{itemize}
\end{definition}

\begin{Notation}[Cluster Operators Involving Projections]\label{not:proj}~
	
	Consider the setting of Definition \ref{def:proj} and let $\bt \in \mathbb{V}$. In the sequel, we will frequently consider cluster operators generated by $\Pi_K \bt$ or $\Pi_K^{\perp}\bt$. We will therefore use the notation $\mathcal{T}(\Pi_K)$ and $ \mathcal{T}(\Pi_K^{\perp})$ respectively to denote these cluster operators, i.e., we denote
	\begin{align*}
		\mathcal{T}(\Pi_K):=& \sum_{\mu \in \mathcal{I}^{K}} \bs_{\mu} \mathcal{X}_{\mu} \quad \text{where }~ \{\bs_{\mu}\}_{\mu\in \mathcal{I}^K} = \Pi_K \bt \in \mathbb{V}_K, \quad \text{and}\\
		\mathcal{T}(\Pi_K^{\perp}):=& \sum_{\mu \in \mathcal{I}} \br_{\mu} \mathcal{X}_{\mu} \quad \hspace{2mm}\text{where }~ \{\br_{\mu}\}_{\mu \in \mathcal{I}} = \Pi_K^{\perp} \bt \in \mathbb{V}.
	\end{align*}
	
\end{Notation}

We are now ready to state the main technical lemma of this section. We emphasise that the proof of this lemma assumes that any isolated, simple eigenpair of the electronic Hamiltonian can be approximated by a sequence of simple eigenpairs of the Full-CI Hamiltonian.

\begin{lemma}[Invertibility of the coupled cluster Fr\'echet derivative on $\mathbb{V}_K$]\label{lem:Galerkin_inv_1}~
	
	Let the coupled cluster function $\mathcal{f} \colon \mathbb{V} \rightarrow \mathbb{V}^* $ be defined through Definition \ref{def:CC_function}, for any $\bt \in \mathbb{V}$ let $\mD \mathcal{f}(\bt)$ denote the Fr\'echet derivative of the coupled cluster function as defined through Equation \eqref{eq:CC_Jac_1}, let $\bt^* \in \mathbb{V}$ denote a zero of the coupled cluster function corresponding to an intermediately normalised eigenfunction $\Psi^* \in \widehat{\mathcal{H}}^1$ of the electronic Hamiltonian $H \colon\widehat{\mathcal{H}}^1 \rightarrow \widehat{\mathcal{H}}^{-1}$ with isolated, non-degenerate eigenvalue $\mathcal{E}^*$, let $\mathbb{V}_K \subset \mathbb{V}$ denote the Hilbert subspace of coefficients as defined through Definition~\ref{def:coeff_space_finite} for $K \geq N$, let the Full-CI Hamiltonian ${H}_{K} \colon \mathcal{V}_K \rightarrow \mathcal{V}_K^*$ be defined according to Definition \ref{eq:FCI_Hamiltonian} and assume that there exists a sequence of simple eigenpairs $(\Psi_K^*, \mathcal{E}_K^*) \in \mathcal{V}_K \times \mathbb{R}$ of the Full-CI Hamiltonians $\{{H}_K\}_{K \geq N}$, i.e.,
	\begin{align} \label{eq:new_hassan_0}
		\forall \Phi_K \in \mathcal{V}_K \colon \qquad \langle \Phi_K, {H}_K \Psi_K^*\rangle_{\mathcal{V}_K \times \mathcal{V}_K^*}&= \mathcal{E}_K^*  \langle \Phi_K, \Psi_K^*\rangle_{\widehat{\mathcal{H}}^1 \times \widehat{\mathcal{H}}^{-1}} ~\text{ with }~ \mathcal{E}_K^* \text{ simple} \quad \text{and such that}\\[2em]
		\lim_{K \to \infty} \Vert \Psi^* - \Psi_K^*\Vert_{\widehat{\mathcal{H}}^1} =0, \quad &\lim_{K \to \infty}\vert \mathcal{E}^* - \mathcal{E}_K^* \vert =0. \nonumber
	\end{align}
	Then for all $K$ sufficiently large, there exist a constant $\gamma_K >0$ uniformly bounded below in $K$, a constant $\Theta_K >0$ uniformly bounded above in $K$, a constant $\varepsilon_K> 0$ such that $\underset{K\to \infty}{\lim} \varepsilon_K =0$, a constant $\omega_K> 0$ such that $\underset{K\to \infty}{\lim} \omega_K =1$, and we have the estimate
	\begin{align*}
		\inf_{0\neq \bw_K \in \mathbb{V}_K} \sup_{0 \neq \bs_K \in \mathbb{V}_K}\frac{ \left \langle \bw_K, \mD \mathcal{f}(\bt^*)\bs_K\right \rangle_{\mathbb{V} \times \mathbb{V}^*}}{\Vert \bw_K\Vert_{\mathbb{V}} \Vert \bs_K\Vert_{\mathbb{V}}} \geq \frac{\nicefrac{\gamma_{K}}{\omega_K} - \varepsilon_K}{\Theta_K}.
	\end{align*}    
\end{lemma}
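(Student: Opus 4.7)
The strategy is to derive the discrete inf-sup bound for $\mD\mathcal{f}(\bt^*)$ on $\mathbb{V}_K$ by comparison with the Galerkin Fr\'echet derivative $\mD\mathcal{f}_K(\bt_K^*)$ at the Full-CI counterpart $\bt_K^*$, combined with a perturbation argument that exploits the convergence of the Full-CI eigenpair. Since $\Psi^*$ is intermediately normalisable and $\Psi_K^* \to \Psi^*$ in $\widehat{\mathcal{H}}^1$, the discrete eigenfunction $\Psi_K^*$ is also intermediately normalisable for all $K$ sufficiently large; by Theorems \ref{thm:Yvon_2} and \ref{thm:CC_truncated_full}, there exists a unique zero $\bt_K^* \in \mathbb{V}_K$ with $\Psi_K^* = e^{\mathcal{T}_K^*}\Psi_0$, and the bilinear form associated with $\mD\mathcal{f}_K(\bt_K^*)$ satisfies a discrete inf-sup bound with constant at least $\gamma_K/\Theta_K$.

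The first ingredient I would establish is that $\bt_K^* \to \bt^*$ in $\mathbb{V}$, where $\bt_K^*$ is identified with its extension by zero to $\mathcal{I}$. Since $e^{\mathcal{T}_K^*}\Psi_0 = \Psi_K^* \to \Psi^* = e^{\mathcal{T}^*}\Psi_0$ in $\widehat{\mathcal{H}}^1$ and the exponential is a local $\mathscr{C}^\infty$ diffeomorphism on the cluster algebra $\mathfrak{L}$ (Theorem \ref{pro:excit}), the operators $\mathcal{T}_K^*$ converge to $\mathcal{T}^*$ in $\widehat{\mathcal{H}}^1\to\widehat{\mathcal{H}}^1$ norm; the two-sided equivalence $\Vert \bt\Vert_{\mathbb{V}} \leq \Vert \mathcal{T}\Vert_{\widehat{\mathcal{H}}^1\to\widehat{\mathcal{H}}^1} \leq \beta\Vert \bt\Vert_{\mathbb{V}}$ from Theorem \ref{pro:excit} then delivers $\bt_K^* \to \bt^*$ in $\mathbb{V}$. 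This immediately yields uniform upper bounds on $\Theta_K$, since $\Theta_K$ is bounded by a continuous function of $\Vert \bt_K^*\Vert_{\mathbb{V}}$ via the continuity estimates of the exponential. For the uniform lower bound on $\gamma_K$, I would adapt the spectral-gap argument of Remark \ref{rem:interp_const} to the Full-CI spectrum: the min-max principle together with the density $\overline{\cup_K \mathcal{V}_K}^{\Vert\cdot\Vert_{\widehat{\mathcal{H}}^1}} = \widehat{\mathcal{H}}^1$ ensures that the eigenvalues of $H_K$ neighbouring $\mathcal{E}_K^*$ converge to those of $H$, so the Full-CI spectral gap at $\mathcal{E}_K^*$ is eventually bounded below by, say, half the continuous gap at $\mathcal{E}^*$. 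The prefactor $\omega_K$, which tends to $1$, absorbs the discrete analogue of the norm-equivalence constant between $\Vert\cdot\Vert_{\widehat{\mathcal{H}}^1}$ and $|||\cdot|||_{\widehat{\mathcal{H}}^1}$ restricted to $\mathcal{V}_K$.

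The final step is to compare the two bilinear forms. Using Corollary \ref{cor:CC_der} (at $\bt^*$) and its evident Full-CI analogue (at $\bt_K^*$), for $\bw_K,\bs_K \in \mathbb{V}_K$ the difference
\[
\bigl\langle \bw_K,\; \bigl(\mD\mathcal{f}(\bt^*) - \mD\mathcal{f}_K(\bt_K^*)\bigr)\bs_K\bigr\rangle_{\mathbb{V}\times\mathbb{V}^*}
\]
equals $\bigl\langle \mathcal{W}_K\Psi_0,\; \bigl[e^{-\mathcal{T}^*}(H-\mathcal{E}^*)e^{\mathcal{T}^*} - e^{-\mathcal{T}_K^*}(H-\mathcal{E}_K^*)e^{\mathcal{T}_K^*}\bigr]\mathcal{S}_K\Psi_0\bigr\rangle_{\widehat{\mathcal{H}}^1\times\widehat{\mathcal{H}}^{-1}}$ and is therefore bounded by $\Vert\bw_K\Vert_{\mathbb{V}}\Vert\bs_K\Vert_{\mathbb{V}}$ times an operator-norm difference which, by the Lipschitz continuity of the exponential on $\mathfrak{L}$ and the boundedness of $H\colon\widehat{\mathcal{H}}^1\to\widehat{\mathcal{H}}^{-1}$, is controlled by $C(\Vert \bt^*-\bt_K^*\Vert_{\mathbb{V}} + \vert \mathcal{E}^*-\mathcal{E}_K^*\vert) =: \varepsilon_K \to 0$. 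Combining the Full-CC discrete inf-sup of step one with this perturbation estimate through the standard triangle-inequality argument for inf-sup constants yields the announced lower bound $(\gamma_K/\omega_K - \varepsilon_K)/\Theta_K$.

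The main obstacle is the quantitative uniform lower bound on the discrete inf-sup constant $\gamma_K$. While Galerkin convergence of the Full-CI spectral gap follows from classical min-max arguments, translating it into an inf-sup on $\{\Psi_K^*\}^\perp \cap \mathcal{V}_K$ with respect to the canonical $\Vert\cdot\Vert_{\widehat{\mathcal{H}}^1}$ norm requires careful bookkeeping, because $\{\Psi^*\}^\perp$ and $\{\Psi_K^*\}^\perp$ do not agree on $\mathcal{V}_K$; it is precisely this mismatch, mediated by a Galerkin analogue of the norm-equivalence constant $c_{\rm equiv}$ introduced in Remark \ref{rem:interp_const}, that forces the appearance of the prefactor $\omega_K$.
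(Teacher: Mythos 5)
Your proposal is correct in outline, but it follows a genuinely different route from the paper's proof. The paper never invokes the discrete well-posedness result of Theorem \ref{thm:CC_truncated_full}: it proves the inf-sup bound for $\mD\mathcal{f}(\bt^*)$ on $\mathbb{V}_K$ directly, by choosing for each $\Phi^*_{K,\perp}\in\{\Psi_K^*\}^{\perp}\subset\mathcal{V}_K$ the explicit test element $\Phi_{\mathcal{S}}=\mathbb{P}_0^{\perp}e^{-\mathcal{T}^*(\Pi_K)}\Phi^*_{K,\perp}\in\widetilde{\mathcal{V}}_K$, splitting the resulting bilinear form into a main term controlled by the inf-sup of the shifted Full-CI Hamiltonian $H_K-\mathcal{E}^*$ on $\{\Psi_K^*\}^{\perp}$ and perturbation terms: $\varepsilon_{1,K}$ from the tail $e^{\mathcal{T}^*(\Pi_K^{\perp})}-\mathbb{I}$, $\varepsilon_{2,K}$ from $e^{-(\mathcal{T}^*)^{\dagger}}-e^{-(\mathcal{T}^*_K)^{\dagger}}$ (controlled by $\Vert\Psi^*-\Psi_K^*\Vert_{\widehat{\mathcal{H}}^1}$, Eq. \eqref{eq:Hassan_latest}), and $\omega_K=\lim$-$1$ factor arising from replacing $e^{-(\mathcal{T}^*_K)^{\dagger}}$ by $e^{-(\mathcal{T}^*)^{\dagger}}$ in the lower bound --- not, as you suggest, from a norm-equivalence constant; in your scheme you may simply take $\omega_K\equiv 1$. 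You instead use a standard "discrete stability at $\bt^*_K$ plus consistency" argument: the inf-sup for $\mD\mathcal{f}_K(\bt_K^*)$ from Theorem \ref{thm:CC_truncated_full}, combined with a perturbation bound on $\mD\mathcal{f}(\bt^*)-\mD\mathcal{f}(\bt^*_K)$ restricted to $\mathbb{V}_K$ (which could equally be obtained from the local Lipschitz continuity of Corollary \ref{cor:CC_der_Lipschitz} once $\bt^*_K\to\bt^*$ in $\mathbb{V}$ is known). This is shorter and conceptually cleaner, and it yields constants attached to the computable discrete problem; the paper's construction buys explicit constants ($\gamma_K$ for $H_K-\mathcal{E}^*$, $\Theta_K$ built from the exact amplitudes $\bt^*$ and the projection $\Pi_K\bt^*$) that are cited verbatim in Theorem \ref{thm:Galerkin_full_CC}, and it requires only $\Psi_K^*\to\Psi^*$ rather than any statement about the discrete amplitudes. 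Two small points to tighten in your write-up: the passage from $\Psi_K^*\to\Psi^*$ in $\widehat{\mathcal{H}}^1$ to operator-norm convergence of the exponentials (hence of $\bt_K^*$ to $\bt^*$) is not immediate from the diffeomorphism property alone --- it goes through the linear parametrisation $\Psi=\Psi_0+\mathcal{R}\Psi_0$ and $\Vert\mathcal{R}_K-\mathcal{R}\Vert_{\widehat{\mathcal{H}}^1\to\widehat{\mathcal{H}}^1}\leq\beta\Vert\Psi_K^*-\Psi^*\Vert_{\widehat{\mathcal{H}}^1}$, exactly as the paper does in one line in Eq. \eqref{eq:Hassan_latest}; and the Lipschitz bound on $e^{-\mathcal{T}}$ acting on $\widehat{\mathcal{H}}^{-1}$ should be obtained by duality from the de-excitation bound of Theorem \ref{pro:excit}. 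Your treatment of the uniform lower bound on $\gamma_K$ via min-max convergence and Remark \ref{rem:interp_const} is at the same (brief) level of justification as the paper's.
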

\begin{proof}

	Let $\bw_K= \{\bw_{\mu}\}_{\mu \in \mathcal{I}^K}, \bs_K=\{\bs_{\mu}\}_{\mu \in \mathcal{I}^K} \in \mathbb{V}_K$ be arbitrary and let the bounded linear operator $\mathcal{A}(\bt^*) \colon \widetilde{\mathcal{V}}\rightarrow \widetilde{\mathcal{V}}^*$ be defined according to Definition \ref{def:A_op}. It follows from Corollary \ref{cor:CC_der} that
	\begin{align*}
		\left\langle \bw_K, \mD \mathcal{f}(\bt^*)\bs_K \right\rangle_{\mathbb{V} \times \mathbb{V}^*} &= \left\langle \mathcal{W}_K \Psi_0, \mathcal{A}(\bt^*) \mathcal{S}_K\Psi_0 \right\rangle_{\widehat{\mathcal{H}}^1 \times \widehat{\mathcal{H}}^{-1}}= \left\langle \mathcal{W}_K \Psi_0, e^{-\mathcal{T}^*} \left({H}- \mathcal{E}^*\right)e^{\mathcal{T}^*} \mathcal{S}_K \Psi_0\right\rangle_{\widehat{\mathcal{H}}^1 \times \widehat{\mathcal{H}}^{-1}},
	\end{align*}
	where $\mathcal{W}_K:= \sum_{\mu \in \mathcal{I}^K} \bw_{\mu} \mathcal{X}_{\mu}$ and $\mathcal{S}_K:= \sum_{\mu \in \mathcal{I}^K} \bs_{\mu} \mathcal{X}_{\mu}$. To avoid tedious notation, let us define $\Phi_{\mathcal{W}}:= \mathcal{W}_K \Psi_0 \in \widetilde{\mathcal{V}}_K$ and $\Phi_{\mathcal{S}}:= \mathcal{S}_K \Psi_0\in \widetilde{\mathcal{V}}_K$. Obviously, we now have
	\begin{align}\label{eq:inv_new_0}
		\left \langle \Phi_{\mathcal{W}}, e^{-\mathcal{T}^*} \left({H}- \mathcal{E}^*\right)e^{\mathcal{T}^*} \Phi_{\mathcal{S}} \right \rangle_{\widehat{\mathcal{H}}^1 \times \widehat{\mathcal{H}}^{-1}}= \left \langle e^{\mathcal{T}^*}\Phi_{\mathcal{S}},  \left({H}- \mathcal{E}^*\right)e^{-(\mathcal{T}^*)^{\dagger}} \Phi_{\mathcal{W}} \right \rangle_{\widehat{\mathcal{H}}^{1} \times \widehat{\mathcal{H}}^{-1}}.
	\end{align}

	Since $\Psi^*$ is intermediately normalisable by assumption, there exists $\widetilde{K_0} \in \mathbb{N}$ such that for all $K \geq \widetilde{K_0}$, the eigenfunction $\Psi_K^* \in \mathcal{V}_K$ is intermediately normalisable. In the remainder of this proof, we assume that indeed $K \geq \widetilde{K_0}$ and we denote by $\bt_K^*:= \{\bt^*_{\mu}\}_{\mu \in \mathcal{I}} \in \mathbb{V}_K$ the coefficient vector with the property that $\Psi_K^*= e^{\mathcal{T}_K^*}\Psi_0$ where $\mathcal{T}_K^* := \sum_{\mu \in \mathcal{I}^{K}} \bt^*_{\mu}\mathcal{X}_{\mu}$. Let us emphasise here that since $\Psi_K^*$ is an eigenfunction of the Full-CI Hamiltonian, it follows from Theorem \ref{thm:Yvon_2} that $\bt_K^*$ is a zero of the restricted coupled cluster function $\mathcal{f}_K \colon \mathbb{V}_K \rightarrow \mathbb{V}_K^*$ defined through Definition \ref{def:CC_function_FCI}.
	
	Recalling now that $\Phi_{\mathcal{S}}$ is arbitrary (due to the fact that the sequence $\bs_K=\{\bs_{\mu}\}_{\mu \in \mathcal{I}^K} \in \mathbb{V}_K$ was chosen arbitrarily), we may in particular set for any $\Phi^*_{K, \perp}\in \{ \Psi_K^*\}^{\perp} \subset\mathcal{V}_K$:
	\begin{align*}
		\Phi_{\mathcal{S}}:= \mathbb{P}_0^{\perp} e^{-\mathcal{T}^*({\Pi_K})} \Phi^*_{K, \perp}\in \widetilde{\mathcal{V}}_K,
	\end{align*}
	where $\mathcal{T}^*({\Pi_K})$ denotes the cluster operator generated by $\Pi_K \bt^*\in \mathbb{V}_K$ (recall Notation \ref{not:proj}) and we have used the fact that, thanks to the properties of the excitation operators $\{\mathcal{X}_{\mu}\}_{\mu \in \mathcal{I}}$ given by Remark \ref{rem:properties}, it holds that $e^{-\mathcal{T}^*(\Pi_K)} \Psi_K \in \mathcal{V}_K$ for any $\Psi_K \in \mathcal{V}_K$.

	Plugging in this choice of $\Phi_{\mathcal{S}}$ in Equation \eqref{eq:inv_new_0} now yields
	\begin{align}\nonumber
		\left \langle \mathcal{W}_K \Psi_0, \mathcal{A}(\bt^*) \mathcal{S}_K\Psi_0\right\rangle_{\widehat{\mathcal{H}}^1 \times \widehat{\mathcal{H}}^{-1}}&= \underbrace{\left \langle e^{\mathcal{T}^*}e^{-\mathcal{T}^*(\Pi_K)}  \Phi^*_{K, \perp},  \left({H}- \mathcal{E}^*\right)e^{-(\mathcal{T}^*)^{\dagger}}\Phi_{\mathcal{W}} \right \rangle_{\widehat{\mathcal{H}}^{1} \times \widehat{\mathcal{H}}^{-1}}}_{:=\rm (I)}\\ \label{eq:inv_new_1}
		&- \underbrace{\left \langle e^{\mathcal{T}^*} \mathbb{P}_0 e^{-\mathcal{T}^*(\Pi_K)}  \Phi^*_{K, \perp},  \left({H}- \mathcal{E}^*\right)e^{-(\mathcal{T}^*)^{\dagger}}\Phi_{\mathcal{W}} \right \rangle_{\widehat{\mathcal{H}}^{1} \times \widehat{\mathcal{H}}^{-1}}}_{:= \rm (II)}.
	\end{align}
	
	We claim that the term (II) is identically zero. Indeed, using the fact that $e^{\mathcal{T}^*} \Psi_0=\Psi^*$ by assumption, a straightforward calculation shows that
	\begin{align*}
		{\rm (II)}&=\left (  \Psi_0,  e^{-\mathcal{T}^*(\Pi_K)}  \Phi^*_{K, \perp}\right )_{\widehat{\mathcal{L}}^2} \left \langle e^{\mathcal{T}^*} \Psi_0,  \left({H}- \mathcal{E}^*\right)e^{-(\mathcal{T}^*)^{\dagger}}\Phi_{\mathcal{W}} \right \rangle_{\widehat{\mathcal{H}}^{1} \times \widehat{\mathcal{H}}^{-1}},
	\end{align*}
	and the second term in the product above is zero as ${H}e^{\mathcal{T}^*} \Psi_0= \mathcal{H}\Psi^*= \mathcal{E}^*\Psi^*$.
	
	It therefore remains to simplify the term (I). To this end, we observe that we can write
	\begin{align*}
		{\rm (I)}&=\left \langle e^{\mathcal{T}^*(\Pi_K^{\perp})}\Phi^*_{K, \perp},  \left({H}- \mathcal{E}^*\right)e^{-(\mathcal{T}^*)^{\dagger}}\Phi_{\mathcal{W}} \right \rangle_{\widehat{\mathcal{H}}^{1} \times \widehat{\mathcal{H}}^{-1}}\\[1em]
		&=\underbrace{\left \langle \Phi^*_{K, \perp},  \left({H}- \mathcal{E}^*\right)e^{-(\mathcal{T}^*)^{\dagger}}\Phi_{\mathcal{W}} \right \rangle_{\widehat{\mathcal{H}}^{1} \times \widehat{\mathcal{H}}^{-1}}}_{:= (\rm IA)}\\[1em]
		&+\underbrace{\left \langle \left(e^{\mathcal{T}^*(\Pi_K^{\perp})}-\mathbb{I}\right)\Phi^*_{K, \perp},  \left({H}- \mathcal{E}^*\right)e^{-(\mathcal{T}^*)^{\dagger}}\Phi_{\mathcal{W}} \right \rangle_{\widehat{\mathcal{H}}^{1} \times \widehat{\mathcal{H}}^{-1}}}_{:= (\rm IB)}.
	\end{align*}
	
	Focusing first on the term (IB) and using the Cauchy-Schwarz inequality, we may write
	\begin{align}\label{eq:inv_new_3}
		{\rm (IB)}&\geq -\left \Vert e^{\mathcal{T}^*(\Pi_K^{\perp})}-\mathbb{I}\right \Vert_{\widehat{\mathcal{H}}^{1} \to {\widehat{\mathcal{H}}^{1}}} \left \Vert {H}-\mathcal{E}^*\right\Vert_{\widehat{\mathcal{H}}^{1} \to {\widehat{\mathcal{H}}^{-1}}} \big \Vert \Phi^*_{K, \perp}\big \Vert_{\widehat{\mathcal{H}}^{1}}\big \Vert e^{-(\mathcal{T}^*)^{\dagger}}\Phi_{\mathcal{W}}\big \Vert_{\widehat{\mathcal{H}}^{1}}.
	\end{align}
	
	We now claim that in fact
	\begin{align*}
		\lim_{K \to \infty} \left \Vert e^{\mathcal{T}^*(\Pi^{\perp}_K)}-\mathbb{I} \right \Vert_{\widehat{\mathcal{H}}^{1} \to \widehat{\mathcal{H}}^{1}}=0.
	\end{align*}
	
	Indeed, thanks to the boundedness properties of the excitation operators given in Theorem~\ref{pro:excit}, it holds that
	\begin{align*}
		\Vert e^{-\mathcal{T}^*(\Pi_K)}\Vert_{\widehat{\mathcal{H}}^{1} \to \widehat{\mathcal{H}}^{1}} \leq e^{\Vert \mathcal{T}^*(\Pi_K)\Vert_{\widehat{\mathcal{H}}^{1} \to \widehat{\mathcal{H}}^{1}}} \leq e^{\beta \Vert \Pi_K\bt^*\Vert_{\mathbb{V}}} \leq e^{\beta \Vert \bt^*\Vert_{\mathbb{V}}},
	\end{align*}
	where the constant $\beta>0$ depends only on $N$.
	
	Therefore, we need only show that $\lim_{K \to \infty} \Vert e^{\mathcal{T}^*} - e^{\mathcal{T}^*(\Pi_K)} \Vert_{\widehat{\mathcal{H}}^{1} \to \widehat{\mathcal{H}}^{1}}=0$. Recall however, that the exponential function is of class $\mathscr{C}^{\infty}$ on the algebra of bounded operators on $\widehat{\mathcal{H}}^1$, and thus it suffices to show that
	\begin{align*}
		\lim_{K \to \infty} \Vert \mathcal{T}^*- \mathcal{T}^*(\Pi_K)\Vert_{\widehat{\mathcal{H}}^{1} \to \widehat{\mathcal{H}}^{1}}=0.
	\end{align*}
	But this is an obvious consequence of the density of the coefficient spaces $\{\mathbb{V}_K\}_{K\geq N}$ in $\mathbb{V}$. Indeed,
	\begin{align}\label{eq:inv_new_3b}
		\lim_{K \to \infty} \Vert \mathcal{T}^*- \mathcal{T}^*(\Pi_K)\Vert_{\widehat{\mathcal{H}}^{1} \to \widehat{\mathcal{H}}^{1}} \leq \beta \lim_{K \to \infty} \Vert \bt^* - \Pi_K \bt^*\Vert_{\mathbb{V}}=0.
	\end{align}

	Consequently, combining Equations \eqref{eq:inv_new_3} and \eqref{eq:inv_new_3b}, we obtain the existence of a constant $\varepsilon_{1, K} >0$ with the property that $\lim_{K \to \infty}\varepsilon_{1, K} =0$ and such that 
	\begin{equation}\label{eq:new_hassan}
		{\rm (IB)}\geq -\varepsilon_{1, K} \big \Vert \Phi^*_{K, \perp}\big \Vert_{\widehat{\mathcal{H}}^{1}}\big \Vert e^{-(\mathcal{T}^*)^{\dagger}}\Phi_{\mathcal{W}}\big \Vert_{\widehat{\mathcal{H}}^{1}}.
	\end{equation}

	Let us now return to the term (IA). Notice that we may write
	\begin{align*}
		{\rm (IA)}= \underbrace{\left \langle \Phi^*_{K, \perp},  \left({H}- \mathcal{E}^*\right)e^{-(\mathcal{T}^*_K)^{\dagger}}\Phi_{\mathcal{W}} \right \rangle_{\widehat{\mathcal{H}}^1 \times \widehat{\mathcal{H}}^{-1}} }_{:= (\rm IAA)} + \underbrace{\left \langle \Phi^*_{K, \perp},  \left({H}- \mathcal{E}^*\right)\left(e^{-(\mathcal{T}^*)^{\dagger}}-e^{-(\mathcal{T}^*_K)^{\dagger}}\right)\Phi_{\mathcal{W}} \right \rangle_{\widehat{\mathcal{H}}^{1} \times \widehat{\mathcal{H}}^{-1}}}_{:= (\rm IAB)},
	\end{align*}
	where we recall from Equation \eqref{eq:new_hassan_0} that $\bt^*_K = \{\bt^*_{\mu}\}_{\mu \in \mathcal{I}^K} \in \mathbb{V}_K$ is the coefficient vector such that $e^{\mathcal{T}_K^*}\Psi_0=\Psi_K^* \in \mathcal{V}_K$. 
	
	We first simplify the term (IAB). Thanks to the Cauchy-Schwarz inequality we may write
	\begin{align}
		{\rm (IAB)}\geq - \left \Vert e^{-(\mathcal{T}^*)^{\dagger}}-e^{-(\mathcal{T}^*_K)^{\dagger}}\right \Vert_{\widehat{\mathcal{H}}^{1} \to {\widehat{\mathcal{H}}^{1}}} \left \Vert e^{(\mathcal{T}^*)^{\dagger}}\right \Vert_{\widehat{\mathcal{H}}^{1} \to {\widehat{\mathcal{H}}^{1}}} \left \Vert {H}-\mathcal{E}^*\right\Vert_{\widehat{\mathcal{H}}^{1} \to {\widehat{\mathcal{H}}^{-1}}} \big \Vert \Phi^*_{K, \perp}\big \Vert_{\widehat{\mathcal{H}}^{1}}\big \Vert e^{-(\mathcal{T}^*)^{\dagger}}\Phi_{\mathcal{W}}\big \Vert_{\widehat{\mathcal{H}}^{1}}
	\end{align}
	
	We now claim that in fact $\lim_{K \to \infty} \left \Vert e^{-(\mathcal{T}^*)^{\dagger}}-e^{-(\mathcal{T}_K^*)^{\dagger}}\right \Vert_{\widehat{\mathcal{H}}^{1} \to {\widehat{\mathcal{H}}^{1}}} =0$. Indeed, an easy calculation using the continuity properties of cluster operators given by Theorem \ref{pro:excit}, shows the existence of a constant $\widetilde{\beta^{\dagger}}>0$, depending only on~$N$, such that for any $K \geq N$ it holds that 
	\begin{align}\label{eq:Hassan_latest}
		\left \Vert e^{-(\mathcal{T}^*)^{\dagger}}-e^{-(\mathcal{T}_K^*)^{\dagger}}\right \Vert_{\widehat{\mathcal{H}}^{1} \to {\widehat{\mathcal{H}}^{1}}} \leq \widetilde{\beta^{\dagger}} \left \Vert e^{\mathcal{T}^*}\Psi_0 -e^{\mathcal{T}_K^*}\Psi_0 \right \Vert_{\widehat{\mathcal{H}}^{1}}=  \left \Vert \Psi^*-\Psi_K^* \right \Vert_{\widehat{\mathcal{H}}^{1}}.
	\end{align}
	
	The claim now follows by using the convergence of the approximate eigenvector $\Psi_K^* \in \mathcal{V}_K$ to $\Psi^* \in \widehat{\mathcal{H}}^1$ from Equation~\eqref{eq:new_hassan_0}. Consequently, we obtain the existence of a constant $\varepsilon_{2, K} >0$ with the property that $\lim_{K \to \infty}\varepsilon_{2, K} =0$ and such that 
	\begin{equation}\label{eq:new_hassan_3}
		{\rm (IB)}\geq -\varepsilon_{2, K} \big \Vert \Phi^*_{K, \perp}\big \Vert_{\widehat{\mathcal{H}}^{1}}\big \Vert e^{-(\mathcal{T}^*)^{\dagger}}\Phi_{\mathcal{W}}\big \Vert_{\widehat{\mathcal{H}}^{1}}.
	\end{equation}

	Focusing finally on the term (IAA), a simple calculation shows that for any $\Phi_{\mathcal{W}} \in \widetilde{\mathcal{V}}_K$, we have that $e^{-(\mathcal{T}^*_K)^{\dagger}}\Phi_{\mathcal{W}} \in \{\Psi_K^*\}^{\perp} \subset \mathcal{V}_K$. Furthermore, $\mathcal{E}^*$ is a simple, isolated eigenvalue by assumption and $\lim_{K \to \infty} \mathcal{E}_K^* = \mathcal{E}^*$. Since $ \Phi^*_{K, \perp} \in \{\Psi_K^*\}^{\perp} \subset \mathcal{V}_K$ is arbitrary, we therefore deduce the existence of $\widehat{K_0} \in \mathbb{N}$ sufficiently large such that for all $K \geq \widehat{K_0}$ the shifted Full-CI Hamiltonian ${H}_K- \mathcal{E}^*$ satisfies an inf-sup condition on $\{\Psi_K^*\}^{\perp}  \subset \mathcal{V}_K$, and as a consequence, 
	\begin{equation}\label{eq:new_hassan_2}
		\sup_{0 \neq \Phi^*_{K, \perp} \in \{\Psi_K^*\}^{\perp}} \; \frac{\rm (IAA)}{\Vert \Phi^*_{K, \perp}\Vert_{\widehat{\mathcal{H}}^1}}= \sup_{0 \neq \Phi^*_{K, \perp} \in \{\Psi_K^*\}^{\perp}}  \frac{\big \langle \Phi^*_{K, \perp},  \left({H}- \mathcal{E}^*\right)e^{-(\mathcal{T}^*_K)^{\dagger}}\Phi_{\mathcal{W}} \big \rangle_{\widehat{\mathcal{H}}^{1} \times \widehat{\mathcal{H}}^{-1}}}{\Vert \Phi^*_{K, \perp}\Vert_{\widehat{\mathcal{H}}^1}} \geq \gamma_K \big\Vert e^{-(\mathcal{T}^*_K)^{\dagger}}\Phi_{\mathcal{W}}\big\Vert_{\widehat{\mathcal{H}}^{1}},
	\end{equation}
	where $\gamma_K$ denotes the inf-sup constant of the shifted Full-CI Hamiltonian ${H}_K- \mathcal{E}^*$ on $\{\Psi_K^*\}^{\perp} \subset \mathcal{V}_K$ for $K \geq \widehat{K_0}$. For the remainder of this proof, we assume that indeed $K \geq\widehat{K_0}$.
	
	Notice that this last bound can be written as
	\begin{align*}
		\gamma_K \big\Vert e^{-(\mathcal{T}^*_K)^{\dagger}}\Phi_{\mathcal{W}}\big\Vert_{\widehat{\mathcal{H}}^{1}}= \gamma_K \big\Vert e^{-(\mathcal{T}_K^*-\mathcal{T}^* )^{\dagger}} e^{-(\mathcal{T}^*)^{\dagger}}\Phi_{\mathcal{W}}\big\Vert_{\widehat{\mathcal{H}}^{1}}\geq \frac{\gamma_K}{\Vert  e^{(\mathcal{T}_K^*-\mathcal{T}^* )^{\dagger}}\Vert_{\widehat{\mathcal{H}}^{1} \to \widehat{\mathcal{H}}^{1}}}\big\Vert e^{-(\mathcal{T}^*)^{\dagger}}\Phi_{\mathcal{W}}\big\Vert_{\widehat{\mathcal{H}}^{1}},
	\end{align*}
	where the inequality follows from the invertibility of the exponential map. Using now a similar calculation to the one used to obtain Inequality \eqref{eq:Hassan_latest}, it can easily be shown that $\lim_{K \to \infty} \Vert  e^{(\mathcal{T}_K^*-\mathcal{T}^* )^{\dagger}}\Vert_{\widehat{\mathcal{H}}^{1} \to \widehat{\mathcal{H}}^{1}}=1$. Consequently, we obtain the existence of constant $\omega_{K}> 0$ with the property that $\lim_{K \to \infty} \omega_K =1$ and such that 
	\begin{equation}\label{eq:Hassan_latest_2}
		\sup_{0 \neq \Phi^*_{K, \perp} \in \{\Psi_K^*\}^{\perp}} \; \frac{\rm (IAA)}{\Vert \Phi^*_{K, \perp}\Vert_{\widehat{\mathcal{H}}^1}} =\sup_{0 \neq \Phi^*_{K, \perp} \in \{\Psi_K^*\}^{\perp}}  \frac{\left \langle \Phi^*_{K, \perp},  \left({H}- \mathcal{E}^*\right)e^{-(\mathcal{T}^*_K)^{\dagger}}\Phi_{\mathcal{W}} \right \rangle_{\widehat{\mathcal{H}}^{1} \times \widehat{\mathcal{H}}^{-1}}}{\Vert \Phi^*_{K, \perp}\Vert_{\widehat{\mathcal{H}}^1}}\geq \frac{\gamma_k}{\omega_k} \big\Vert e^{-(\mathcal{T}^*)^{\dagger}}\Phi_{\mathcal{W}}\big\Vert_{\widehat{\mathcal{H}}^{1}}.
	\end{equation}

	Combining now the estimates \eqref{eq:inv_new_0}-\eqref{eq:Hassan_latest_2} allows us to conclude that 
	\begin{align*}
		\sup_{0 \neq \bs_K \in \mathbb{V}_K}\frac{ \left \langle \bw_K, \mD \mathcal{f}(\bt^*)\bs_K\right \rangle_{\mathbb{V} \times \mathbb{V}^*}}{ \Vert \bs_K\Vert_{\mathbb{V}}} =& \sup_{0 \neq \Phi_{\mathcal{S}} \in \widetilde{\mathcal{V}}_K} \frac{\left \langle \mathcal{W}_K \Psi_0, \mathcal{A}(\bt^*) \mathcal{S}_K\Psi_0\right\rangle_{\widehat{\mathcal{H}}^1 \times \widehat{\mathcal{H}}^{-1}}}{ \Vert\Phi_{\mathcal{S}}\Vert_{\widehat{\mathcal{H}}^1}}\\
		\geq& \sup_{0 \neq \Phi^*_{K, \perp} \in \{\Psi_K^*\}^{\perp} }\frac{(\rm IAA) + (\rm IAB) + (\rm IB)}{ \Vert\mathbb{P}_0^{\perp} e^{-\mathcal{T}^*({\Pi_K})} \Phi^*_{K, \perp}\Vert_{\widehat{\mathcal{H}}^1}}\\[1em]
		\geq& \frac{\nicefrac{\gamma_K}{\omega_K}-\varepsilon_{1, K} -\varepsilon_{2, K}}{\left\Vert \mathbb{P}_0^{\perp} e^{-\mathcal{T}^*({\Pi_K})}\right\Vert_{\widehat{\mathcal{H}}^1} }\Vert e^{-(\mathcal{T}^*)^{\dagger}}\Phi_{\mathcal{W}}\big \Vert_{\widehat{\mathcal{H}}^{1}}\\[1em]
		\geq&\frac{ \nicefrac{\gamma_K}{\omega_K}-\varepsilon_{1, K} -\varepsilon_{2, K} }{ \Vert e^{(\mathcal{T}^*)^{\dagger}}\Vert_{\widehat{\mathcal{H}}^1 \to \widehat{\mathcal{H}}^1} \Vert \mathbb{P}_0^{\perp} e^{-\mathcal{T}^*({\Pi_K})}\Vert_{\widehat{\mathcal{H}}^1 \to \widehat{\mathcal{H}}^1}} \big\Vert\Phi_{\mathcal{W}} \big\Vert_{\widehat{\mathcal{H}}^{1}}.
	\end{align*}
	Defining the constants $\Theta_K:=  \Vert e^{(\mathcal{T}^*)^{\dagger}}\Vert_{\widehat{\mathcal{H}}^1 \to \widehat{\mathcal{H}}^1} \Vert \mathbb{P}_0^{\perp} e^{-\mathcal{T}^*({\Pi_K})}\Vert_{\widehat{\mathcal{H}}^1 \to \widehat{\mathcal{H}}^1}$,  and $\varepsilon_K:= \varepsilon_{1, K} +\varepsilon_{2, K}$ and taking the infimum over all coefficient vectors $\bw_K\in \mathbb{V}_K$ now yields the required estimate. The fact that the constant $\Theta_K$ is uniformly bounded above in $K$ is a consequence of the continuity properties of exponential cluster operators together with the density of the union of subspaces $\underset{K\geq N}{\cup} \mathbb{V}_K$ in $\mathbb{V}$. The fact that the inf-sup constant $\gamma_K$ is uniformly bounded below in $K$ is a consequence of the eigenvalue convergence $\mathcal{E}_K^* \to \mathcal{E}^*$ (see also the arguments in Remark \ref{rem:interp_const}). 
\end{proof}

Equipped with Lemma \ref{lem:Galerkin_inv_1}, we are now ready to state the final result of this section, which concerns the error between the ground state solution of the Full-CC equations in a finite basis \eqref{eq:Galerkin_truncated_CC_1} and the exact solutions of the continuous CC equations \eqref{eq:Galerkin_Full_CC}.

\begin{theorem}[Error Estimates for Full-CC in a Finite Basis]\label{thm:Galerkin_full_CC}~
	
	Let the coupled cluster function $\mathcal{f} \colon \mathbb{V} \rightarrow \mathbb{V}^* $ be defined through Definition \ref{def:CC_function}, for any $\bt \in \mathbb{V}$ let $\mD \mathcal{f}(\bt)$ denote the Fr\'echet derivative of the coupled cluster function as defined through Equation \eqref{eq:CC_Jac_1}, let $\bt^* \in \mathbb{V}$ denote a zero of the coupled cluster function corresponding to an intermediately normalised eigenfunction $\Psi^* \in \widehat{\mathcal{H}}^1$ of the electronic Hamiltonian ${H} \colon\widehat{\mathcal{H}}^1 \rightarrow \widehat{\mathcal{H}}^{-1}$ with isolated, non-degenerate ground state eigenvalue $\mathcal{E}^*$, let $\mathbb{V}_K \subset \mathbb{V}$ denote the Hilbert subspace of coefficients as defined through Definition \ref{def:coeff_space_finite} for $K\geq N$, let the Full-CI Hamiltonian ${H}_{K} \colon \mathcal{V}_K \rightarrow \mathcal{V}_K^*$ be defined according to Definition \ref{eq:FCI_Hamiltonian}, assume that there exists a sequence of simple eigenpairs $(\Psi_K^*, \mathcal{E}_K^*) \in \mathcal{V}_K \times \mathbb{R}$ of the Full-CI Hamiltonians $\{{H}_K\}_{K \geq N}$, i.e.,
	\begin{align*} 
		\forall \Phi_K \in \mathcal{V}_K \colon \qquad \langle \Phi_K, {H}_K \Psi_K^*\rangle_{\mathcal{V}_K \times \mathcal{V}_K^*}&= \mathcal{E}_K^*  \langle \Phi_K, \Psi_K^*\rangle_{\widehat{\mathcal{H}}^1 \times \widehat{\mathcal{H}}^{-1}} ~\text{ with }~ \mathcal{E}_K^* \text{ simple} \quad \text{and such that}\\[2em]
		\lim_{K \to \infty} \Vert \Psi^* - \Psi_K^*\Vert_{\widehat{\mathcal{H}}^1} =0, \quad &\lim_{K \to \infty}\vert \mathcal{E}^* - \mathcal{E}_K^* \vert =0, \nonumber
	\end{align*}
	and let the constants $\gamma_K, \Theta_K, \varepsilon_K, \omega_K >0$ be defined as in the proof of Lemma \ref{lem:Galerkin_inv_1}. 
	
	Then there exists $K_0 \in \mathbb{N}$ and a constant $\delta_0>0$ such that for all $K \geq K_0$ there exists a unique solution $\bt_{K}^*\in \mathbb{V}_K$ to the Full-CC equations in a finite basis \eqref{eq:Galerkin_truncated_CC_1} in the closed ball $\overline{\mathbb{B}_{\delta_K}(\bt^*) }$ where $\delta_K= \delta_0 \dfrac{\nicefrac{\gamma_K}{\omega_K}-\varepsilon_K}{\Theta_K}$. 
	
	Moreover, there exists a constant ${\rm C}>0$ such that $\forall K\geq K_0$ we have the quasi-optimality result
	\begin{equation}\label{eq:FullCC_optimality}
		\Vert \bt^*_{K} -\bt^*\Vert_{\mathbb{V}} \leq {\rm C}\frac{\Theta_K}{\nicefrac{\gamma_K}{\omega_K}-\varepsilon_K}\; \inf_{\bs_K \in \mathbb{V}_K}\Vert \bs_K - \bt^*\Vert_{\mathbb{V}},
	\end{equation}
	and we have the residual-based error estimate
	\begin{equation}\label{eq:FullCC_error}
		\Vert \bt^*_{K} -\bt^*\Vert_{\mathbb{V}} \leq 2\left \Vert \mD \mathcal{f}\left(\bt^*_{K}\right)^{-1}\right\Vert_{\mathbb{V}^* \to \mathbb{V}} \;\left\Vert \mathcal{f}\left(\bt^*_{K}\right)\right\Vert_{\mathbb{V}^*}.
	\end{equation}
\end{theorem}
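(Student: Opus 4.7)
The strategy is to cast the Full-CC equations in a finite basis as a Galerkin discretisation of the continuous equations \eqref{eq:Galerkin_Full_CC} and invoke the abstract framework for Galerkin approximations of non-linear problems (see, e.g., \cite{MR1470227, MR1213837}). The four ingredients at hand are: (i) the invertibility of $\mD\mathcal{f}(\bt^*) \colon \mathbb{V} \to \mathbb{V}^*$ with norm bound $\Theta/\gamma$ from Corollary \ref{cor:CC_der_inv}; (ii) the discrete inf--sup inequality of Lemma \ref{lem:Galerkin_inv_1}, yielding invertibility of the discretised Fr\'echet derivative $\mathcal{L}_K := \Pi_K^{\mathbb{V}^*}\mD\mathcal{f}(\bt^*)|_{\mathbb{V}_K} \colon \mathbb{V}_K \to \mathbb{V}_K^*$ with $\Vert \mathcal{L}_K^{-1}\Vert \leq \Theta_K/(\gamma_K/\omega_K - \varepsilon_K)$ uniformly for $K$ large; (iii) the density $\overline{\cup_{K} \mathbb{V}_K}^{\Vert\cdot\Vert_\mathbb{V}} = \mathbb{V}$ from Lemma \ref{lem:coeff_space_finite}, which supplies the consistency estimate $\Vert \Pi_K \bt^* - \bt^*\Vert_\mathbb{V} \to 0$; and (iv) the local Lipschitz continuity of $\mD\mathcal{f}$ from Corollary \ref{cor:CC_der_Lipschitz}.

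For the existence of $\bt_K^* \in \mathbb{V}_K$, let $\Pi_K^{\mathbb{V}^*} \colon \mathbb{V}^* \to \mathbb{V}_K^*$ denote the adjoint of the inclusion $\mathbb{V}_K \hookrightarrow \mathbb{V}$ and introduce the fixed-point iteration $G_K(\bt_K) := \bt_K - \mathcal{L}_K^{-1} \Pi_K^{\mathbb{V}^*} \mathcal{f}(\bt_K)$, whose fixed points are precisely the solutions of \eqref{eq:Galerkin_truncated_CC_1}. A standard calculation combining $\Vert \mD\mathcal{f}(\bt) - \mD\mathcal{f}(\bt^*)\Vert_{\mathbb{V} \to \mathbb{V}^*} \leq \mL_{\bt^*}(\delta) \Vert \bt - \bt^*\Vert_\mathbb{V}$ with the bound on $\Vert \mathcal{L}_K^{-1}\Vert$ shows that $G_K$ is a contraction of constant $\tfrac{1}{2}$ on $\overline{\mB_{\delta_K}(\Pi_K\bt^*)}$ as soon as $\delta_K \lesssim (\gamma_K/\omega_K - \varepsilon_K)/\Theta_K$. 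The consistency bound $\Vert G_K(\Pi_K\bt^*) - \Pi_K\bt^*\Vert_{\mathbb{V}} \lesssim (\Theta_K/(\gamma_K/\omega_K - \varepsilon_K))\Vert\Pi_K\bt^* - \bt^*\Vert_{\mathbb{V}}$ tends to zero by Lemma \ref{lem:coeff_space_finite}, so for $K \geq K_0$ it lies inside this ball. Banach's fixed-point theorem then produces a unique fixed point $\bt_K^*$ in the claimed closed ball $\overline{\mB_{\delta_K}(\bt^*)}$.

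For the quasi-optimality \eqref{eq:FullCC_optimality}, pick any $\bs_K \in \mathbb{V}_K$ and apply Lemma \ref{lem:Galerkin_inv_1} to $\bs_K - \bt_K^* \in \mathbb{V}_K$: using the Galerkin orthogonality $\langle \bw_K, \mathcal{f}(\bt_K^*)\rangle_{\mathbb{V} \times \mathbb{V}^*} = 0$ for all $\bw_K \in \mathbb{V}_K$ together with a first-order Taylor expansion of $\mathcal{f}$ around $\bt^*$, the numerator $\langle \bw_K, \mD\mathcal{f}(\bt^*)(\bs_K - \bt_K^*)\rangle$ is rewritten, up to a remainder bounded by $\tfrac{1}{2}\mL_{\bt^*}(\delta_K) \Vert \bt^* - \bt_K^*\Vert_{\mathbb{V}}^2$, as $\langle \bw_K, \mD\mathcal{f}(\bt^*)(\bs_K - \bt^*)\rangle$. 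Absorbing the quadratic remainder into the left-hand side (possible since $\delta_K$ is small) and taking the infimum over $\bs_K$ yields the claim together with the triangle inequality $\Vert \bt^* - \bt_K^*\Vert \leq \Vert\bt^* - \bs_K\Vert + \Vert \bs_K - \bt_K^*\Vert$. The residual estimate \eqref{eq:FullCC_error} then follows from Taylor's theorem: $\mathcal{f}(\bt_K^*) = \mD\mathcal{f}(\bt_K^*)(\bt_K^* - \bt^*) + \mathcal{O}(\Vert\bt_K^* - \bt^*\Vert_\mathbb{V}^2)$; since Corollary \ref{cor:CC_der_Lipschitz} ensures $\mD\mathcal{f}(\bt_K^*)$ is close in operator norm to $\mD\mathcal{f}(\bt^*)$ and thus remains invertible with uniformly bounded inverse, inverting and absorbing the quadratic remainder into the factor of $2$ produces \eqref{eq:FullCC_error}.

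The main obstacle is the bookkeeping of the $K$-dependent constants $\gamma_K, \Theta_K, \omega_K, \varepsilon_K$ supplied by Lemma \ref{lem:Galerkin_inv_1}: because the discrete inf--sup lower bound $(\gamma_K/\omega_K - \varepsilon_K)/\Theta_K$ is only known to be uniformly bounded below (not to grow with $K$), one must carefully tune $K_0$ so that the approximation error $\Vert \Pi_K\bt^* - \bt^*\Vert_\mathbb{V}$ is small relative to this fixed lower bound before the contraction argument applies, and so that the contraction radius $\delta_K$ is small enough for the quadratic remainders in both Taylor expansions to be absorbed. Once this calibration is arranged, the quasi-optimality and residual estimates reduce to essentially routine manipulations within the non-linear Galerkin framework.
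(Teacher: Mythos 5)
Your proposal is correct and follows essentially the same route as the paper: the paper proves this theorem by verifying exactly the two hypotheses you rely on---the discrete inf-sup bound of Lemma \ref{lem:Galerkin_inv_1} and the approximability supplied by Lemma \ref{lem:coeff_space_finite} together with the uniform bounds on $\gamma_K$ and $\Theta_K$---and then invoking the abstract Galerkin result \cite[Theorem 7.1]{MR1470227}. The only difference is that you re-derive that abstract result in place (Newton-type fixed-point contraction, quasi-optimality via Galerkin orthogonality plus a Taylor expansion, residual bound via Taylor inversion at $\bt_K^*$), whereas the paper cites it as a black box.
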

\begin{proof}
	As mentioned at the beginning of this section, the Full-Coupled Cluster equations in a finite basis \eqref{eq:Galerkin_truncated_CC_1} are simply a Galerkin discretisation of the continuous coupled cluster equations \eqref{eq:Galerkin_Full_CC}. Galerkin discretisations of non-linear equations have been widely studied in the literature on non-linear numerical analysis. In particular, the proof of Theorem \ref{thm:Galerkin_full_CC} is a direct application of~\cite[Theorem 7.1]{MR1470227}. We merely have to confirm that the assumptions of~\cite[Theorem 7.1]{MR1470227} hold, and this amounts to 
	
	\begin{enumerate}
		\item Establishing that the coupled cluster Fr\'echet derivative at $\bt^* \in \mathbb{V}$, which we denote $\mD \mathcal{f}(\bt^*)$, satisfies the discrete inf-sup condition
		\begin{align}\label{eq:inv_new_4}
			\exists \Upsilon_K >0\colon \qquad    \inf_{0\neq \bw_K \in \mathbb{V}_K} \sup_{0 \neq \bs_K \in \mathbb{V}_K}\frac{ \left \langle \bw_K, \mD \mathcal{f}(\bt^*)\bs_K\right \rangle_{\mathbb{V} \times \mathbb{V}^*}}{\Vert \bw_K\Vert_{\mathbb{V}} \Vert \bw_S\Vert_{\mathbb{V}}} \geq \Upsilon_K;
		\end{align}
		
		\item Establishing that the coefficient subspaces $\{\mathbb{V}_K\}_{K\geq N}$ satisfy the following approximability condition:
		\begin{align}\label{eq:inv_new_5}
			\lim_{K \to \infty} \;\inf_{0\neq \bs_K \in \mathbb{V}_K}\; \frac{1}{\Upsilon_K^2}\;\Vert \bt^* - \bs_K\Vert_{\mathbb{V}} =0.
		\end{align}
	\end{enumerate}

	The discrete inf-sup condition \eqref{eq:inv_new_4} has been established in Lemma \ref{lem:Galerkin_inv_1} with constant $\Upsilon_K = \dfrac{\nicefrac{\gamma_K}{\omega_K} - \varepsilon_K}{\Theta_K}$ which will obviously be positive for all $K$ sufficiently large since $\varepsilon_K \to 0$. It therefore remains to establish the approximability result \eqref{eq:inv_new_5} but this is a simple consequence of the previously exploited fact that the union of subspaces $\underset{K\geq N}{\cup} \mathbb{V}_K$ is dense in $\mathbb{V}$ together with the fact that, as shown in the proof of Lemma \ref{lem:Galerkin_inv_1}, the constant $\gamma_K$ is uniformly bounded below in $K$ and the constant $\Theta_K$ is uniformly bounded above in $K$.
\end{proof}

We conclude this section with several remarks.

\begin{remark}[Necessity of Assumptions of Lemma \ref{lem:Galerkin_inv_1} in Theorem \ref{thm:Galerkin_full_CC}]~
	
	Consider the setting of Lemma \ref{lem:Galerkin_inv_1} and Theorem \ref{thm:Galerkin_full_CC} and recall in particular the assumption that any isolated, simple eigenpair of the electronic Hamiltonian can be approximated by a sequence of simple eigenpairs of the Full-CI Hamiltonian as expressed through Equation \eqref{eq:new_hassan_0}. It is readily seen from the proof of Lemma \ref{lem:Galerkin_inv_1} that this assumption is not required if one considers invertibility of the CC Fréchet derivative $\mD \mathcal{f}(\bt^*)$ on $\mathbb{V}_K$ at $\bt^* = \bt^*_{\rm FCI}$. Indeed, in this special case, the discrete inf-sup condition for the shifted Full-CI Hamiltonian ${H}_K- \mathcal{E}^*$ on $\{\Psi^*_K\}^{\perp} \subset \mathcal{V}_K$ can be replaced with the coercivity of the shifted electronic Hamiltonian ${H}- \mathcal{E}^*_{\rm GS}$ on $\{\Psi^*_{\rm GS}\}^{\perp} \subset \widehat{\mathcal{H}}^1$. In this special case therefore, the proof of Lemma \ref{lem:Galerkin_inv_1} holds without any assumption beyond the simplicity of the ground state energy and the intermediate normalisability of the ground state wave-function. Thus we can deduce the asymptotic local well-posedness of the Full-CC equations in a finite-basis \eqref{eq:Galerkin_Full_CC} in a neighbourhood of $\bt_{\rm GS}^* \in \mathbb{V}$ according to Theorem \ref{thm:Galerkin_full_CC} without any additional assumptions.
\end{remark}

\vspace{2mm}

\begin{remark}[Comparing the Conclusions of Theorems \ref{thm:CC_truncated_full} and \ref{thm:Galerkin_full_CC}]~
	
	Consider the settings of Theorems \ref{thm:CC_truncated_full} and \ref{thm:Galerkin_full_CC}. Let us emphasise here that, in contrast to Theorem \ref{thm:CC_truncated_full}, Theorem \ref{thm:Galerkin_full_CC} does not explicitly require that the ground state wave-function in $\mathcal{V}_K$ of the Full-CI Hamiltonian be intermediately normalisable or that the associated ground state eigenvalue be  simple. Instead these properties are inherited (for $K$ large enough) from the properties of the exact electronic Hamiltonian ${H} \colon \widehat{\mathcal{H}}^1 \rightarrow \widehat{\mathcal{H}}^{-1}$ thanks to the density of the $N$-particle approximation spaces $\{\mathcal{V}_K\}_{K\geq N}$ in $\widehat{\mathcal{H}}^1$. More significantly, Theorem \ref{thm:Galerkin_full_CC} provides error estimates for the Full-CC equations in a finite basis with respect to the zeros of the exact (infinite-dimensional) coupled cluster function.
\end{remark}

\vspace{3mm}

\begin{remark}[Error Estimates for the Discrete Coupled Cluster Energies]~
	
	It is natural, at this point, to ask whether a priori and residual-based error estimates of the form \eqref{eq:FullCC_optimality} and \eqref{eq:FullCC_error} can be obtained for the Full-CC discrete energies. Quasi-optimal a priori error estimates for the discrete CC energies have been obtained by Schneider and Rohwedder \cite[Theorem 4.5]{MR3110488} using the dual weighted residual-based approach developed by Rannacher and coworkers \cite{soner2003adaptive}. The arguments of Schneider and Rohwedder can readily be seen to apply in our framework, and the proof and statement of \cite[Theorem 4.5]{MR3110488} can be thus be copied nearly word-for-word, the only difference being that the local monotonicity constant that appears in the a priori error estimate in  \cite[Theorem 4.5]{MR3110488} is replaced with the discrete inf-sup constant that we have derived in Lemma \ref{lem:Galerkin_inv_1}. The establishment of residual-based error estimates for the discrete CC energies, which requires considerably more work but can be achieved using the tools developed in this article, will be addressed in a forthcoming contribution. 
\end{remark}

	\section*{Acknowledgements} 
	
	The first author warmly thanks Reinhold Schneider, Antoine Levitt and Eric Canc\`es for useful comments and fruitful discussions and also gratefully acknowledges IPAM where portions of this work were completed (March-June 2022). This project has received funding from the European Research Council (ERC) under the European Union’s Horizon 2020 research and innovation program (grant agreement No. 810367). 
	
	\bibliography{refs.bib}

\end{document}